\definecolor{amethyst}{rgb}{0.6, 0.4, 0.8}
\def\wrtext#1{\relax\ifmmode{\leavevmode\hbox{#1}}\else{#1}\fi}
\def\abs#1{\left|#1\right|}
\def\begeq{\begin{equation}}
\def\endeq{\end{equation}}
\def\part#1{\frac{\partial}{\partial #1}}
\def\norm#1{||\,#1\,||}
\newcommand{\real}{\mbox{\bf R}}
\newcommand{\comp}{\mbox{\bf C}}
\newcommand{\nat}{\mbox{\bf N}}
\renewcommand{\Im}{\mbox{\rm Im\,}}
\renewcommand{\exp}{\mbox{\rm exp\,}}
\newtheorem{dref}{Definition}[section]
\newtheorem{lemma}[dref]{Lemma}
\newtheorem{theo}[dref]{Theorem}
\newtheorem{prop}[dref]{Proposition}
\newtheorem{coro}[dref]{Corollary}
\newenvironment{proof}{\vspace{.3cm}\noindent{{\em Proof:}}}{\hfill$\Box$}
\title{Semiclassical Gevrey operators in the complex domain}
\author{Michael \textsc{Hitrik} \footnote{Department of Mathematics, University of California, Los Angeles CA 90095-1555, USA, {\sf hitrik@math.ucla.edu}} \and Richard \textsc{Lascar} \footnote{JAD - UMR 7351, Universit\'e C\^ote d'Azur Parc Valrose 06108 Nice Cedex 02, France, {\sf richard.lascar@univ-cotedazur.fr}} \and Johannes \textsc{Sj\"ostrand}\footnote{IMB, Universit\'e de Bourgogne 9, Av. A. Savary, BP 47870
FR-21078 Dijon, France and UMR 5584 CNRS, {\sf johannes.sjostrand@u-bourgogne.fr}} \and Maher \textsc{Zerzeri} \footnote{LAGA - UMR7539 CNRS, Universit\'e Sorbonne Paris-Nord, 99, avenue J.-B. Cl\'ement F-93430 Villetaneuse, France, {\sf zerzeri@math.univ-paris13.fr}}}
\date{}
\begin{document}

\maketitle


\vspace*{0.5cm}
\noindent
{\bf Abstract}: We study semiclassical Gevrey pseudo\-dif\-ferential operators, acting on exponentially weighted spaces of entire holomorphic functions. The symbols of such operators are Gevrey functions defined on suitable I-Lagrangian submanifolds of the complexified phase space, which are extended almost holomorphically in the same Gevrey class, or in some larger space, to complex neighborhoods of these submanifolds. Using  almost holomorphic extensions, we obtain uniformly bounded realizations of such operators on a natural scale of exponentially weighted spaces of holomorphic functions for all Gevrey indices, with remainders that are optimally small, provided that the Gevrey index is $\leq 2$.

\par\vskip0.6cm
\noindent
{\small {\bf 2020 Mathematics Subject Classification.--} 30D60, 30E05, 32W05, 32W25, 35S99.}

\medskip
\noindent
{\small {\bf Key words and phrases.--} Gevrey pseudodifferential operator, almost holomorphic extension, FBI transform, Bargmann space, strictly plurisubharmonic weight function.}

\tableofcontents

\section{Introduction and statement of results}\label{int}
\setcounter{equation}{0}
Starting with the pioneering work~\cite{BoKr67}, the study of (pseudo)differential operators with Gevrey coefficients has had a long tradition in the PDE theory, see also~\cite{Rodino},~\cite{La88}, \cite{LaLa91}, \cite{LaLa97}. The work~\cite{La88}, in particular, develops the semiclassical Weyl calculus of pseudodifferential operators on $\real^n$, with symbols having Gevrey regularity in both the position and momentum variables. Now it is well known that the Weyl quantization assumes a particularly simple and convenient form when passing from the Schr\"odinger representation in the real setting to the FBI--Bargmann representation in the complex domain, conjugating the operators by a suitable globally defined metaplectic FBI transformation, see~\cite{Sj96},~\cite{HiSj15}. Once transported to the FBI transform side, pseudo\-differential operators in the Weyl quantization act on exponentially weighted spaces of entire holomorphic functions of the form
\begeq
\label{intro1}
H_{\Phi_0}(\comp^n) = {\rm Hol}(\comp^n) \cap L^2(\comp^n, e^{-2\Phi_0/h} L(dx)),
\endeq
where the weight function $\Phi_0$ is quadratic strictly plurisubharmonic, and $L(dx)$ is the Lebesgue measure on $\comp^n$. The purpose of the present work is to apply some of the $H_{\Phi}$--techniques to the study of semiclassical Weyl pseudodifferential operators with Gevrey symbols in the complex domain, showing a number of fairly general results concerning their symbolic and mapping properties. While the present work does not contain any applications to the study of propagation of singularities in Gevrey spaces, we expect the results established here to be useful in this respect and plan to return to these aspects in the near future. Let us now proceed to describe the precise assumptions and state the main results.

\bigskip
\noindent
Let $s>1$. The (global) Gevrey class ${\cal G}_b^s(\real^m)$ consists of all functions $u\in C^{\infty}(\real^m)$ such that there exist
$A>0$, $C>0$ such that for all $\alpha \in \nat^m$, we have
\begeq
\label{intr2}
\abs{\partial^{\alpha} u(x)} \leq A C^{\abs{\alpha}} (\alpha!)^s,\quad x\in \real^m.
\endeq
Let us also set ${\cal G}^s_0(\real^m) = {\cal G}_b^s(\real^m)\cap C^{\infty}_0(\real^m)$.

\medskip
\noindent
Associated to the quadratic form $\Phi_0$ in (\ref{intro1}) is the real linear subspace
\begeq
\label{intr3}
\Lambda_{\Phi_0} = \left\{\left(x,\frac{2}{i}\frac{\partial \Phi_0}{\partial x}(x)\right), \, x\in \comp^n\right\} \subset \comp^{2n} = \comp^n_x \times \comp^n_{\xi},
\endeq
which can be viewed as the image of the real phase space $\real^{2n}$ under a complex linear canonical transformation, see~\cite{Sj82},~\cite{Sj96},~\cite{HiSj15}. Identifying $\Lambda_{\Phi_0}$ linearly with $\comp^n_x$, via the projection map $\pi_x: \Lambda_{\Phi_0} \ni (x,\xi) \mapsto x\in \comp^n_x$, we may define the Gevrey spaces ${\cal G}^s_b(\Lambda_{\Phi_0})$, ${\cal G}^s_0(\Lambda_{\Phi_0})$. Given $a\in {\cal G}^s_b(\Lambda_{\Phi_0})$, for some $s>1$, and $u\in {\rm Hol}(\comp^n)$ such that $u(x) = {\cal O}_{h,N}(1) \langle{x\rangle}^{-N} e^{\Phi_0(x)/h}$ for all $N\in \nat$, let us introduce the semiclassical Weyl quantization of $a$ acting on $u$,
\begeq
\label{intr4}
a^w_{\Gamma}(x,hD_x) u(x) = \frac{1}{(2\pi h)^n}\int\!\!\!\!\int_{\Gamma(x)} e^{\frac{i}{h}(x-y)\cdot \theta} a\left(\frac{x+y}{2},\theta\right)u(y)\, dy\wedge d\theta.
\endeq
Here $0 < h \leq 1$ is the semiclassical parameter and $\Gamma(x)\subset \comp^{2n}_{y,\theta}$ is the natural integration contour given by
\begeq
\label{intr5}
\theta = \frac{2}{i} \frac{\partial \Phi_0}{\partial x}\left(\frac{x+y}{2}\right).
\endeq

\medskip
\noindent
In this work, we shall consider deformations of the standard weight $\Phi_0$, and to this end let $\Phi_1 \in C^{1,1}(\comp^n;\real)$ be such that
\begeq
\label{intr6}
\norm{\nabla^k(\Phi_1 - \Phi_0)}_{L^{\infty}({\bf C}^n)} \leq \frac{1}{C} h^{1 - \frac{1}{s}},\quad k = 0,1,2,
\endeq
for some $C>0$ sufficiently large. Our first main result is as follows --- see also Theorem \ref{main} and Theorem \ref{perturbationresult} below for a slightly more general statement.
\begin{theo}
\label{theo_main1}
Let $\displaystyle \omega = h^{1-\frac{1}{s}}$ and introduce the following $2n$--dimensional Lipschitz contours for $j=0,1$ and $x\in \comp^n$,
\begeq
\label{intr7}
\Gamma^{\Phi_j}_{\omega}(x): \quad \theta=\frac{2}{i}\frac{\partial \Phi_j}{\partial x}\left(\frac{x+y}{2}\right)
 + if_\omega(x-y),\quad y\in \comp^n,
\end{equation}
where
\begin{equation}
\label{intr8}
f_\omega({z})= \begin{cases}
                           \hskip10pt\overline{{z}}, \quad \, |{z}|\leq \omega,\\
                           {}\\
                           \displaystyle \omega\frac{\overline{{z}}}{|{z}|}, \quad |{z}| > \omega.
                          \end{cases}
\end{equation}
Let $a\in {\cal G}^s_b(\Lambda_{\Phi_0})$, for some $1 < s \leq 2$, and let $\widetilde{a}\in {\cal G}^s_b(\comp^{2n})$ be an almost holomorphic extension of $a$ such that ${\rm supp}\, \widetilde{a} \subset \Lambda_{\Phi_0} + B_{{\bf C}^{2n}}(0,C_0)$, for some $C_0>0$. We have for $j=0,1$,
\begin{multline}
\label{intr9}
a^w_{\Gamma}(x,hD_x) - \widetilde{a}^w_{\Gamma^{\Phi_j}_{\omega}}(x,hD_x) = {\cal O}(1)\, \exp\left(-\frac{1}{{\cal O}(1)}h^{-\frac{1}{s}}\right):\\
L^2(\comp^n, e^{-2\Phi_j/h} L(dx)) \rightarrow L^2(\comp^n, e^{-2\Phi_j/h} L(dx)),
\end{multline}
where the realization
$$
\widetilde{a}^w_{\Gamma^{\Phi_j}_{\omega}}(x,hD_x)u(x) = \frac{1}{(2\pi h)^n}\int\!\!\!\!\int_{\Gamma^{\Phi_j}_{\omega}(x)} e^{\frac{i}{h}(x-y)\cdot \theta} \widetilde{a}\left(\frac{x+y}{2},\theta\right)u(y)\, dy\wedge d\theta
$$
satisfies
\begeq
\label{intr10}
\widetilde{a}^w_{\Gamma^{\Phi_j}_{\omega}}(x,hD_x) = {\cal O}(1): H_{\Phi_j}(\comp^n) \rightarrow L^2(\comp^n, e^{-2\Phi_j/h} L(dx)).
\endeq
Here we have set, similarly to {\rm (\ref{intro1})},
$$
H_{\Phi_1}(\comp^n) = {\rm Hol}(\comp^n) \cap L^2(\comp^n, e^{-2\Phi_1/h} L(dx)).
$$
\end{theo}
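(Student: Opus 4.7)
My plan is to establish both conclusions via a Stokes-type contour deformation combined with the sharp Gevrey almost-holomorphic bound
\[
|\bar\partial\widetilde{a}(z,\zeta)|\leq C\exp\!\bigl(-\mathrm{dist}((z,\zeta),\Lambda_{\Phi_0})^{-1/(s-1)}/C\bigr),
\]
obtained by optimizing the standard Gevrey-$s$ estimate $|\bar\partial\widetilde{a}|\leq C_N\,\mathrm{dist}^{N-1}(N!)^s/C^N$ over $N\in\nat$. For the boundedness (\ref{intr10}), I parametrize $\Gamma^{\Phi_j}_\omega(x)$ by $y\in\comp^n$ and apply Schur's test to the weighted integral kernel of $\widetilde{a}^w_{\Gamma^{\Phi_j}_\omega}$ acting between $L^2(e^{-2\Phi_j/h})$-spaces. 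Exploiting that $\Phi_0$ is quadratic, so the midpoint identity
\[
\Phi_0(x)-\Phi_0(y)=2\,\mathrm{Re}\bigl(\partial\Phi_0((x+y)/2)\cdot(x-y)\bigr)
\]
is exact, and combining with (\ref{intr6}) controlling $\Phi_1-\Phi_0$ in $C^{1,1}$ at scale $\omega=h^{1-1/s}$, the real part of the weighted exponent $(i(x-y)\cdot\theta-\Phi_j(x)+\Phi_j(y))/h$ on the contour is bounded above by $-|x-y|^2/(Ch)$ when $|x-y|\leq\omega$ and by $-\omega|x-y|/(Ch)$ when $|x-y|>\omega$. Since $\widetilde{a}$ is uniformly bounded, Schur's test gives (\ref{intr10}); its tail contribution $h^{-n}\int_{|x-y|>\omega}\exp(-\omega|x-y|/(Ch))\,L(dy)\lesssim h^n\omega^{-2n}=h^{n(2/s-1)}$ is uniformly bounded in $h$ precisely when $s\leq 2$, which is where the Gevrey restriction in the theorem enters.

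For the comparison (\ref{intr9}), I apply Stokes' theorem to the holomorphic $2n$-form $e^{i(x-y)\cdot\theta/h}\widetilde{a}((x+y)/2,\theta)u(y)\,dy\wedge d\theta$ over the $(2n+1)$-chain $\theta_t=(2/i)\partial\Phi_0((x+y)/2)+itf_\omega(x-y)$, $t\in[0,1]$, composed for $j=1$ with the further chain $\Phi_\sigma=(1-\sigma)\Phi_0+\sigma\Phi_1$, $\sigma\in[0,1]$. Since the phase is entire in both $y$ and $\theta$ and $u\in H_{\Phi_j}$ is holomorphic in $y$, the surviving contributions in $d(\cdot)\wedge dy\wedge d\theta$ are precisely the $\bar\partial_z$ and $\bar\partial_\zeta$ derivatives of $\widetilde{a}$. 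On the chain the $(z,\zeta)$-argument of $\widetilde{a}$ lies within distance $C\omega$ of $\Lambda_{\Phi_0}$, and since $\omega^{-1/(s-1)}=h^{-1/s}$ the Gevrey bound yields the pointwise factor $|\bar\partial\widetilde{a}|\leq C\exp(-h^{-1/s}/C)$. Combining this uniform exponentially small factor with the Gaussian/exponential phase decay already obtained in the boundedness step, Schur's test produces (\ref{intr9}).

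The main obstacle is twofold. First, $f_\omega$ is only Lipschitz at $|z|=\omega$, so $\Gamma^{\Phi_j}_\omega$ fails to be smooth there and the application of Stokes must be justified --- either by splitting the $y$-integration at $|x-y|=\omega$ and accounting for the matching boundary terms, or by regularizing $f_\omega$ and passing to the limit. Second, the $C^{1,1}$-perturbation $\Phi_1-\Phi_0=\mathcal{O}(\omega)$ contributes to the midpoint expansion an error of size $\mathcal{O}(\omega|x-y|)$ globally and $\mathcal{O}(\omega|x-y|^2)$ locally, competing with the principal decay $-\omega|x-y|/(Ch)$ in the weighted phase; its absorption requires the constant in (\ref{intr6}) to be sufficiently large relative to the strict plurisubharmonicity constant of $\Phi_0$, so that the scale matching between (\ref{intr6}) and the cutoff $\omega$ in $f_\omega$ allows the Gaussian/exponential kernel bound to be preserved for $j=1$.
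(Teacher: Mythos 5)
Your overall strategy matches the paper's: contour deformation to $\Gamma^{\Phi_j}_\omega$ via Stokes' formula, kernel estimates, and Schur's test, with the Gevrey $\bar\partial$-bound providing the exponential gain. The boundedness argument for (\ref{intr10}) is essentially correct, and the Schur computation yielding $h^n\omega^{-2n}=h^{n(2/s-1)}$ and the cutoff $s\leq 2$ is exactly what the paper does.

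However, there is a genuine gap in your proof of the remainder estimate (\ref{intr9}). You claim that the uniform pointwise bound $|\bar\partial\widetilde{a}|\leq C\exp(-h^{-1/s}/C)$ on the $(2n+1)$-chain, combined with ``the Gaussian/exponential phase decay already obtained in the boundedness step,'' yields (\ref{intr9}) via Schur. This does not work. The phase decay you established in the boundedness step corresponds to the end contour $t=1$, whereas along the chain $\theta_t$ the weighted phase gives only $\exp(-tF_\omega(x-y)/h)$, which degenerates as $t\to 0$. If you combine that $t$-dependent phase with the \emph{uniform} $\bar\partial$-bound, the Schur $L^1$-norm becomes
\[
h^{-n}\,\exp\!\left(-\tfrac{1}{O(1)}h^{-1/s}\right)\int_0^1\!dt\int_{\comp^n} e^{-tF_\omega(z)/h}\,L(dz),
\]
and the $z$-integral in the region $|z|>\omega$ is of order $(h/(t\omega))^{2n}$, so the $t$-integral diverges as $\int_0^1 t^{-2n}\,dt$ for every $n\geq 1$. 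Equally, if you try to use the $t=1$ phase decay over the whole chain, you are using a lower bound on the phase as an upper bound, which is illegitimate.

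The missing idea is that one must retain the $t$-dependence of the $\bar\partial$-bound, namely $|\bar\partial\widetilde{a}|\leq C\exp(-C_1(t|f_\omega(x-y)|)^{-1/(s-1)})$, which degenerates (becomes very small) precisely as $t\to 0$, compensating for the weak phase decay there. One then bounds the $t$-integral by the \emph{supremum over $t$} of the product, and must analyze that supremum carefully: for $|x-y|\leq\omega$ it is controlled by the monotonicity of $g(\sigma)=C\sigma^2/h+C_1\sigma^{-1/(s-1)}$ on $(0,\sigma_{\min})$; for moderate $|x-y|$ the sup is at $t=1$; and for large $|x-y|$ the critical point of $t\mapsto \tfrac{C}{h}\omega t|x-y|+C_1(t\omega)^{-1/(s-1)}$ lies in $(0,1)$ and yields a bound $\exp(-h^{-1/s}|x-y|^{1/s}/O(1))$, whose spatial decay is essential for the Schur integral to converge. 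Without this case analysis (equations (\ref{psg.5.9.3}), (\ref{psg.5.9.6}), (\ref{psg.6.93}) in the paper), the argument does not close.
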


\medskip
\noindent
In the range of Gevrey indices $s>2$, it turns out that we have to accept remainders that are larger than the ones in (\ref{intr9}), when obtaining uniformly bounded realizations of the operator $a^w_{\Gamma}(x,hD_x)$ on the weighted spaces $H_{\Phi_0}(\comp^n)$, $H_{\Phi_1}(\comp^n)$. The following is the second main result of this work.
\begin{theo}
\label{theo_main2}
Let $a\in {\cal G}^s_b(\Lambda_{\Phi_0})$, for some $s>2$, and let $\widetilde{a}\in {\cal G}^s_b(\comp^{2n})$ be an almost holomorphic extension of $a$ as in Theorem {\rm \ref{theo_main1}}. Let $\Gamma^{\Phi_j}_{h^{1/2}}(x)$ be the $2n$--dimensional Lipschitz contour defined as in {\rm (\ref{intr7})}, {\rm (\ref{intr8})}, with $\omega$ replaced by $h^{1/2} \geq \omega$. We have for $j=0,1$,
\begin{multline}
\label{intr11}
a^w_{\Gamma}(x,hD_x) - \widetilde{a}^w_{\Gamma^{\Phi_j}_{h^{1/2}}}(x,hD_x) = {\cal O}(1)\, \exp\left(-\frac{1}{{\cal O}(1)}h^{-\frac{1}{2s-2}}\right):\\
L^2(\comp^n, e^{-2\Phi_j/h} L(dx)) \rightarrow L^2(\comp^n, e^{-2\Phi_j/h} L(dx)),
\end{multline}
where
\begeq
\label{intr12}
\widetilde{a}^w_{\Gamma^{\Phi_j}_{h^{1/2}}}(x,hD_x) = {\cal O}(1): H_{\Phi_j}(\comp^n) \rightarrow L^2(\comp^n, e^{-2\Phi_j/h} L(dx)).
\endeq
\end{theo}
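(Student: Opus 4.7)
The proof will follow the same architecture as Theorem~\ref{theo_main1}, the essential new ingredient being the replacement of the deformation scale $\omega = h^{1-1/s}$ by the larger $h^{1/2}$. For $s > 2$ the smaller scale is insufficient for $L^2$ purposes: on the strip $|x-y| \sim h^{1-1/s}$ one has $|x-y|^2/h = h^{1-2/s} \to 0$, so the Gaussian damping from the quadratic part of the phase degenerates. The value $|x-y| = h^{1/2}$ is precisely the threshold at which the factor $e^{-|x-y|^2/h}$ saturates to an $\mathcal{O}(1)$ quantity, and it is also the natural break-point beyond which one must pass from quadratic to linear damping supplied by the term $if_{h^{1/2}}(x-y)$ in the contour (\ref{intr7}).

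The first step is to interpolate between the canonical contour $\Gamma(x)$ given by (\ref{intr5}) and the deformed contour $\Gamma^{\Phi_j}_{h^{1/2}}(x)$ through a smooth one-parameter family $\{\Gamma_t(x)\}_{0 \leq t \leq 1}$ of $2n$--dimensional Lipschitz contours, with the $y=x$ base point fixed, and to invoke Stokes' theorem. Because $\widetilde{a}$ has support in a fixed tubular neighborhood of $\Lambda_{\Phi_0}$, there is no contribution at infinity, and the difference of the two quantizations is represented as an integral of $\bar{\partial}\widetilde{a}$ against the closed form $e^{i(x-y)\cdot\theta/h}\,dy\wedge d\theta$ over the $(2n+1)$--dimensional chain swept by the homotopy.

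The key pointwise estimate is the standard Gevrey $\bar{\partial}$ bound for almost holomorphic extensions,
$$
|\bar{\partial}\widetilde{a}(w)| \leq C \exp\Bigl(-\frac{1}{C}\dist(w,\Lambda_{\Phi_0})^{-1/(s-1)}\Bigr), \quad w \in \comp^{2n},
$$
which is available for any $\widetilde{a} \in {\cal G}^s_b(\comp^{2n})$ almost holomorphic on $\Lambda_{\Phi_0}$. On the homotopy contours the distance to $\Lambda_{\Phi_0}$ is of order $\min(|x-y|, h^{1/2})$, so in the dominant region $|x-y| \gtrsim h^{1/2}$ the factor above contributes precisely the remainder $\exp(-h^{-1/(2s-2)}/\mathcal{O}(1))$ appearing in (\ref{intr11}). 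Combining this with the damping $\exp(-|x-y|^2/h)$ for $|x-y| \leq h^{1/2}$ and $\exp(-|x-y|/(Ch^{1/2}))$ for $|x-y| > h^{1/2}$, a Schur test on the resulting kernel will yield the $L^2 \to L^2$ bound (\ref{intr11}).

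The boundedness (\ref{intr12}) will be obtained by a parallel Schur test on the kernel of $\widetilde{a}^w_{\Gamma^{\Phi_j}_{h^{1/2}}}$, using the $L^{\infty}$ bound on $\widetilde{a}$, the strict plurisubharmonicity estimate $\Phi_j((x+y)/2) - \tfrac{1}{2}(\Phi_j(x)+\Phi_j(y)) \leq -|x-y|^2/C$ (valid for both $j = 0, 1$ thanks to (\ref{intr6})), and the two-regime damping on $\Gamma^{\Phi_j}_{h^{1/2}}$. The main obstacle I anticipate is the careful matching of the $\bar{\partial}$ estimate with the phase damping in the transitional zone $|x-y| \sim h^{1/2}$, where the homotopy contours pass through points at which the $\bar{\partial}$ bound is weakest and the switch between quadratic and linear damping takes place; one must also verify that a sufficiently regular choice of homotopy is compatible with the merely $C^{1,1}$ regularity of $\Phi_1$, so that Stokes' theorem may be applied rigorously on the deformed contours.
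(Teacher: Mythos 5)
The architecture of your argument — a Stokes deformation along a family of contours, the Gevrey $\bar{\partial}$ estimate on the swept $(2n+1)$--chain, and a Schur test on the resulting kernel — is indeed the one used in the paper, and your identification of the two governing scales ($h^{1/2}$ as the crossover at which the Gaussian $e^{-|x-y|^2/h}$ saturates, and $h^{-1/(2s-2)}$ for the exponent in the remainder) is correct. For $j=0$ your one-step homotopy from $\Gamma$ to $\Gamma^{\Phi_0}_{h^{1/2}}$ does close: the mismatch between contour and weight vanishes, the phase exponent is $-tF_{h^{1/2}}(x-y)/h \le 0$, and the divergence of the $y$--integral as $t\to 0$ is defeated by the $\bar{\partial}$ factor $\exp(-C_1(t\min(|x-y|,h^{1/2}))^{-1/(s-1)})$. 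The bound (\ref{intr12}) also goes through once the correct damping estimate is in place.

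For $j=1$, however, there is a genuine gap in the single-step deformation. The canonical contour $\Gamma$ is adapted to the quadratic form $\Phi_0$, while the target $\Gamma^{\Phi_1}_{h^{1/2}}$ is adapted to $\Phi_1=\Phi_0+\psi$. Along any natural interpolation (say $\theta_t = \frac{2}{i}\partial_x\bigl((1-t)\Phi_0+t\Phi_1\bigr)(\tfrac{x+y}{2}) + itf_{h^{1/2}}(x-y)$), the exact identity (\ref{psg.1.1.1}) gives for the $\Phi_1$--effective kernel exponent
\begin{equation*}
\frac{1}{h}\Bigl[-\psi(x)+\psi(y) + t\,\bigl\langle\nabla\psi\bigl(\tfrac{x+y}{2}\bigr),x-y\bigr\rangle - tF_{h^{1/2}}(x-y)\Bigr].
\end{equation*}
Taylor's formula controls the $\psi$--terms only at $t=1$; for $t<1$ there survives a residual $(1-t)\langle\nabla\psi,x-y\rangle$ of size $(1-t)\|\nabla\psi\|_{L^\infty}|x-y| \asymp (1-t)\,h^{1-1/s}\,|x-y|$, which for $t \lesssim h^{1/2-1/s}$ and $|x-y|>h^{1/2}$ exceeds the deformation damping $tF_{h^{1/2}}(x-y) = th^{1/2}|x-y|$. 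In that regime the exponent can be positive and grow linearly in $|x-y|$, while the $\bar{\partial}$ factor on the swept chain depends only on $t$ and not on $|x-y|$ (since $f_{h^{1/2}}$ saturates at $h^{1/2}$); the Schur $L^1$ integral in $y$ therefore diverges. Your statement that the distance to $\Lambda_{\Phi_0}$ along the homotopy is $\sim \min(|x-y|,h^{1/2})$ omits the factor $t$ and the $\nabla\psi$--contribution, which is precisely what obscures this obstruction. This is why the paper never deforms $\Gamma$ to $\Gamma^{\Phi_1}_{h^{1/2}}$ in one stroke: it first uses Theorem \ref{main} together with Theorem \ref{perturbationresult} to reach $\Gamma^{\Phi_1}_\omega$ with remainder $\exp(-h^{-1/s}/\mathcal{O}(1))$, and only then, in Theorem \ref{contour2}, moves the Lipschitz parameter from $\omega$ to $h^{1/2}$ along (\ref{chw.3.4}) \emph{with $\Phi_1$ held fixed}, so that the damping $F_{\omega_t}(x-y)$ with $\omega_t \ge \omega > 0$ never degenerates as $t\to 0$ and always absorbs the $\psi$--mismatch thanks to the smallness hypothesis (\ref{intr6}).

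Separately, the estimate you invoke, $\Phi_j(\tfrac{x+y}{2}) - \tfrac12(\Phi_j(x)+\Phi_j(y)) \le -|x-y|^2/C$, is false for general strictly plurisubharmonic quadratic $\Phi_0$: for a quadratic form the left side equals $-\tfrac14\Phi_0(x-y)$ (modulo $\psi$), and strict plurisubharmonicity does not force $\Phi_0$ to be positive definite (e.g.\ $\Phi_0(x)=\tfrac12|x|^2-\mathrm{Re}\,x^2$ on $\comp$). What is actually used for (\ref{intr12}) is the exact quadratic identity (\ref{psg.1.1.1}) combined with the Taylor bounds (\ref{chw.2.5})--(\ref{chw.2.6}) for $\psi=\Phi_1-\Phi_0$, leading to (\ref{chw.3.12}).
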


\bigskip
\noindent
As is seen in the statements of Theorem \ref{theo_main1} and Theorem \ref{theo_main2}, a crucial role in this work is played
by the existence of a Gevrey almost holomorphic extension of a symbol $a\in {\cal G}^s_b(\Lambda_{\Phi_0})$, off the maximally totally real linear subspace $\Lambda_{\Phi_0}\subset \comp^{2n}$ to a complex neighborhood. As discussed in Section \ref{ahol}, the existence of such an extension may be obtained by solving a Borel problem in the Gevrey space, relying on the work~\cite{Ca61} by Carleson. Alternatively, the existence of an extension $\widetilde{a}\in C^{\infty}_b(\comp^{2n})$ of $a\in {\cal G}^s_b(\Lambda_{\Phi_0})$ such that
\begin{equation}
\label{intr.13}
\abs{\overline{\partial} \widetilde{a}(\rho)} \leq
\mathcal{O}(1)\, \exp\left(-\frac{1}{\mathcal{O}(1)}\textrm{dist}\big(\rho,\Lambda_{\Phi_0}\big)^{-\frac{1}{s-1}}\right),\quad \rho \in \comp^{2n},
\end{equation}
may be obtained by adapting a construction of Mather~\cite{Ma71}, see also~\cite{DiSj99}, working with the Fourier inversion formula with a cutoff.

\bigskip
\noindent
We would like to emphasize that, as explained in Section \ref{psg}, replacing the Lipschitz contour $\Gamma^{\Phi_0}_{\omega}(x)$ in Theorem \ref{theo_main1} by a contour of the form
$$
\theta=\frac{2}{i}\frac{\partial \Phi_0}{\partial x}\left(\frac{x+y}{2}\right) + \frac{i}{C}\overline{(x-y)}, \quad C>0,
$$
natural in the holomorphic category~\cite{Sj82},~\cite{Sj96},~\cite{HiSj15}, leads only to remainder estimates of the form
$$
R = {\cal O}(1)\, \exp\left(-\frac{1}{{\cal O}(1)}h^{-\frac{1}{2s-1}}\right): H_{\Phi_0}(\comp^n) \rightarrow L^2(\comp^n, e^{-2\Phi_0/h} L(dx)),
$$
and thus, working with a "mixed" contour such as $\Gamma^{\Phi_0}_{\omega}(x)$, staying closer to $\Lambda_{\Phi_0}$, seems essential when obtaining accurate remainder estimates. The price that we have to pay for working with the contours $\Gamma^{\Phi_j}_{\omega}(x)$ in (\ref{intr7}), (\ref{intr8}) is that the realizations of our Gevrey pseudodifferential operators along such contours are uniformly bounded in the range $1 < s \leq 2$ only. Closely related to this is the well known observation~\cite{La88},~\cite{LaLa97} that while the class of operators of the form $a^w_{\Gamma}(x,hD_x)$, with $a\in {\cal G}^s_b(\Lambda_{\Phi_0})$, is stable under the composition, the standard asymptotic Weyl calculus does not lead to some sharp control of the remainders in the semiclassical expansions --- see also (\ref{comp.25}) below.

\medskip
\noindent
Let us conclude the introduction by mentioning several works where the Gevrey regularity questions were studied, which have provided some of the motivation for the present paper. The recent work~\cite{GeJe20} gives a detailed treatment of the Gevrey regularity framework on arbitrary real analytic compact manifolds, motivated by the microlocal study of dynamical zeta functions and trace formulas for Anosov flows.
In the context of scattering theory, Gevrey regularity questions were considered in~\cite{Rouleux},~\cite{GaZw}. We would finally like to refer to~\cite{Sj80_1, Sj80_2, Sj81} and to~\cite{Le84},~\cite{LaLa91} for results on the propagation of analytic and Gevrey singularities for boundary value problems, see also~\cite{Sj90}. To the best of our knowledge, the question whether the result of~\cite{Le84} on the non-diffraction
of Gevrey 3 singularities holds true in the complement of a Gevrey 3 obstacle is still open, see~\cite{LaLa97}.

\medskip
\noindent
The plan of the paper is as follows. Section 2 is devoted to the discussion of almost holomorphic extensions of Gevrey symbols. We also establish an approximate uniqueness of almost holomorphic extensions satisfying (\ref{intr.13}). In Section 3 we study semiclassical Gevrey pseudodifferential operators acting on $H_{\Phi}$--spaces, establishing Theorem \ref{theo_main1} and Theorem \ref{theo_main2}, using almost holomorphic extensions and contour deformations. The section is concluded by the discussion of the composition of semiclassical Gevrey operators,  by the methods of phase symmetries and contour deformations.

\bigskip
\noindent
{\bf Acknowledgements}. M.H. is very grateful to Andr\'as Vasy for a helpful discussion.

\section{Gevrey spaces and almost holomorphic extensions}\label{ahol}
\setcounter{equation}{0}

In this section we shall recall some well known facts concerning almost holomorphic extensions. One can consult \cite{FuNeRaSc19} for a recent very general treatment with plenty of references, in particular to the pioneering work of Dyn'kin~\cite{Dy70}. See also~\cite{MeSj75},~\cite{Dy93},~\cite{LaLa97}, and \cite{DiSj99}.

\medskip
\noindent
Let $\Omega \subset \comp^d$ be open and put $\Omega_R=\Omega \cap \real^d$. A function ${\widetilde u}\in C^{\infty}(\Omega )$ is called an almost holomorphic extension of $u\in C^{\infty}(\Omega_R )$ if

\begin{itemize}
\item[(i)] ${\widetilde u}_{\big|{y}=0}=u$, and

\item[(ii)] $\displaystyle \overline{\partial}{\widetilde u}=
\sum_{j=1}^{d}\,{{\partial}}_{\overline{z}_j}{\widetilde u}\ d\overline{z}_j$ is flat on ${y}=0$.
\end{itemize}
Here, we identify $\comp^d$ with $\real^{2d}$ in the usual way: $\comp^d\ni z=x+iy$, $(x,y)\in \real^{2d}$. Recall also that for $j=1,\ldots,d$, we have $\partial_{z_j}=\frac{1}{2}(\partial_{x_j}-i\partial_{y_j})$, ${\partial}_{\overline{z}_j}=\frac{1}{2}(\partial_{x_j}+i\partial_{y_j})$, $\partial_{z_j}-{\partial}_{\overline{z}_j}=-i\partial_{y_j}$ and
$\partial_{z_j}+{\partial}_{\overline{z}_j}=\partial_{x_j}$. If ${\widetilde u}$ is an almost holomorphic extension of $u$, the conditions
(i) and (ii) above determine the asymptotic (Taylor) expansion of ${\widetilde u}$ on ${y}=0$, i.e.,
\begin{equation*}
{\widetilde u}({x}+i{y})=\sum_{|\alpha|<N} \frac{i^{|\alpha|}}{\alpha !}
 u^{(\alpha)}({x}) {y}^\alpha +\mathcal{O}(|{y}|^N),
\end{equation*}
locally uniformly on $\mathrm{neigh\,}(\Omega_R,\Omega )$ for every $N\geq 1$. Here we write ``$\mathrm{neigh\,}(A,B)$'' as an abbreviation for
``some neighborhood of $A$ in $B$'' and, $u^{(\alpha)}=\partial^\alpha u$.

\medskip
\noindent
The function ${\widetilde u}$ is an almost holomorphic extension of $u$ if and only if ${\widetilde u}$ solves the Borel problem:
\begin{equation}\label{ahol.1}
\left(\partial_{{y}}^{\alpha}{\widetilde u}\right)_{\big|{y}=0}=i^{|\alpha|} u^{(\alpha)}, \qquad {\textrm{for all}}\quad \alpha\in {\bf{N}}^d.
\end{equation}
Indeed, we have already checked the necessity of \eqref{ahol.1}, and if \eqref{ahol.1}
is satisfied by ${\widetilde u}\in C^{\infty}(\comp^d)$ then
${\widetilde u}_{\big|{y}=0}=u$ and more generally
$\left(\partial_{{x}}^{\gamma}\partial_{{y}}^{\beta}{\widetilde u}\right)_{\big|{y}=0}
=i^{|\beta|} u^{(\beta+\gamma)}$ for all $\beta,\gamma\in \nat^d$. It follows that ${\partial}_{\overline{z}_j}{\widetilde u}$ is flat on ${y}=0$ as
$$
\left(\partial_{{x}}^{\gamma}\partial_{{y}}^{\beta}{{\partial}_{\overline{z}_j}}{\widetilde u}\right)_{\big|{y}=0}
=\frac{i^{|\beta|}}{2}\big(u^{(\beta+\gamma + e_j)}
+i^2 u^{(\beta+\gamma + e_j)}\big)=0.
$$
Here ${e_j}$ denotes the multi-index $(\delta_k^j)_{1\leq k\leq d}\in\nat^d$, where $\delta_k^j$ is the Kronecker delta.

\bigskip
\noindent
Let $\mathcal{U}$ be an open subset of $\real^d$, and let $s\geq 1$. The Gevrey space $\mathcal{G}^s(\mathcal{U})$ is the space of functions $u\in C^\infty(\mathcal{U})$ such that for every $K\Subset \mathcal{U}$, there exist $A>0$, $C>0$ such that
\begin{equation}
\label{ahol.G.1}
\big\vert{\partial^\alpha u(x)}\vert \leq A\, C^{|\alpha|} (\alpha!)^s,
\hbox{ for all }x\in K,\ \alpha\in{\bf{N}}^d.
\end{equation}
The class $\mathcal{G}^1(\mathcal{U})$ is the space of real analytic functions in $\mathcal{U}$, while for $s>1$, we have
$\mathcal{G}^s_0(\mathcal{U}):=\mathcal{G}^s(\mathcal{U}) \cap C^\infty_0(\mathcal{U}) \ne \{0 \}$, see~\cite[Theorem 1.3.5]{Ho85}. We also let ${\cal G}^s_b(\real^d) \subset {\cal G}^s(\real^d)$ be the space of functions $u\in C^{\infty}(\real^d)$ satisfying the Gevrey condition (\ref{ahol.G.1}) uniformly on all of $\real^d$: we have $u\in {\cal G}^s_b(\real^d)$ precisely when there exist $A>0$, $C>0$ such that for all $\alpha \in \nat^d$, we have
\begin{equation}
\label{ahol.G.1.1}
\abs{\partial^{\alpha} u(x)} \leq A C^{\abs{\alpha}} (\alpha!)^s,\quad x\in \real^d.
\end{equation}

\subsection{Almost holomorphic extensions via a result of Carleson}\label{Carl}
We assume now that $u\in {\mathcal{G}}^s_0(\real^d)$ with $s>1$. In view of the above remark, one has to solve
the Borel problem \eqref{ahol.1} in ${\mathcal{G}}^s(\real^{2d})$ in order to obtain an almost holomorphic extension ${\widetilde{u}}$ of $u$
in the same Gevrey class.

\medskip
\noindent
This may be achieved through the Carleson theorem with a suitable choice of the weight function, see \cite[Theorem 2 and Example 2]{Ca61}. This theorem is a corollary of a more general result which has allowed to resolve the issue known as  a ``universal moment problem''. See
\cite[Theorem 1]{Ca61}. We get
  \begin{prop}\label{Carl1}
Let $u\in {\cal G}^s_0(\real^d)$. Then $u$ has an almost
holomorphic extension $\widetilde{u}\in {\cal G}^s(\comp^d)$.
\end{prop}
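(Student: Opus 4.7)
The plan is to invoke Carleson's theorem on the Borel problem from \cite{Ca61}, tuned to the Gevrey scale. As already observed in the paragraph preceding the statement, finding an almost holomorphic extension $\widetilde{u}\in \mathcal{G}^s(\comp^d)$ amounts to solving the Borel problem \eqref{ahol.1} with $\widetilde{u}$ staying in $\mathcal{G}^s(\real^{2d})$; so the only task is to produce a Gevrey $s$ solution of the system $(\partial_y^\alpha \widetilde{u})|_{y=0} = i^{|\alpha|} u^{(\alpha)}$, for all $\alpha \in \nat^d$, out of the data $(u^{(\alpha)})_{\alpha\in\nat^d}$.

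The natural candidate is the cutoff formal Taylor sum in the $y$ variable,
$$
\widetilde{u}(x+iy) = \sum_{\alpha \in \nat^d} \chi(\lambda_\alpha y)\, \frac{(iy)^\alpha}{\alpha !}\, u^{(\alpha)}(x),
$$
where $\chi \in C_0^\infty(\real^d)$ equals $1$ near the origin and is supported in the unit ball, and the scales $\lambda_\alpha > 0$ are chosen to grow to infinity at a rate dictated by Carleson's theorem. Applied to the Denjoy--Carleman weight $M_k = (k!)^s$, which is non-quasi-analytic precisely because $s>1$ (so that $\sum_k M_k/M_{k+1} < \infty$), the construction of \cite[Theorem~2 and Example~2]{Ca61} gives an admissible choice along the lines of $\lambda_\alpha \asymp |\alpha|^{s-1}/C$ for $|\alpha|$ large. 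With this choice, $\chi(\lambda_\alpha y)$ vanishes once $|y|>1/\lambda_\alpha$, and each term is pointwise controlled by $A\,(\alpha!)^{s-1} C^{|\alpha|} \lambda_\alpha^{-|\alpha|}$, which decays rapidly in $|\alpha|$.

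The verification then splits into two steps. First, the series and all of its term-by-term derivatives converge locally uniformly, so $\widetilde{u}\in C^\infty(\comp^d)$; evaluating the resulting sum at $y=0$ yields the Borel conditions, and the flatness of $\overline{\partial}\widetilde{u}$ on $\{y=0\}$ is then automatic by the observation made right after \eqref{ahol.1}. Second, and this is where the real work lies, one has to establish a uniform Gevrey bound
$$
|\partial^\delta \widetilde{u}(z)| \leq A'\, (C')^{|\delta|}\,(\delta !)^s,\quad \delta \in \nat^{2d},\ z\in \comp^d.
$$
This is done by expanding $\partial_x^\beta \partial_y^\gamma \widetilde{u}$ via Leibniz applied to $\chi(\lambda_\alpha y)(iy)^\alpha$, estimating $u^{(\alpha+\beta)}$ by the Gevrey hypothesis on $u$, and summing thanks to the decay built into $\lambda_\alpha$. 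The main obstacle is the combinatorial bookkeeping in this last step: the Leibniz expansion in $\delta$ must be reorganised together with the index sum over $\alpha$ so that a single pair $(A',C')$ dominates every derivative uniformly in $z$. It is precisely here that the non-quasi-analyticity of the Gevrey scale $s>1$, and the sharpness of Carleson's choice of $\lambda_\alpha$, enter in an essential way.
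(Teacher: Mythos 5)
Your strategy coincides with the paper's: reduce the proposition to the surjectivity of the Borel map in the Gevrey class, and cite Carleson~\cite{Ca61} for that surjectivity. The paper supplies no argument beyond this citation, so at that level there is agreement, and the reference is the right one.

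The explicit construction you interpolate between the statement and the citation is, however, not adequate, and the reason is worth recording because it is not a matter of bookkeeping. With $\widetilde u(x+iy)=\sum_\alpha\chi(\lambda_\alpha y)\,\frac{(iy)^\alpha}{\alpha!}\,u^{(\alpha)}(x)$, a \emph{fixed} cutoff $\chi\in C_0^\infty(\real^d)$, and $\lambda_\alpha\asymp|\alpha|^{s-1}$, one does recover the Borel data and the $\overline\partial$-decay~(\ref{ahol.3}), but one does \emph{not} obtain $\widetilde u\in\mathcal G^s(\comp^d)$. Push all $\gamma$ derivatives onto the cutoffs: this produces $\lambda_\alpha^{|\gamma|}(\partial^\gamma\chi)(\lambda_\alpha y)$, supported where $\lambda_\alpha\asymp|y|^{-1}$ and $|\alpha|\asymp|y|^{-1/(s-1)}$; combining with $|y^\alpha|\,|u^{(\alpha)}|/\alpha!\lesssim C^{|\alpha|}e^{-(s-1)|\alpha|}$ on that set and optimizing over $y$, this contribution is of size $\|\partial^\gamma\chi\|_\infty\,C^{|\gamma|}(\gamma!)^{s-1}$. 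Since no compactly supported cutoff can be real analytic, the best available bound is $\|\partial^\gamma\chi\|_\infty\lesssim C_\chi^{|\gamma|}(\gamma!)^{s'}$ with some $s'>1$, and the construction therefore gives only $\widetilde u\in\mathcal G^{s+s'-1}$, that is $\widetilde u\in\bigcap_{\sigma>s}\mathcal G^\sigma$, a strictly larger class than $\mathcal G^s$. The scale $\lambda_\alpha\asymp|\alpha|^{s-1}$ is already forced by the $\overline\partial$-estimate, so the subtlety in~\cite{Ca61} does not lie in "the sharpness of Carleson's choice of $\lambda_\alpha$", as you suggest; it lies in replacing the single fixed cutoff by an $\alpha$-dependent family (or some other device) that removes the loss factor $\|\partial^\gamma\chi\|_\infty$. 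Your reduction and your citation are correct and identical to the paper's, but the "natural candidate" you describe should be presented as a motivating heuristic rather than as the construction that actually underwrites Carleson's theorem.
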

Clearly, $\widetilde{u}$ vanishes to infinite order on $\real^d\setminus \mathrm{supp\,}(u)$. Let $\chi \in {\cal G}_0^s(\comp^d)$ be equal to $1$ near $\mathrm{supp\,}(u)$. Then $\chi \widetilde{u}\in {\cal G}_0^s(\comp^d)$ is also an almost holomorphic extension of
$u$. This gives the following variant:
\begin{prop}\label{Carl2}
Let $\Omega _R$, $\Omega $ be as above and let $u\in {\cal G}_0^s(\Omega _R)$. Then $u$ has an almost holomorphic extension
$\widetilde{u}\in {\cal G}_0^s(\Omega )$.
\end{prop}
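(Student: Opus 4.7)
The plan is to reduce Proposition \ref{Carl2} to Proposition \ref{Carl1} by a cutoff argument, exactly as hinted in the preceding paragraph. First, extend $u\in {\cal G}_0^s(\Omega_R)$ by zero to all of $\real^d$; the resulting function lies in ${\cal G}_0^s(\real^d)$ since its support is a compact subset of $\Omega_R$. Applying Proposition \ref{Carl1} to this extension yields an almost holomorphic extension $\widetilde{u}_0\in {\cal G}^s(\comp^d)$ with $\widetilde{u}_0|_{y=0}=u$.

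Next, I would record the flatness observation stated just before Proposition \ref{Carl2}: $\widetilde{u}_0$ vanishes to infinite order at every point of $\real^d \setminus \supp u$. Indeed, by the Taylor expansion characterization of an almost holomorphic extension, we have for each $N$
\begin{equation*}
\widetilde{u}_0(x+iy) = \sum_{|\alpha|<N}\frac{i^{|\alpha|}}{\alpha!}\,\partial^{\alpha}u(x)\,y^{\alpha} + \mathcal{O}(|y|^N),
\end{equation*}
locally uniformly, and for $x\notin \supp u$ all the Taylor coefficients $\partial^{\alpha}u(x)$ vanish.

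Now choose a cutoff $\chi \in {\cal G}_0^s(\Omega)$ with $\chi\equiv 1$ in some complex neighborhood $V$ of $\supp u$; such a Gevrey cutoff exists for $s>1$ by \cite[Theorem 1.3.5]{Ho85} since $\supp u$ is compact in $\Omega_R\subset \Omega$. I then set
\begin{equation*}
\widetilde{u} := \chi\, \widetilde{u}_0 \in {\cal G}_0^s(\Omega),
\end{equation*}
the product lying in ${\cal G}_0^s(\Omega)$ because the Gevrey class ${\cal G}^s$ is closed under pointwise multiplication (standard Leibniz bound using $(\alpha!)^s (\beta!)^s \leq (\alpha+\beta)!^s$) and the support is now contained in $\supp\chi \Subset \Omega$.

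It remains to verify the two defining properties of an almost holomorphic extension. The condition $\widetilde{u}|_{y=0}=u$ is immediate, since on $\{y=0\}$ the function $\widetilde{u}_0$ equals $u$, which is supported where $\chi\equiv 1$. For the flatness of $\overline{\partial}\widetilde{u}$ on $\{y=0\}$, I use the Leibniz rule
\begin{equation*}
\overline{\partial}\widetilde{u} = (\overline{\partial}\chi)\,\widetilde{u}_0 + \chi\,\overline{\partial}\widetilde{u}_0.
\end{equation*}
The second term is flat on $\{y=0\}$ because $\widetilde{u}_0$ is an almost holomorphic extension. For the first term, the support of $\overline{\partial}\chi$ is disjoint from the neighborhood $V$ of $\supp u$, so at any point $x\in \real^d\cap \supp(\overline{\partial}\chi)$ we have $x\notin \supp u$; by the flatness observation above, $\widetilde{u}_0$ vanishes to infinite order at such $x$, hence so does $(\overline{\partial}\chi)\widetilde{u}_0$. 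This completes the verification.

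The only even mildly delicate point is producing the Gevrey cutoff $\chi$ with the required properties and checking that multiplication by $\chi$ preserves the Gevrey class with quantitative control, but both are classical for $s>1$; everything else is a direct transcription of the hint given in the excerpt.
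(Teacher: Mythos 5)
Your proof is correct and reproduces exactly the argument the paper sketches immediately before the statement of Proposition \ref{Carl2}: extend $u$ by zero, apply Proposition \ref{Carl1}, note the vanishing to infinite order of $\widetilde{u}_0$ on $\real^d\setminus\supp u$, and then multiply by a Gevrey cutoff $\chi$ supported in $\Omega$ and equal to $1$ near $\supp u$. You have simply spelled out the verification (the Leibniz split of $\overline{\partial}(\chi\widetilde{u}_0)$ and the stability of ${\cal G}^s$ under products) that the paper leaves implicit.
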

With the help of Gevrey cutoffs and partitions of unity, we get the following variant:
\begin{prop}\label{Carl3}
Let $\Omega _R$, $\Omega $ be as above and let $u\in {\cal G}^s(\Omega _R)$. Then $u$ has an almost holomorphic extension
$\widetilde{u}\in {\cal G}^s(\Omega )$.
\end{prop}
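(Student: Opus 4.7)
The plan is to reduce to the compactly supported case (Proposition~\ref{Carl2}) via a Gevrey partition of unity on $\Omega_R$, and then to glue the local extensions together with Gevrey cutoffs in the complex variables, chosen so that the resulting sum is locally finite throughout $\Omega$.

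First I would pick a locally finite open cover $\{\omega_j\}_j$ of $\Omega_R$ with each $\omega_j\Subset \Omega_R$, together with a subordinate Gevrey partition of unity $\chi_j\in \mathcal{G}^s_0(\Omega_R)$, $\mathrm{supp}\,\chi_j\subset \omega_j$, $\sum_j\chi_j\equiv 1$ on $\Omega_R$; for $s>1$ such a partition of unity is produced from the Gevrey cutoffs of \cite[Theorem 1.3.5]{Ho85} in the standard way. Proposition~\ref{Carl2} then yields almost holomorphic extensions $\widetilde{u_j}\in \mathcal{G}^s_0(\Omega)$ of $\chi_j u$. Next I would choose Gevrey cutoffs $\eta_j\in \mathcal{G}^s_0(\Omega)$ with $\eta_j\equiv 1$ on an open neighborhood (in $\Omega$) of $\mathrm{supp}\,\chi_j$, and such that the family $\{\mathrm{supp}\,\eta_j\}_j$ is locally finite in $\Omega$; this is arranged by placing $\mathrm{supp}\,\eta_j$ inside a sufficiently thin complex tubular neighborhood of $\mathrm{supp}\,\chi_j$ and shrinking its thickness with $j$. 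Set
\[
\widetilde{u}:=\sum_j \eta_j\,\widetilde{u_j}.
\]
This is a locally finite sum on $\Omega$, hence well-defined and lying in $\mathcal{G}^s(\Omega)$, since on each compact only finitely many terms contribute and each term inherits a Gevrey bound from Proposition~\ref{Carl2}.

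It remains to verify (i) and (ii). For (i), at any point of $\Omega_R$ we have $\eta_j\equiv 1$ near $\mathrm{supp}(\chi_j u)$, so $(\eta_j\widetilde{u_j})|_{\Omega_R}=\chi_j u$, and summing gives $\widetilde{u}|_{\Omega_R}=u$. For (ii), Leibniz gives $\overline{\partial}(\eta_j\widetilde{u_j})=(\overline{\partial}\eta_j)\,\widetilde{u_j}+\eta_j\,\overline{\partial}\widetilde{u_j}$, whose second summand is flat on $\Omega_R$ since $\widetilde{u_j}$ is almost holomorphic. The first summand is also flat on $\Omega_R$: let $U_j\subset \Omega$ denote the open set where $\eta_j\equiv 1$; then $\overline{\partial}\eta_j$ vanishes on $U_j$, and $U_j$ contains an $\Omega$-neighborhood of $\mathrm{supp}\,\chi_j$, while at any $x_0\in \Omega_R\setminus U_j$ the function $\chi_j u$ vanishes on a full $\real^d$-neighborhood of $x_0$, so by \eqref{ahol.1} the entire Taylor series of $\widetilde{u_j}$ at $x_0$ is zero, and $\widetilde{u_j}$ is flat there.

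The main obstacle, and really the only point beyond Proposition~\ref{Carl2}, is arranging the complex cutoffs $\eta_j$ so that their supports are locally finite in $\Omega$ rather than merely in $\Omega_R$; this is a paracompactness argument combined with a controlled shrinking of the tubular thickness in the imaginary direction, which is needed to make the reassembled sum converge and to keep the Gevrey estimates uniform on compact subsets of $\Omega$.
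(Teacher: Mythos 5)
Your proof is correct and fills in exactly the construction the paper indicates (the paper only says ``with the help of Gevrey cutoffs and partitions of unity''): a Gevrey partition of unity $\{\chi_j\}$ on $\Omega_R$, Proposition~\ref{Carl2} applied to each $\chi_j u$, and a locally finite family of complex Gevrey cutoffs $\{\eta_j\}$ equal to $1$ near each $\mathrm{supp}\,\chi_j$, with tube thicknesses shrunk in $j$ so that the supports are locally finite in $\Omega$ and not merely over $\Omega_R$. The flatness check for $\overline{\partial}(\eta_j\widetilde{u_j})$ via \eqref{ahol.1} is the right observation and is carried out correctly.
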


\medskip
\noindent
In the case when $u\in {\cal G}^s_b(\real^d)$, we obtain that there exists an almost holomorphic extension
$\widetilde{u}\in {\cal G}^s_b(\comp^d)$, which is supported in a bounded tubular neighborhood of $\real^d \subset \comp^d$.

\bigskip
\noindent
Let $\widetilde{u}\in {\cal G}_0^s(\comp^d)$ be an almost holomorphic extension of $u \in {\cal G}_0^s(\real^d)$. In view of Taylor's formula, there exist $C>0,\ A>0$ such that
\begin{equation}
\label{ahol.2}
\Big\vert\big(\overline{\partial}\widetilde{u}\big)({z})\Big\vert \leq
C A^N \big(N!)^{s-1} \big|\mathrm{Im}({z})\big|^N, \qquad
{\textrm{for all}}\quad N\in \nat.
\end{equation}
Here, $\mathrm{Im}({z})$ denotes the imaginary part of $z\in \comp^d$. Taking the infimum over $N$ one gets
\begin{equation}
\label{ahol.3}
\Big\vert\big(\overline{\partial}\widetilde{u}\big)({z})\Big\vert={\mathcal{O}(1)}\,
\exp\left(-\frac{1}{\mathcal{O}(1)}\big|\mathrm{Im}({z})\big|^{-\frac{1}{s-1}}\right),
\end{equation}
where ${\cal O}(1)$ denotes a number whose modulus is bounded by some large positive constant, positive when appearing in a denominator.

\medskip
\noindent
Now if $\widetilde{u}_1, \widetilde{u}_2$ are two almost holomorphic extensions of $u$ as above, we have in view of
\eqref{ahol.1} and of Taylor's formula $\big\vert \big(\widetilde{u}_1-\widetilde{u}_2\big)({z})\big\vert
\leq C A^N \big(N\,!)^{s-1} \big|\mathrm{Im}({z})\big|^N$, for all $N\in \nat$. Taking the infimum over $N$, we get,
\begin{equation}
\label{ahol.4}
\Big\vert\big(\widetilde{u}_1-\widetilde{u}_2\big)({z})\Big\vert = {\mathcal{O}(1)}
\exp\left(-\frac{1}{\mathcal{O}(1)}\big|\mathrm{Im}({z})\big|^{-\frac{1}{s-1}}\right).
\end{equation}
In the analogous context of Proposition \ref{Carl3}, we get (\ref{ahol.4}) locally uniformly on $\Omega $.

\subsection{Fourier transforms}\label{Ftf}
The Fourier transform of a function $u\in {\cal S}(\real^d)$ is given by
$$
\mathcal{F}u(\xi )=
\widehat{u}(\xi)=\int_{{\bf R}^d}e^{-i y\cdot \xi }u(y)dy, \hbox{ where
} y\cdot \xi =\sum_{j=1}^d y_j\xi _j,
$$
and we have the Fourier inversion formula,
$$
u(x)=(2\pi)^{-d}\int_{{\bf{R}}^d} e^{i x\cdot \xi}\widehat{u}(\xi)\,d\xi.
$$

\medskip
\noindent
If $s>1$ and $u\in\mathcal{G}^s_0(\real^d)$, there exists a constant $C>0$
such that
\begin{equation}\label{ahol.G.2}
\big\vert{\widehat{u}(\xi)}\big\vert\leq C\,\exp\left(-\frac{1}{C}\vert{\xi}\vert^{\frac{1}{s}}\right),
{\hbox{ for every }} \xi \in \real^d.
\end{equation}
Indeed, if $N\in 2\nat$, we have
${\cal F}\left( (1-\Delta)^{\frac{N}{2}}u \right)
(\xi)= (1+|\xi|^2)^{\frac{N}{2}}\widehat{u}(\xi)$, and it follows that
$\big\vert{\widehat{u}(\xi)}\big\vert\leq A^{N+1}(1+\vert{\xi}\vert^2)^{-\frac{N}{2}}(N!)^s$.
It then suffices to choose $N\sim \left(\frac{\vert{\xi}\vert}{A}\right)^{\frac{1}{s}}$
and apply Stirling's formula.

\medskip
\noindent
Conversely, from the Fourier inversion formula, we see that if $u\in {\cal S}(\real^d)$ and \eqref{ahol.G.2} holds, then $u\in\mathcal{G}^s_{{b}}(\real^d)$.

\subsection{Almost holomorphic extensions in the spirit of Mather}
\label{Math}
Let $u\in {\mathcal{G}}^s_0(\real^d)$ and let $\widetilde{u}$ be an almost holomorphic extension of $u$ satisfying \eqref{ahol.3}.
If $\xi \in \real^d$, $|\xi| \ge 1$, we may assume after an orthogonal change of coordinates, that $\xi/|\xi|=e_d=(0,\ldots,0,1)\in \real^d$ and in view of Stokes' formula we have
$$
\widehat{u}(\xi)=\int_{\mathbf{\Pi}_{d}^{-}}e^{-i{z}\cdot\mathbf{\xi}}\,
\big({\partial_{\overline{z}_d}}\widetilde{u}\big)(x',{z})
\,dx'\wedge d\overline{z}_d\wedge dz_d,
$$
where $\mathbf{\Pi}_{d}^{-}=\big\{(x',{z_d})\in\real^{d-1}\times
\comp;\, \textrm{Im}(z_d)\leq 0\big\}$.

\medskip
\noindent
From \eqref{ahol.3} we get
$\big\vert{\partial}_{\overline{z}_d}\widetilde{u}({z})\big\vert
={\mathcal{O}(1)}
 \exp\Big(-\frac{1}{\mathcal{O}(1)}|\mathrm{Im}({z})|^{-\frac{1}{s-1}}\Big)$ for all
${z}\in \mathbf{\Pi}_{d}^{-}$.
Then,
$$
\big\vert\widehat{u}(\xi)\big\vert=\mathcal{O}(1)
\exp\left(-\inf_{0\leq t < \infty} \big(t|\xi|+\widetilde{C}t^{-\frac{1}{s-1}}\big)\right),
$$
for some $\widetilde{C}>0$. A straightforward calculation shows that the infimum over the positive half axis is attained at
$$
t_{\xi}=\left(\frac{\widetilde{C}}{s-1}\right)^{\frac{s-1}{s}}
\vert\xi\vert^{-\frac{s-1}{s}},
$$
and the corresponding value of the infimum is equal to
$$
\widetilde{C}^{\frac{s-1}{s}}\frac{s}{(s-1)^{\frac{s-1}{s}}} |\xi|^{\frac{1}{s}},
$$
which implies that
\begin{equation}\label{ahol.5}
\big\vert\widehat u(\xi)\big\vert=
\mathcal{O}(1)\, \exp\left(-\frac{1}{\mathcal{O}(1)}|\xi|^{\frac{1}{s}}\right),
\quad \hbox{ for all }\, \xi\in \real^d.
\end{equation}

\medskip
\noindent
Conversely, assume that \eqref{ahol.5} holds (which is the case when $u\in {\mathcal{G}}^s_0(\real^d)$). Following~\cite{Ma71}, we look for an extension $\widetilde{u}(z)=\widetilde{u}(x+i y)$ by truncation in Fourier's inversion formula. Let us start with the formal identity,
$$
\widetilde{u}(z)=\frac{1}{(2\pi)^d}\int_{{\bf R}^d} e^{i\xi\cdot x - \xi\cdot y} \widehat{u}(\xi)\,d\xi=
\mathcal{O}(1)\int_{{\bf R}^d} e^{|\xi| |y|-\frac{|\xi|^{\frac{1}{s}}}{C}}\,d\xi,
$$
where $C$ is a positive constant. For $|y|\le |\xi |^{-(s-1)/s}/(2C)$, $|\xi |\ge 1$, the integrand in the last integral
is $\le \exp \left( -|\xi |^{1/s}/(2C) \right)$. Let $\psi \in C_0^\infty
([0,1/(2C)))$ be equal to $1$ near $0$ and set
\begin{equation}\label{ahol.6}
\widetilde{u}(z)=\frac{1}{(2\pi )^d}\int_{{\bf R}^d}\psi \left(|y|
\, |\xi |^{\frac{s-1}{s}} \right)\widehat{u}(\xi )e^{(ix-y)\xi }d\xi
\end{equation}
so that $\widetilde{u}\in C^\infty (\comp^d)$. We have
\begin{equation}\label{ahol.7}
  \overline{\partial }\widetilde{u}(z)=\frac{1}{(2\pi )^d }
  \int (\partial _r\psi )\left(|y|\, |\xi |^{\frac{s-1}{s}} \right)
  |\xi |^{\frac{s-1}{s}}{\partial}_{\overline z}(|y|) \widehat{u}(\xi )
  e^{(ix-y)\xi }d\xi .
\end{equation}
Here, ${\partial}_{\overline z}(|y|)={\cal O}(1)$ and on the support of
$(\partial _r\psi )\left(|y|\, |\xi |^{(s-1)/s} \right)$ we have for
some constant $\widetilde{C}>2C$,
$$
\frac{1}{\widetilde{C}}\le |y|\, |\xi |^{\frac{s-1}{s}}\le \frac{1}{2C},
$$
i.e.\
\begin{equation}\label{ahol.8}
\left(\frac{1}{\widetilde{C}|y|} \right)^{\frac{1}{s-1}}\le |\xi |^{\frac{1}{s}}\le
\left(\frac{1}{2C|y|} \right)^{\frac{1}{s-1}} .
\end{equation}
We conclude that for $\widetilde{u}$ given in (\ref{ahol.6}),
\begin{equation}
\label{ahol.9}
|\overline{\partial }\widetilde{u}(z)|\le {\cal O}(1)\,\exp \left( -|\Im
  z|^{-\frac{1}{s-1}}/{\cal O}(1) \right),\ \ |\Im z|\le {\cal O}(1).
\end{equation}
We get the same estimates for $\partial _z^\alpha \partial_{\overline{z}}^\beta \overline{\partial }\widetilde{u}$, for all
$\alpha ,\beta \in \nat^d$.

\subsection{Approximate uniqueness via a Carleman estimate}
\label{uni}
The existence of almost holomorphic extensions was established in the previous subsections by appealing to the result of Carleson~\cite{Ca61} and also by Mather's method~\cite{Ma71}. We shall consider here almost holomorphic extensions which are not necessarily Gevrey and we shall get approximate uniqueness through a Carleman estimate of H\"ormander type.

\medskip
\noindent
To get the desired uniqueness estimate at a given point $z\in \comp^d$
with ${\mathrm{Im}\,z} \ne 0$, we can restrict the attention to the complex line
\begin{equation}
\label{uni.0}
\mathrm{Re}\,z + \comp\,\mathrm{Im}\, z,
\end{equation}
so it will suffice to consider the one-dimensional case.

\medskip
\noindent
Let $\Omega \Subset \comp$ be open connected with smooth boundary. Let $\phi \in C^\infty (\overline{\Omega };\real)$ be
strictly subharmonic,
\begin{equation}
\label{uni.1}
\partial^2_{z\,\overline{z}}\phi>0\quad\hbox{ on }\,\,\overline{\Omega}.
\end{equation}
To the operators
\begeq
\label{uni.1.1}
P:=D_{\overline{z}}=\frac{1}{i}\partial _{\overline{z}}=\frac{1}{2}(D_x+iD_y)\quad \hbox{ and }\quad P^*:=D_z=\frac{1}{i}\partial_z=\frac{1}{2}(D_x-iD_y),
\endeq
we associate the symbols
$$
\overline{\zeta } =\frac{1}{2}(\xi +i\eta )\quad \hbox{ and }\quad \zeta  =\frac{1}{2}(\xi -i\eta ),
$$
respectively. Here $z=x+iy$ with $(x,y)\in\real^2$. We introduce the corresponding conjugated operators:
$$
P_\phi =e^{\phi}Pe^{-\phi}\quad \hbox{ and }\quad P_\phi^* =e^{-\phi}P^*e^{\phi}.
$$
More explicitly,
\begin{equation}
\label{uni.2}
\begin{cases}
P_\phi =\frac{1}{i}(\partial_{\overline{z}}-\partial _{\overline{z}}\phi ),\\
P^*_\phi =\frac{1}{i}(\partial_{z}+\partial_{z}\phi ).
\end{cases}
\end{equation}
We think of $P_\phi $ as $P$, acting on $e^{-\phi}L^2(\Omega)=L^2\big(\Omega ;e^{2\phi(z)}\,L(dz)\big)$, where $L(dz)$ is the Lebesgue measure on ${\bf{C}}$. Formally, we have
\begin{equation}\label{uni.3}
[P_\phi ^*,P_\phi ]=-[\partial _z+\partial _z\phi , \partial
_{\overline{z}}-\partial _{\overline{z}}\phi ]=2\partial^2_{z\,
\overline{z}}\phi =\frac{1}{2} \Delta _{x,y} (\phi ).
\end{equation}
For $v\in H^1_0(\Omega)$ (i.e. of class $H^1(\Omega)$, vanishing on the boundary), we have, using the $L^2(\Omega)$--norm and the scalar product,
\begin{equation}
\label{uni.4}
\|P_\phi v\|^2=(P_\phi ^*P_\phi v|v)=([P_\phi ^*,P_\phi ]v|v)
+\|P_\phi ^*v\|^2,
\end{equation}
and (\ref{uni.3}) leads to the Carleman estimate
\begin{equation}\label{uni.5}
2(\partial^2_{z\,\overline{z}}\phi\, v|v)\le \|P_\phi v\|^2,
\end{equation}
i.e.
\begin{equation}\label{uni.6}
\sqrt{2}\,\Big\|\big(\partial^2_{z\,\overline{z}}\phi\big)^{\frac12}\,v\Big\|\le \Big\|P_\phi\,v\Big\|,
\end{equation}
or after removing the conjugation,
\begin{equation}\label{uni.7}
\sqrt{2}\,\Big\|e^{\phi}\big(\partial^2_{z\,\overline{z}}\phi\big)^{\frac12}
\,v\Big\|\le \Big\| e^{\phi}\partial_{\overline{z}}\, v\Big\|,\quad \hbox{ for }\ v\in H_0^1(\Omega).
\end{equation}
We may notice here that in H\"ormander's approach to $\overline{\partial}$, we work in the weighted space $e^{\phi}L^2(\Omega)$ and get a priori estimates for $P_\phi^*$, leading to existence results for the operator $P_\phi $, see~\cite[Chapter 4]{Horm_conv}.

\bigskip
\noindent
We shall next discuss the choice of $\phi$. Assume that $\Omega$ is contained in the open upper half-plane with
$\overline{\Omega } \cap \real = [-1,1]$. We know that a $\mathcal{G}^s$ function $u$, defined near $[-1,1]$, has an extension
$\widetilde{u}\in \mathcal{C}^\infty (\overline{\Omega })$, which satisfies
\begin{equation}
\label{uni.8}
\overline{\partial }\widetilde{u}(z)=\mathcal{O}(1)
\exp \left(-\frac{1}{C_0}{\left(\mathrm{Im}\, z\right)^{-\frac{1}{s-1}}}\right),
\end{equation}
for some $C_0>0$. In the following we shall assume that $C_0=1$ for simplicity. In order to apply (\ref{uni.6}), (\ref{uni.7}), we would like to have a suitable modification of
\begin{equation}
\label{uni.9}
\phi (z)=\left(\mathrm{Im}\, z\right)^{-\frac{1}{s-1}}.
\end{equation}
Recalling that
$\partial^2_{z\,\overline{z}}=\frac{1}{4}\Delta_{\mathrm{Re}\,z,\mathrm{Im}\,z}$,
we compute $\displaystyle \partial _{\mathrm{Im}\,z}\phi = -(s-1)^{-1}\big({\mathrm{Im}\, z}\big)^{-\left(1 + \frac{1}{s-1}\right)}$ and then
\begin{equation}\label{uni.10}
\Delta_{\mathrm{Re}\,z,\mathrm{Im}\,z}
\phi=\partial^2_{\mathrm{Im}\,z,\mathrm{Im}\,z}\phi
=\frac{s}{(s-1)^2}\left({\mathrm{Im}\, z}\right)^{-\left(2 + \frac{1}{s-1}\right)}>0.
\end{equation}
We would like to apply (\ref{uni.7}) to the difference $\widetilde{u}_1-\widetilde{u}_2$ of
two almost holomorphic extensions of the same function $u$, both
satisfying (\ref{uni.8}) and run into two technical difficulties:

\begin{itemize}
\item[(i)] the function $\phi $ in (\ref{uni.9}) is not smooth up to the real part of the boundary of
  $\Omega $,
\item[(ii)] the difference $(\widetilde{u}_1-\widetilde{u}_2)$ does not vanish on all of $\partial \Omega $,
    but only on $\partial \Omega \cap \real$.
\end{itemize}

\medskip
\noindent
The first difficulty is easy to resolve by replacing $\phi $ by $\phi_\varepsilon (z)=\phi (z+i\varepsilon )$ and letting $\varepsilon$ tend to $0$. To resolve the second difficulty, one can multiply $(\widetilde{u}_1-\widetilde{u}_2)$ by a cutoff function that vanishes near
$\partial \Omega \setminus ]-1,1[$ and we then need to modify $\phi $ in this region.

\medskip
\noindent
In general, let $f(z)$ be smooth and real valued, defined near some point $z_0\in \comp$ where $f(z_0)=0$ and $df(z_0)\ne 0$. Consider
\begin{equation}
\label{uni.11}
\phi (z)=f(z)^{-\frac{1}{s-1}}
\end{equation}
in $\big\{z\in \mathrm{neigh\,}(z_0,\comp);\, f(z)>0\}$.
Then
$$
\partial_z\phi (z)=-(s-1)^{-1}f(z)^{-\left(1 + \frac{1}{s-1}\right)}\, \partial_z f(z),
$$
\begin{equation}
\label{uni.12}
\partial_{z\,\overline{z}}^2\phi(z)=
f(z)^{-\left(2 + \frac{1}{s-1}\right)} \frac{s}{(s-1)^2}
\left(|\partial _zf|^2-\frac{s-1}{s}f(z)\, \partial
  _{z\,\overline{z}} f(z) \right),
\end{equation}
generalizing (\ref{uni.10}), where $f(z)$ was equal to $\mathrm{Im}\, z$.

\medskip
\noindent
Let $\psi \in C_0^\infty \big(]-1,1[;[0,1]\big)$ be equal to one on $[-\frac{1}{2},\frac{1}{2}]$ and $<1$ outside that interval, let $g(t)=1-\psi (t)$, and put
\begin{equation}\label{uni.13}
f_\varepsilon (z) =\mathrm{Im}\,z + ag(\mathrm{Re}\,z)+\varepsilon,
\end{equation}
\begin{equation}\label{uni.14}\phi _\varepsilon (z)=f_\varepsilon
  (z)^{-\frac{1}{s-1}}.\end{equation}
Here $0<a\ll 1$ is fixed and $\varepsilon \ge 0$ is a small parameter. We
notice that

\begin{itemize}
\item[1)]  $\phi _\varepsilon \in C^\infty (\Omega )$ when $\varepsilon \ge0$,
$\phi _\varepsilon \in C^\infty (\overline{\Omega })$ when $\varepsilon >0$,

\item[2)] the function $\varepsilon \longmapsto f_\varepsilon $ is increasing, while
  $\varepsilon\longmapsto \phi_\varepsilon$ is decreasing.

\item[3)] We have by (\ref{uni.12}) that
\begin{equation}
\label{uni.15}
\partial^2_{z\,\overline{z}}\phi_\varepsilon (z)\asymp f_\varepsilon (z)^{-\left(2 + \frac{1}{s-1}\right)},
\end{equation}
uniformly for $(z,\varepsilon )\in \Omega \times ]0,\varepsilon _0]$, for some $\varepsilon _0>0$.
\end{itemize}

\medskip
\noindent
By 2), we have
$$
\phi _\varepsilon \le \phi _0 = \Big(\mathrm{Im}\, z+a g(\mathrm{Re}z)\Big)^{-\frac{1}{s-1}},
$$
and $\phi _0 \in C^\infty (\overline{\Omega }\setminus [-\frac12,\frac12])$. Let $\chi \in \mathcal{C}^\infty(\overline{\Omega })$  be
equal to one near $\mathrm{supp\,}\psi $ and vanish near $\partial \Omega \setminus ]-1,1[$.

\medskip
\noindent
Let
$\widetilde{u}_1,\widetilde{u}_2\in \mathcal{C}^\infty(\overline{\Omega })$ be two almost holomorphic extensions of the same
$\mathcal{G}^s$ function $u$, defined near $\overline{\Omega }\cap \real$, which satisfy (\ref{uni.8}) (with $C_0=1$ for simplicity), so
that
\begin{equation}\label{uni.16}
\overline{\partial }(\widetilde{u}_1-\widetilde{u}_2)=\mathcal{O}(1)\exp
\left(-\big({\mathrm{Im}\, z}\big)^{-\frac{1}{s-1}} \right) \hbox{ in }\overline{\Omega },
\end{equation}
\begin{equation}\label{uni.17}
\widetilde{u}_1-\widetilde{u}_2=0\,\, \hbox{ on }\,\, [-1,1].
\end{equation}
With $\chi$ as above, let
\begin{equation}\label{uni.18}
v:=\chi (\widetilde{u}_1-\widetilde{u}_2)\in \mathcal{C}^\infty (\overline{\Omega }).
\end{equation}
Then
\begin{equation}\label{uni.19}
{{v}_\vert}_{\partial \Omega }=0,
\end{equation}
\begin{equation}\label{uni.20}
\partial _{\overline{z}}v=(\widetilde{u}_1-\widetilde{u}_2)\partial _{\overline{z}}\chi +\chi
\partial_{\overline{z}}(\widetilde{u}_1-\widetilde{u}_2)=\mathcal{O}(1)e^{-\phi _0}\hbox{ in }\Omega .
\end{equation}
Combining this with (\ref{uni.7}), (\ref{uni.15}), (\ref{uni.16}) and
letting $\varepsilon \to 0$, we get
\begin{equation}\label{uni.21}
\Big\| \big(f_0(z)\big)^{-\frac{1}{2}\left(2+\frac{1}{s-1} \right)}e^{\phi _0}\,v\Big\|_{L^2(\Omega
)}\le \mathcal{O}(1) \Big\| e^{\phi _0}\partial
_{\overline{z}}\,v\Big\|_{L^2(\Omega )}.
\end{equation}

\medskip
\noindent
Let $W$ be a small complex neighborhood of $[-1/2,1/2]$ so
that
$$
\phi _0(z)=\big(\mathrm{Im}\, z\big)^{-\frac{1}{s-1}}\hbox{ and }\chi =1 \hbox{ in }W.
$$
Then
\begeq
\label{uni.22}
\left\|\left(\mathrm{Im}\, z\right)^{-\frac{1}{2}\left(2+\frac{1}{s-1}\right)}
\exp  \left({\left(\mathrm{Im}\, z\right)^{-\frac{1}{s-1}}}\right)
(\widetilde{u}_1-\widetilde{u}_2)
\right\|_{L^2(W)}
\leq  \mathcal{O}(1) \left\| e^{\phi _0}\partial
_{\overline{z}}\,v\right\|_{L^2(\Omega)}.
\endeq
By (\ref{uni.20}), the right hand side of \eqref{uni.22} is $\mathcal{O}(1)$. More explicitly,
from (\ref{uni.22}), (\ref{uni.20}), and the fact that $\phi _0$ is bounded on $\mathrm{supp\,}(\overline{\partial }\chi )$ and $\le \phi $, we have
\begin{multline*}
\left\|\left(\mathrm{Im}\, z\right)^{-\frac{1}{2}\left(2+\frac{1}{s-1} \right)}
\exp  \left({\left(\mathrm{Im}\, z\right)^{-\frac{1}{s-1}}}\right)
(\widetilde{u}_1-\widetilde{u}_2)
\right\|_{L^2(W)} \\
\leq
\mathcal{O}(1)   \left(\left\|
\exp \left(({\mathrm{Im}\, z})^{-\frac{1}{s-1}} \right) \overline{\partial }(\widetilde{u}_1-\widetilde{u}_2)
\right\|_{L^2(\Omega )} +\left\|
\widetilde{u}_1-\widetilde{u}_2
\right\|_{L^2(\mathrm{supp\,}\overline{\partial }\chi )}\right),
\end{multline*}
and we have proved the following slightly more general statement:
\begin{lemma}
\label{uni1}
Let $\Omega \Subset \comp$ be open with smooth boundary, contained in the open upper half plane, with
$\overline{\Omega }\cap\real = [-1,1]$. Let $\phi $ be given by {\rm (\ref{uni.9})}. Then there exists an open neighborhood $W$ of $[-1/2,1/2]$ in $\overline{\Omega }$ such that
\begin{equation}
\label{uni.23}
\left\| {(\mathrm{Im}\,z)}^{-\frac{1}{2}\left(2+\frac{1}{s-1}\right)} e^\phi
    (\widetilde{u}_1-\widetilde{u_2})\right\|_{L^2(W)}\leq {{\cal O}(1)} \left(
\left\| e^\phi \overline{\partial }(\widetilde{u}_1-\widetilde{u}_2)
\right\|_{L^2(\Omega )}+\left\| \widetilde{u}_1-\widetilde{u}_2
\right\|_{L^2(\Omega )}
  \right),
\end{equation}
for all $(\widetilde{u}_1,\widetilde{u}_2)\in (H^1(\Omega ))^2$ with
$\widetilde{u}_1=\widetilde{u}_2$ on $\overline{\Omega}\cap \real$ and $\overline{\partial }(\widetilde{u}_1-\widetilde{u}_2)\in
e^{-\phi }L^2(\Omega )$. \end{lemma}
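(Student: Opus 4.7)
My approach follows the outline indicated in the text preceding the lemma, organizing the ingredients into a clean proof. The goal is to apply the Carleman estimate (\ref{uni.7}) to the difference $\widetilde{u}_1-\widetilde{u}_2$, which faces the two obstructions (i) and (ii) listed above: the weight $\phi$ in (\ref{uni.9}) blows up on $\overline{\Omega}\cap\real$, and $\widetilde{u}_1-\widetilde{u}_2$ is not known to vanish on $\partial \Omega\setminus\real$. I would resolve the second difficulty by localizing: set $v=\chi(\widetilde{u}_1-\widetilde{u}_2)$ with the cutoff $\chi\in C^\infty(\overline{\Omega})$ described before (\ref{uni.18}). The hypothesis $\widetilde{u}_1=\widetilde{u}_2$ on $\overline{\Omega}\cap\real=[-1,1]$ together with $\chi=0$ near $\partial \Omega\setminus\,]-1,1[$ guarantees that $v\in H^1_0(\Omega)$. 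To resolve the first difficulty, I would substitute for $\phi$ the regularized weight $\phi _\varepsilon$ from (\ref{uni.14}), which belongs to $C^\infty(\overline{\Omega})$ for $\varepsilon>0$ and satisfies the uniform lower bound (\ref{uni.15}).

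Applying (\ref{uni.7}) with the weight $\phi _\varepsilon$ to $v$, and invoking (\ref{uni.15}) to rewrite $(\partial^2_{z\overline{z}}\phi _\varepsilon)^{1/2}$, yields
\[
\left\| e^{\phi _\varepsilon}\, f_\varepsilon^{-\frac{1}{2}(2+\frac{1}{s-1})} v\right\|_{L^2(\Omega)}\;\le\; \mathcal{O}(1)\left\| e^{\phi _\varepsilon}\,\partial _{\overline{z}} v\right\|_{L^2(\Omega)}.
\]
To bound the right-hand side I would expand $\partial _{\overline{z}}v=\chi\,\partial _{\overline{z}}(\widetilde{u}_1-\widetilde{u}_2)+(\widetilde{u}_1-\widetilde{u}_2)\,\partial _{\overline{z}}\chi$. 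The first term is pointwise dominated by $|\partial _{\overline{z}}(\widetilde{u}_1-\widetilde{u}_2)|$ since $|\chi|\le 1$, contributing $\|e^{\phi _\varepsilon}\overline{\partial}(\widetilde{u}_1-\widetilde{u}_2)\|_{L^2(\Omega)}$. The second is supported in $\mathrm{supp}(\overline{\partial}\chi)\subset \overline{\Omega}\setminus[-1/2,1/2]$, a set that stays at a positive distance from the real axis by the choice of $\chi$; there $\phi _\varepsilon\le \phi _0$ is uniformly bounded in $\varepsilon$, so this term contributes at most $\mathcal{O}(1)\|\widetilde{u}_1-\widetilde{u}_2\|_{L^2(\Omega)}$.

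To conclude, I would let $\varepsilon\to 0^+$: by property 2) preceding (\ref{uni.15}), $\phi _\varepsilon\nearrow\phi _0$, so monotone convergence on the left and dominated convergence on the right (the $\phi _0$-integrand is an upper bound and, if infinite, makes the inequality vacuous) promote the estimate to its $\phi _0$-version. Finally, on a sufficiently small complex neighborhood $W$ of $[-1/2,1/2]$ in $\overline{\Omega}$ one has $\psi(\mathrm{Re}\,z)=1$, hence $g=0$, so $f_0=\mathrm{Im}\,z$, $\phi _0=\phi$, and $\chi\equiv 1$, giving $v=\widetilde{u}_1-\widetilde{u}_2$ on $W$; restricting the norm on the left-hand side to $W$ produces (\ref{uni.23}). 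The main technical point to verify carefully is (\ref{uni.15}) itself, which via (\ref{uni.12}) requires $|\partial _z f_\varepsilon|^2$ to dominate $\frac{s-1}{s}f_\varepsilon\,\partial^2_{z\overline{z}}f_\varepsilon$ uniformly on $\overline{\Omega}\times\,]0,\varepsilon_0]$; a direct computation gives $|\partial _z f_\varepsilon|^2\ge 1/4$ while $\partial^2_{z\overline{z}}f_\varepsilon=\frac{a}{4}g''(\mathrm{Re}\,z)=\mathcal{O}(a)$, so the desired lower bound follows by choosing $0<a\ll 1$.
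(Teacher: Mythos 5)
Your proposal follows the paper's proof closely and has the right structure: regularize the weight to $\phi_\varepsilon$, localize with the cutoff $\chi$ so that $v=\chi(\widetilde u_1-\widetilde u_2)\in H^1_0(\Omega)$, apply the Carleman estimate (\ref{uni.7}), expand $\partial_{\overline z}v$ by the Leibniz rule, let $\varepsilon\to 0^+$, and restrict to a neighborhood $W$ of $[-1/2,1/2]$ where $\phi_0=\phi$, $f_0=\mathrm{Im}\,z$ and $\chi\equiv 1$. Your verification of (\ref{uni.15}) via the identity $|\partial_z f_\varepsilon|^2=\tfrac14+\tfrac{a^2}{4}|g'|^2$ and $\partial^2_{z\overline z}f_\varepsilon=\tfrac{a}{4}g''(\mathrm{Re}\,z)$ is correct.

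There is, however, one genuinely wrong justification, and it touches the subtle point of the construction. You assert that $\mathrm{supp}(\overline\partial\chi)$ \emph{stays at a positive distance from the real axis by the choice of $\chi$}. That is false and cannot be arranged: $\chi$ equals one near $\mathrm{supp}\,\psi\Subset\,]-1,1[$ and vanishes near $\partial\Omega\setminus\,]-1,1[$, which contains the endpoints $\pm1$, so along each real segment between $\mathrm{supp}\,\psi$ and $\pm1$ the cutoff transitions continuously from $1$ to $0$, forcing $\overline\partial\chi\ne0$ somewhere on $\real$. (A smooth $\chi$ with $\overline\partial\chi=0$ in a connected neighborhood of that segment would be holomorphic there and could not interpolate between the values $1$ and $0$.) The correct reason $\phi_\varepsilon\le\phi_0$ is uniformly bounded on $\mathrm{supp}(\overline\partial\chi)$ is precisely the purpose of adding $a\,g(\mathrm{Re}\,z)$ inside $f_\varepsilon$ in (\ref{uni.13}): the zero set of $f_0=\mathrm{Im}\,z+a\,g(\mathrm{Re}\,z)$ is the segment $[-1/2,1/2]\subset\real$ where $g=1-\psi$ vanishes, and $\mathrm{supp}(\overline\partial\chi)$ avoids a neighborhood of that segment because $\chi\equiv1$ there, so by compactness $f_0\ge c_0>0$ on $\mathrm{supp}(\overline\partial\chi)$ and hence $\phi_0=f_0^{-1/(s-1)}$ is bounded there. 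This is also why one cannot run the argument with the naive regularization $\phi(z+i\varepsilon)$ of the unmodified weight: the $\chi$-derivative term would then carry a factor $e^{\phi_\varepsilon}$ that blows up as $\varepsilon\to0$ on the real portion of $\mathrm{supp}(\overline\partial\chi)$. With this correction, your argument coincides with the paper's.
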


\medskip
\noindent
This applies to the case when $\widetilde{u}_1$, $\widetilde{u}_2$ are two almost holomorphic extensions of  the same function $u\in {\cal G}^s(\mathrm{neigh\,}([-1,1],\real))$, satisfying (\ref{uni.8}) with $C_0=1$.

\bigskip
\noindent
We observed after (\ref{ahol.9}) that if $u\in {\cal G}_0^s(\real^d)$, and $\widetilde{u}$ is given in
(\ref{ahol.6}), then for all $\alpha ,\beta \in \nat^d$, there exists $C_{\alpha ,\beta }>0$ such that
\begin{equation}
\label{uni.24}
|\overline{\partial }\partial _z^\alpha \partial _{\overline{z}}^\beta
\widetilde{u}(z)|\le C_{\alpha ,\beta }\exp \left(-|\mathrm{Im}\, z|^{\frac{1}{s-1}}/C_0 \right),
\end{equation}
where $C_0>0$ is independent of $\alpha ,\beta $.

\medskip
\noindent
In the one-dimensional case, if $\widetilde{u}_1$, $\widetilde{u}_2$ are two almost holomorphic extensions of the same
function $u\in {\cal G}_0^s$, satisfying (\ref{uni.24}) (assuming for simplicity that $C_0=1$), we can apply (\ref{uni.23}) with $\widetilde{u}_j$ replaced by $\partial
_z^\alpha \partial _{\overline{z}}^\beta \widetilde{u}_j$ and see that
\begin{equation}
\label{uni.25}
\left\|(\mathrm{Im}\,z)^{-\frac{1}{2}(2+\frac{1}{s-1})}e^\phi \partial
  _z^\alpha \partial _{\overline{z}}^\beta
  (\widetilde{u}_1-\widetilde{u}_2) \right\|_{L^2(W)}\le {\cal
  O}_{\alpha ,\beta }(1).
\end{equation}

\medskip
\noindent
Remaining in the one-dimensional case, we  shall next show how to get from (\ref{uni.25}) an estimate of a weighted $L^\infty $-norm, having in mind that if $u\in H^2(D(0,1))$, then in view of the Sobolev embedding theorem, we have
\begin{equation}
\label{uni.26}
\| u\|_{L^\infty (D(0,1/2))} \le {\cal O}(1)\| u\|_{H^2(D(0,1))}.
\end{equation}
Because of the presence of exponential weights in (\ref{uni.25}), we shall work in very small discs $D(z,r)$ with the property that $e^{\phi (\zeta)}\asymp e^{\phi (z)}$ for $\zeta \in D(z,r)$. We have $\nabla \phi(z)
={\cal O}(1)(\mathrm{Im}\, z)^{-1-1/(s-1)}$ and $\phi (\zeta)-\phi
(z)={\cal O}(1)$ if $|\zeta-z|\leq (\mathrm{Im}\,z)^{1+1/(s-1)}$ and $0< \mathrm{Im}\, z \ll 1$. Thus, with
\begin{equation}\label{uni.27}
r=r(z) = (\mathrm{Im}\,z)^{1+\frac{1}{s-1}},
\end{equation}
we have
\begin{equation}\label{uni.28}
e^{\phi (\zeta)} \asymp e^{\phi (z)} \hbox{ when } 0 < \mathrm{Im}\, z \ll 1,\ \zeta \in D(z,r),
\end{equation}
and we have
$$
(\mathrm{Im}\,\zeta)^{-\frac{1}{2}\left(2+\frac{1}{s-1}\right)} \asymp
(\mathrm{Im}\, z)^{-\frac{1}{2}\left(2+\frac{1}{s-1}\right)},
$$
in the same set. For $\zeta \in D(z,r)$, write $\zeta = z+rw$, $w\in D(0,1)$. Using that
$$
\partial_{\zeta} = r^{-1}\partial _w,\ \partial_{\overline{\zeta}} = r^{-1}\partial _{\overline{w}},\ L(d\zeta)=r^2L(dw),
$$
and replacing $W$ in (\ref{uni.25}) by the smaller set $D(z,r)$ (so we have to take $z$ in a slightly shrunk copy of $W$) we get from (\ref{uni.25}),
\begin{equation}\label{uni.29}
(\mathrm{Im}\, z)^{-\frac{1}{2}\left(2+\frac{1}{s-1}\right)}e^{\phi
  (z)}r(z)^{1-\alpha -\beta }\left\| \partial _w^\alpha
  \partial_{\overline{w}}^\beta (\widetilde{u}_1-\widetilde{u}_2)
\right\|_{L^2(D(0,1))}
\le {\cal O}_{\alpha ,\beta }(1),
\end{equation}
where
$\widetilde{u}_1-\widetilde{u}_2=(\widetilde{u}_1-\widetilde{u}_2)(z+rw)$ is viewed as a function of $w$. Using (\ref{uni.29}) for
$\alpha +\beta \le 2$, we get a bound for $\|\widetilde{u}_1-\widetilde{u}_2\|_{H^2(D(0,1))}$ (in the $w$-variable) and with (\ref{uni.26}), we get
$$
(\mathrm{Im}\,z)^{-\frac{1}{2}\left(2+\frac{1}{s-1}\right)}e^{\phi(z)}r(z)\left\| \widetilde{u}_1-\widetilde{u}_2
\right\|_{L^\infty (D(0,1/2))}
\le {\cal O}(1).
$$
With $w=0$, we obtain
\begin{equation}
\label{uni.30}
(\mathrm{Im}\, z)^{-\frac{1}{2}\left(2+\frac{1}{s-1}\right)}e^{\phi
  (z)}r(z)\left| \widetilde{u}_1(z)-\widetilde{u}_2(z)
\right|
\le {\cal O}(1),
\end{equation}
uniformly on $W$ (after a slight decrease of $W$ or a slight increase of the original $W$). Using (\ref{uni.25}) for higher derivatives,
gives an extension,
\begin{equation}
\label{uni.31}
(\mathrm{Im}\, z)^{-\frac{1}{2}\left(2+\frac{1}{s-1}\right)}e^{\phi(z)}r(z)\left| \partial _z^\alpha \partial _{\overline{z}}^\beta (\widetilde{u}_1-\widetilde{u}_2)(z)
\right|
\le {\cal O}(1),
\end{equation}
uniformly on $W$, for every $(\alpha ,\beta )\in {\bf{N}}^2$. Here we notice that by (\ref{uni.27}),
\begin{equation}
\label{uni.32}
(\mathrm{Im}\, z)^{-\frac{1}{2}\left(2+\frac{1}{s-1} \right)}r(z)= (\mathrm{Im}\, z)^{\frac{1}{2(s-1)}}.
\end{equation}

\bigskip
\noindent
We now return to the $d$-dimensional case and apply the observation around (\ref{uni.0}) about the reduction to the one-dimensional case. From this and (\ref{uni.31}), (\ref{uni.32}), we get the main result of this subsection,
\begin{theo}
\label{uni2}
Let $u\in {\cal G}_0^s(\real^d)$, let $N_0\in {\bf{N}}$, and let $\widetilde{u}_1$, $\widetilde{u}_2$ be two almost holomorphic
extensions of $u$ such that (cf.\ {\rm (\ref{ahol.9})} and the subsequent remark) for all $\alpha ,\beta \in \nat^d$ with $|\alpha
|+|\beta |\le N_0+2$, we have for $j=1,2$,
\begin{equation}
\label{uni.33}
\left|\partial _z^\alpha \partial _{\overline{z}}^\beta
  \overline{\partial }\widetilde{u}_j(z)\right|
\le {\cal O}(1)\exp \left(-|\mathrm{Im}\, z|^{\frac{1}{s-1}}/C_0 \right),\ \
|\mathrm{Im}\,z|\le 1/{\cal O}(1),
\end{equation}
where $C_0>0$. Then,
\begin{equation}\label{uni.34}
\left|\partial _z^\alpha \partial _{\overline{z}}^\beta (\widetilde{u}_1(z)-\widetilde{u}_2(z))\right|
\le {\cal O}(1) |\mathrm{Im}\, z|^{-\frac{1}{2(s-1)}}\exp \left(-|\mathrm{Im}\, z|^{\frac{1}{s-1}}/C_0 \right),\ \
|\mathrm{Im}\,z |\le 1/{\cal O}(1),
\end{equation}
for all $\alpha ,\beta \in \nat^d$ with $|\alpha |+|\beta |\le N_0$. We get the same conclusion for $u\in {\cal G}^s_b(\real^d)$.
\end{theo}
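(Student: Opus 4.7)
My plan is to deduce Theorem \ref{uni2} from the one-dimensional analysis of the previous subsection, following the reduction indicated around (\ref{uni.0}). Fix $z_0 = x_0 + iy_0 \in \comp^d$ with $y_0 \ne 0$ and $|y_0| \le 1/\mathcal{O}(1)$, and set $e = y_0/|y_0| \in S^{d-1} \subset \real^d$. Consider the affine holomorphic embedding $\iota_e \colon \comp \to \comp^d$, $\iota_e(\zeta) = x_0 + \zeta e$, so that $\iota_e(\real) \subset \real^d$, $\iota_e(i|y_0|) = z_0$, and -- crucially -- $|\mathrm{Im}\,\iota_e(\zeta)| = |\mathrm{Im}\,\zeta|$ for every $\zeta \in \comp$.

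The hypotheses transfer cleanly to the line $\iota_e(\comp)$. Since $e$ is real, the chain rule gives $\partial_{\overline{\zeta}}(F \circ \iota_e)(\zeta) = \sum_k e_k (\partial_{\overline{z}_k} F)(\iota_e(\zeta))$ for any $F \in C^1(\comp^d)$. Applied with $F = \partial_z^\alpha \partial_{\overline{z}}^\beta \widetilde{u}_j$, the bound (\ref{uni.33}) produces the one-dimensional Cauchy--Riemann estimate
$$
\left|\partial_{\overline{\zeta}}\bigl(\partial_z^\alpha \partial_{\overline{z}}^\beta \widetilde{u}_j \circ \iota_e\bigr)(\zeta)\right| \le \mathcal{O}(1)\, \exp\left(-|\mathrm{Im}\,\zeta|^{-\frac{1}{s-1}}/C_0\right),
$$
uniformly in $(x_0, e)$, for all $|\alpha| + |\beta| \le N_0 + 2$. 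Moreover, the Borel conditions (\ref{ahol.1}) that are built into the notion of an almost holomorphic extension force $\partial_z^\alpha \partial_{\overline{z}}^\beta \widetilde{u}_1 = \partial_z^\alpha \partial_{\overline{z}}^\beta \widetilde{u}_2$ identically on $\real^d$, so the traces $\partial_z^\alpha \partial_{\overline{z}}^\beta \widetilde{u}_j \circ \iota_e$ agree on $\real \supset \iota_e^{-1}(\real^d)$.

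With these two facts in hand, I would apply the Carleman estimate of Lemma \ref{uni1} to the pair of restrictions on a fixed domain $\Omega \Subset \comp$ contained in the open upper half plane with $\overline{\Omega}\cap\real = [-1,1]$, and then repeat verbatim the localization on the discs $D(\zeta, r(\zeta))$ with $r(\zeta) = (\mathrm{Im}\,\zeta)^{1+1/(s-1)}$ together with the Sobolev-embedding step (\ref{uni.26})--(\ref{uni.31}). Evaluating the resulting pointwise estimate at $\zeta = i|y_0|$ and simplifying via the algebraic identity (\ref{uni.32}) produces precisely (\ref{uni.34}) at $z = z_0$, for every $|\alpha| + |\beta| \le N_0$ -- the loss of two derivatives being exactly the price paid for the Sobolev embedding (\ref{uni.26}).

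The main work is essentially bookkeeping: one must verify that the constants appearing in Lemma \ref{uni1} and in the weight-equivalence (\ref{uni.28}) are uniform in $(x_0, e)$. This is automatic, since the entire one-dimensional machinery depends only on the fixed segment $[-1,1]$, on $s$, and on $C_0$, while the chain-rule reduction contributes only dimensional constants controlled by $|e| = 1$. The extension from $\mathcal{G}^s_0(\real^d)$ to $\mathcal{G}^s_b(\real^d)$ requires no further argument, since Lemma \ref{uni1} uses only the agreement of $\widetilde{u}_1$ and $\widetilde{u}_2$ on $\overline{\Omega}\cap\real$ together with the quantitative $\overline{\partial}$-decay, both of which remain in force in the bounded case.
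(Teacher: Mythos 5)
Your proposal is correct and coincides with the paper's argument: reduce to dimension one along the complex line $\mathrm{Re}\,z_0 + \comp\,\mathrm{Im}\,z_0$, apply Lemma \ref{uni1} to the restrictions of $\partial_z^\alpha\partial_{\overline{z}}^\beta\widetilde{u}_j$, then upgrade to pointwise bounds by the small-disc/Sobolev-embedding step (\ref{uni.26})--(\ref{uni.31}), the loss of two derivatives coming precisely from that last step. The chain-rule transfer of the hypotheses to the line, the identity $|\mathrm{Im}\,\iota_e(\zeta)|=|\mathrm{Im}\,\zeta|$, the agreement of the traces on $\real$ via (\ref{ahol.1}), and the uniformity in $(x_0,e)$, which you spell out, are exactly the content of the ``observation around (\ref{uni.0})'' that the paper invokes more tersely.
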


\section{Pseudodifferential operators with Gevrey symbols in the complex domain}
\label{psg}
\setcounter{equation}{0}
\subsection{Almost holomorphic extensions and contour deformations}
\label{cds}
In the beginning of this subsection, we shall recall, following~\cite{Sj96},~\cite{HiSj15}, some basic facts concerning semiclassical pseudodifferential operators in the Weyl quantization, acting on quadratic exponentially weighted spaces of holomorphic functions (Bargmann spaces).

\medskip
\noindent
Let $\Phi_0$ be a strictly plurisubharmonic quadratic form on $\comp^n$. Associated to $\Phi_0$ we introduce the real $2n$-dimensional linear subspace
\begin{equation}
\label{psg.0}
\Lambda_{\Phi_0}=\left\{\left(x, \frac{2}{i}\frac{\partial\Phi_0}{\partial x}(x)\right), \,\,\, x\in \comp^n\right\} \subset \comp^{2n} = \comp^n_x \times \comp^n_{\xi}.
\end{equation}
The linear subspace $\Lambda_{\Phi_0}$ is I-Lagrangian and R-symplectic, in the sense that the restriction of the complex symplectic (2,0) form
\begeq
\label{psg.01}
\sigma = \sum_{j=1}^n d\xi_j \wedge dx_j
\endeq
on $\comp^n_x \times \comp^n_{\xi}$ to $\Lambda_{\Phi_0}$ is real and non-degenerate. In particular, $\Lambda_{\Phi_0}$ is maximally totally real. Let
\begin{equation}
\label{psg.0.5}
S(\Lambda_{\Phi_0}) = \big\{a\in C^\infty(\Lambda_{\Phi_0});\,\,\,
\partial^\alpha a=\mathcal{O}_\alpha(1)\quad \forall\alpha \in \nat^{2n}\big\}.
\end{equation}
We shall let symbols $a\in S(\Lambda_{\Phi_0})$ depend on the semiclassical parameter $h\in (0,1]$, provided that $a(\cdot;h)$ belongs to a bounded set in $S(\Lambda_{\Phi_0})$, when $h$ varies in $(0,1]$.

\bigskip
\noindent
Let $a\in S(\Lambda_{\Phi_0})$ and let $u\in\textrm{Hol}(\comp^n)$ be such that $u(x)=\mathcal{O}_{h,N}(1) \langle x \rangle^{-N} e^{\frac{\Phi_0(x)}{h}}$, for all $N\geq 0$. We set
\begin{equation}
\label{psg.1}
a^{w}_\Gamma(x,hD_x)u(x):=\frac{1}{(2\pi h)^n}\int\hskip-2mm\int_{\Gamma(x)} e^{\frac{i}{h}(x-y)\cdot \theta}
a\left(\frac{x+y}{2},\theta;h\right) u(y)\, dy \wedge\, d\theta,
\end{equation}
where $\Gamma(x)\subset \comp^n_{y,\theta}$ is the natural $2n$--dimensional contour of integration given by
\begeq
\label{psg.1.1}
\theta=\frac{2}{i}\frac{\partial \Phi_0}{\partial x}\left(\frac{x+y}{2}\right), \quad y\in \comp^n.
\endeq
The following consequence of Taylor's formula,
\begeq
\label{psg.1.1.1}
{\rm Re}\, \biggl(2 \partial_x \Phi_0\left(\frac{x+y}{2}\right) \cdot (x-y) \biggr) = \Phi_0(x) - \Phi_0(y),
\endeq
valid for the real valued quadratic form $\Phi_0$, assures that the integral in (\ref{psg.1}) converges absolutely. Let us also recall from~\cite{Sj96},~\cite{HiSj15} that $a^{w}_\Gamma(x,hD_x)u \in {\rm Hol}(\comp^n)$.

\bigskip
\noindent
It is established in~\cite{Sj96},~\cite{HiSj15} that the operator $a^w_{\Gamma}(x,hD_x)$ extends to a uniformly bounded map
\begeq
\label{psg.1.2}
a^w_{\Gamma}(x,hD_x) = {\cal O}(1): H_{\Phi_0}(\comp^n) \rightarrow H_{\Phi_0}(\comp^n).
\endeq
Here $H_{\Phi_0}(\comp^n)$ is the Bargmann space defined by
\begeq
\label{psg.1.3}
H_{\Phi_0}(\comp^n) =\textrm{Hol}(\comp^n)\cap L^2(\comp^n, e^{-2\Phi_0/h}L(dx)),
\endeq
with $L(dx)$ being the Lebesgue measure on $\comp^n$. The proof of the mapping property (\ref{psg.1.2}) given in~\cite{Sj96},~\cite{HiSj15} proceeds by introducing an almost holomorphic extension $\widetilde{a}\in C^{\infty}(\comp^{2n})$ of $a\in S(\Lambda_{\Phi_0})$, such that
$\partial^{\alpha} \widetilde{a}\in L^{\infty}(\comp^{2n})$ for all $\alpha$, and with the property ${\rm supp}\, \widetilde{a} \subset \Lambda_{\Phi_0} + B_{{\bf C}^{2n}}(0,\widetilde{C})$, for some $\widetilde{C}>0$. One then performs a contour deformation argument, letting $\Gamma^t(x) \subset \comp^{2n}_{y,\theta}$, $t\in [0,1]$, be the contour given by
\begin{equation}
\label{psg.2}
\theta=\frac{2}{i}\frac{\partial \Phi_0}{\partial x}\left(\frac{x+y}{2}\right) + i t \overline{(x-y)},\,\,\, \quad y\in \comp^n,
\end{equation}
and using Stokes' formula to get
\begeq
\label{psg.2.1}
a^{w}_{\Gamma}(x,hD_x)u = \widetilde{a}^{w}_{\Gamma^1}(x,hD_x)u + R u.
\endeq
Here
$$
\widetilde{a}^{w}_{\Gamma^1}(x,hD_x)u(x)=\frac{1}{(2\pi h)^n}\int\hskip-2mm\int_{\Gamma^1(x)} e^{\frac{i}{h}(x-y)\cdot \theta}
\widetilde{a}\left(\frac{x+y}{2},\theta;h\right) u(y)\, dy \wedge\, d\theta,
$$
and writing
\begeq
\label{psg.2.2}
\widetilde{a}^{w}_{\Gamma^1}(x,hD_x)u(x) = \int k_{\Gamma^1}(x,y;h) u(y)\,L(dy),
\endeq
we see, using (\ref{psg.1.1.1}) and (\ref{psg.2}), that the effective kernel $e^{-\frac{\Phi_0(x)}{h}}  k_{\Gamma^1}(x,y;h) e^{\frac{\Phi_0(y)}{h}}$ of the operator $\widetilde{a}^{w}_{\Gamma^1}(x,hD_x)$ satisfies
\begin{equation}
\label{psg.2.5}
e^{-\frac{\Phi_0(x)}{h}}  k_{\Gamma^1}(x,y;h) e^{\frac{\Phi_0(y)}{h}} = \mathcal{O}(1)h^{-n} e^{-\frac{1}{h}|x-y|^2}.
\end{equation}
Therefore, by Schur's lemma, we get
\begeq
\label{psg.2.6}
\widetilde{a}^{w}_{\Gamma^1}(x,hD_x) = {\cal O}(1): L^2(\comp^n, e^{-2\Phi_0/h}L(dx)) \rightarrow L^2(\comp^n, e^{-2\Phi_0/h}L(dx)),
\endeq
and it is shown in \cite[Proposition 1.2]{Sj96},~\cite[Section 1.4.]{HiSj15} that the remainder $R$ in (\ref{psg.2.1}) satisfies
\begeq
\label{psg.2.7}
R = {\cal O}(h^{\infty}): H_{\Phi_0}(\comp^n) \rightarrow L^2(\comp^n, e^{-2\Phi_0/h}L(dx)).
\endeq

\bigskip
\noindent
We are now ready to start the discussion of the Gevrey case. When doing so, let us notice first that identifying $\Lambda_{\Phi_0}$ linearly with $\comp^n_x$, via the projection map $\Lambda_{\Phi_0}\ni (x,\xi) \mapsto x\in \comp^n_x$, we may define the Gevrey spaces ${\cal G}^s(\Lambda_{\Phi_0})$, ${\cal G}^s_0(\Lambda_{\Phi_0})$, and ${\cal G}^s_b(\Lambda_{\Phi_0})$, for $s>1$.

\medskip
\noindent
Given $a\in {\cal G}^s_b(\Lambda_{\Phi_0})\subset S(\Lambda_{\Phi_0})$, for some $s>1$, we would like to establish an analogue of (\ref{psg.2.1}), (\ref{psg.2.6}), (\ref{psg.2.7}), replacing the deformed contour $\Gamma^1(x)$ in (\ref{psg.2}) by another one, if necessary, where we expect the Gevrey smoothness of $a$ to allow us to strengthen (\ref{psg.2.7}) to the estimate
\begeq
\label{psg.2.8}
R = {\cal O}(1)\,\exp\left(-\frac{1}{\mathcal{O}(1)}h^{-\frac{1}{s}}\right): H_{\Phi_0}(\comp^n) \rightarrow L^2(\comp^n, e^{-2\Phi_0/h}L(dx)).
\endeq
Specifically, we would like the effective kernel of the remainder to be
\begeq
\label{psg.2.9}
\mathcal{O}(h^{-n})\, \exp\left(-\frac{1}{\mathcal{O}(1)}h^{-\frac{1}{s}}\right).
\endeq
The motivation for such a decay estimate, as $h\rightarrow 0^+$, comes from the characterization of the space ${\cal G}^s_0(\real^d)$ via decay properties of the Fourier transforms, see Subsection \ref{Ftf}.

\bigskip
\noindent
Let $\widetilde{a}\in {\cal G}^s_b(\comp^{2n})$ be an almost holomorphic extension of $a\in {\cal G}^s_b(\Lambda_{\Phi_0})$, such that ${\rm supp}\, \widetilde{a} \subset \Lambda_{\Phi_0} + B_{{\bf C}^{2n}}(0,\widetilde{C})$, for some $\widetilde{C}>0$. The existence of such an extension has been established in Section \ref{ahol}, and we have the following natural analogue of \eqref{ahol.3},
\begin{equation}
\label{psg.4.7}
\abs{\overline{\partial} \widetilde{a}(x,\xi)} \leq
\mathcal{O}(1)\, \exp\left(-\frac{1}{\mathcal{O}(1)}\textrm{dist}\big((x,\xi),\Lambda_{\Phi_0}\big)^{-\frac{1}{s-1}}\right),\quad (x,\xi)\in \comp^{2n}.
\end{equation}
Proceeding similarly to the $C^{\infty}$ case, let us first perform a contour deformation to the contour $\Gamma^1(x)$ given in (\ref{psg.2}). We then still have (\ref{psg.2.1}), (\ref{psg.2.6}), and we only need to take a closer look at the remainder $R$ in (\ref{psg.2.1}), making use of the full strength of (\ref{psg.4.7}).

\medskip
\noindent
Stokes' formula gives that
\begeq
\label{psg.4.7.0}
Ru(x) = \frac{1}{(2\pi h)^n}\int\!\!\!\int\!\!\!\int_{G_{[0,1]}(x)}
e^{\frac{i}{h}(x-y)\cdot \theta}  u(y)\, \overline{\partial}\left(\widetilde{a}\left(\frac{x+y}{2},\theta\right)\right)\wedge \, dy \wedge\, d\theta,
\endeq
where $G_{[0,1]}(x)\subset \comp^n_y \times \comp^n_{\theta}$ is the $(2n+1)-$dimensional contour, given by \eqref{psg.2}, parametrized by $(t,y)\in [0,1]\times \comp^n$. Along $G_{[0,1]}(x)$, we have for $1\leq j \leq n$,
\begin{multline}
d\theta_j = \frac{2}{i}\sum_{k=1}^{n} \Phi''_{0,x_jx_k} \frac{1}{2} dy_k
+\frac{2}{i}\sum_{k=1}^{n} \Phi''_{0,x_j\overline{x_k}} \frac{1}{2} d\overline{y_k} -it d\overline{y_j} + i\overline{(x_j-y_j)} dt\\
= \sum_{k=1}^{n} {\cal O}(1) dy_k + \sum_{k=1}^{n} {\cal O}(1) d\overline{y_k} + i\overline{(x_j-y_j)} dt,
\end{multline}
and when computing $\overline{\partial}\left(\widetilde{a}(\frac{x+y}{2},\theta)\right)\wedge\, dy \wedge\, d\theta$, all the terms have to contain precisely one factor of $dt$. This form can therefore be expressed as $|x-y|{\cal O}(1) L(dy)dt$, and using (\ref{psg.1.1.1}), (\ref{psg.2}), and (\ref{psg.4.7}), we see that the expression in \eqref{psg.4.7.0} takes the form
\begeq
\label{psg.4.7.01}
Ru(x) = {\cal O}(1)\,h^{-n}
\int_0^1 dt \int_{{\bf C}^n} e^{\frac{1}{h} \left(\Phi_0(x) - \Phi_0(y) - t\abs{x-y}^2\right)} e^{-C_1\big(t\abs{x-y}\big)^{-\frac{1}{s-1}}} |x-y| u(y)\, L(dy),
\endeq
for some $C_1 > 0$. Here we have also used that along $G_{[0,1]}(x)$, we have
\begin{equation*}
\textrm{dist}\left(\Big(\frac{x+y}{2},\theta\Big),\Lambda_{\Phi_0}\right)=\mathcal{O}(1)t\abs{x-y}.
\end{equation*}
Writing
$$
Ru(x) = \int r(x,y;h) u(y)\, L(dy),
$$
we obtain that the effective kernel $e^{-\frac{\Phi_0(x)}{h}}  r(x,y;h) e^{\frac{\Phi_0(y)}{h}}$ of the operator $R$ in (\ref{psg.4.7.01}) satisfies
\begin{multline}
\label{psg.4.7.02}
e^{-\frac{\Phi_0(x)}{h}}  r(x,y;h) e^{\frac{\Phi_0(y)}{h}} = {\cal O}(1)\, h^{-n} \int_0^1
e^{- C_1(t\abs{x-y})^{-\frac{1}{s-1}}} e^{-\frac{t}{h} \abs{x-y}^2} |x-y|\, dt \\
\leq {\cal O}(1)\, h^{-n} \int_0^1 t^{-1/2} e^{- C_1(t\abs{x-y})^{-\frac{1}{s-1}}} e^{-\frac{t}{2h} \abs{x-y}^2}\, dt \\
\leq {\cal O}(1)\, h^{-n} \sup_{t\in[0,1]}
\left(\exp\left(-\frac{C}{h}(t\abs{x-y})^2 - C_1(t\abs{x-y})^{-\frac{1}{s-1}}\right)\right).
\end{multline}
Here $C$, $C_1>0$.

\medskip
\noindent
Setting
\begin{equation}
\label{psg.4.7.1}
g(\sigma)=\frac{C}{h}\sigma^2+C_1 \sigma^{-\frac{1}{s-1}},\quad{\mathrm{for}}\,\,\,\sigma >0,
\end{equation}
we can rewrite (\ref{psg.4.7.02}) as follows,
\begeq
\label{psg.4.7.1.1}
e^{-\frac{\Phi_0(x)}{h}}  r(x,y;h) e^{\frac{\Phi_0(y)}{h}} \leq \mathcal{O}(1)\, h^{-n}\,
\exp\left(-\inf_{\sigma>0} g(\sigma)\right).
\endeq
A straightforward computation shows that the infimum of $g$ over the positive half axis is attained at the unique point
\begin{equation}
\label{psg.4.7.2}
\sigma_{\textrm{min}}  = \left(\frac{C_1 h}{2C (s-1)}\right)^{\frac{s-1}{2s-1}},
\end{equation}
and the corresponding value of the infimum is equal to
\begeq
\label{psg.4.7.2.1}
\inf_{\sigma>0}g(\sigma)= \frac{C}{h}\sigma_{\min}^2+C_1 \sigma_{\min}^{-\frac{1}{s-1}}
=\frac{1}{\mathcal{O}(1)}h^{-\frac{1}{2s-1}}.
\endeq
We get, using (\ref{psg.4.7.1.1}) and (\ref{psg.4.7.2.1}),
\begin{equation}
\label{psg.4.7.3}
e^{-\frac{\Phi_0(x)}{h}}  r(x,y;h) e^{\frac{\Phi_0(y)}{h}} \leq \mathcal{O}(h^{-n})\, \exp\left(-\frac{1}{\mathcal{O}(1)}h^{-\frac{1}{2s-1}}\right),
\end{equation}
which is a strictly larger upper bound that the desired one in (\ref{psg.2.9}), for all $s>1$. We may therefore regard the discussion above as an indication of the fact that the deformed contour $\Gamma^1(x)$ in (\ref{psg.2}), natural in the analytic case~\cite{Sj82},~\cite{Sj96},~\cite{HiSj15}, is not quite adapted to the Gevrey theory.

\bigskip
\noindent
As a new attempt, we shall now consider the following piecewise smooth Lipschitz "mixed" contour $\Gamma_{\omega}(x) \subset \comp^{2n}_{y,\theta}$, defined as follows,
\begin{equation}
\label{psg.4}
{\Gamma_\omega}(x): \quad \theta=\frac{2}{i}\frac{\partial \Phi_0}{\partial x}\left(\frac{x+y}{2}\right)
 + if_\omega(x-y),\quad y\in \comp^n,
\end{equation}
with
\begin{equation}
\label{psg.4.5}
f_\omega({z})= \begin{cases}
                           \hskip10pt\overline{{z}}, \quad \, |{z}|\leq \omega,\\
                           {}\\
                           \displaystyle \omega\frac{\overline{{z}}}{|{z}|}, \quad |{z}| > \omega.
                          \end{cases}
\end{equation}
Here $0 < \omega < \sigma_{\textrm{min}}$ is to be chosen, with $\sigma_{\textrm{min}}$ given in (\ref{psg.4.7.2}).

\medskip
\noindent
In view of (\ref{psg.1.1.1}), (\ref{psg.4}), and (\ref{psg.4.5}), we have along ${\Gamma_\omega}(x)$,
\begin{equation}
\label{psg.5}
\textrm{Re}\left(i(x-y)\cdot \theta\right) + \Phi_0(y)-\Phi_0(x) = -{\rm Re}\, \left((x-y)\cdot f_{\omega}(x-y)\right) = - F_{\omega}(x-y),
\end{equation}
where
\begin{equation}
\label{psg.5.5}
0 \leq F_\omega({z})= \begin{cases}
                           \hskip5pt\vert{{z}}\vert^2, \quad \, |{z}|\leq \omega, \\
                           {}\\
                           \displaystyle \omega\vert{z}\vert, \quad\, |{z}| > \omega.
                          \end{cases}
\end{equation}

\bigskip
\noindent
The $2n$--dimensional contours $\Gamma(x)$ in (\ref{psg.1.1}) and $\Gamma_{\omega}(x)$ in (\ref{psg.4}) are homotopic, with the homotopy given by the family of contours,
\begin{equation}
\label{psg.5.51}
{\Gamma_\omega}(x,t): \quad \theta=\frac{2}{i}\frac{\partial \Phi_0}{\partial x}\left(\frac{x+y}{2}\right)
 + it f_\omega(x-y), \quad y\in \comp^n,
 \end{equation}
for $t\in [0,1]$. Let also $G_{[0,1],\omega}(x) \subset \comp^{2n}_{y,\theta}$ be the $(2n+1)$--dimensional contour given by (\ref{psg.5.51}),
parametrized by $(t,y)\in [0,1]\times \comp^n$. When $u\in {\rm Hol}(\comp^n)$ is such that $u(x)=\mathcal{O}_{h,N}(1) \langle x \rangle^{-N} e^{\frac{\Phi_0(x)}{h}}$, for all $N\geq 0$, we have, similarly to (\ref{psg.2.1}), by an application of Stokes' formula,
\begeq
\label{psg.5.6}
a^{w}_{\Gamma}(x,hD_x)u = \widetilde{a}^{w}_{\Gamma_{\omega}}(x,hD_x)u + R u.
\endeq
Here
\begeq
\label{psg.5.7}
\widetilde{a}^{w}_{\Gamma_{\omega}}(x,hD_x)u(x)=\frac{1}{(2\pi h)^n}\int\hskip-2mm\int_{\Gamma_{\omega}(x)} e^{\frac{i}{h}(x-y)\cdot \theta}
\widetilde{a}\left(\frac{x+y}{2},\theta;h\right) u(y)\, dy \wedge\, d\theta,
\endeq
and
\begeq
\label{psg.5.8}
Ru(x) = \frac{1}{(2\pi h)^n}\int\!\!\!\int\!\!\!\int_{G_{[0,1],\omega}(x)}
e^{\frac{i}{h}(x-y)\cdot \theta}  u(y) \, \overline{\partial}\left(\widetilde{a}\left(\frac{x+y}{2},\theta\right)\right)\wedge \, dy \wedge\, d\theta.
\endeq
We shall now estimate the effective kernel of the operator $R$ in (\ref{psg.5.8}). When doing so, we notice that along $G_{[0,1],\omega}(x)$, we have in view of (\ref{psg.5.51}),
\begeq
\label{psg.5.9}
\textrm{dist}\left(\left(\frac{x+y}{2},\theta\right),\Lambda_{\Phi_0}\right) \leq {\cal O}(1) t \abs{f_{\omega}(x-y)},
\endeq
and using also (\ref{psg.1.1.1}), (\ref{psg.5}), (\ref{psg.4.7}), and (\ref{psg.5.9}), we conclude that, similarly to (\ref{psg.4.7.01}), we can write
\begeq
\label{psg.5.9.1}
Ru(x) = {\cal O}(1)\,h^{-n}
\int_0^1 dt \int_{{\bf C}^n} e^{\frac{1}{h} \left(\Phi_0(x) - \Phi_0(y) - tF_{\omega}(x-y)\right)}
e^{-C_1\big(t\abs{f_{\omega}(x-y)}\big)^{-\frac{1}{s-1}}} u(y)\, L(dy),
\endeq
for some $C_1 > 0$. Setting
$$
Ru(x) = \int r(x,y;h)u(y)\, L(dy),
$$
we obtain from (\ref{psg.5.9.1}), (\ref{psg.4.5}), and (\ref{psg.5.5}) that the absolute value of the effective kernel $e^{-\Phi_0(x)/h} r(x,y;h) e^{\Phi_0(y)/h}$ of the operator $R$ in (\ref{psg.5.8}) does not exceed
\begin{equation}
\label{kernel}
\begin{aligned}
\mathcal{O}(1)\, h^{-n}\, \sup_{t\in [0,1]}
\begin{cases}
\exp\left(-\frac{C}{h}(t|x-y|)^2 - C_1(t\,\vert x-y\vert)^{-\frac{1}{s-1}}\right),
\,\,\,|x-y|\leq \omega,\,\,\, \\
{}\\
\exp\left(-\frac{C}{h}\omega\, t\,  |x-y| - C_1(t\, \omega)^{-\frac{1}{s-1}}\right),
\qquad \,\,\,\,|x-y| > \omega.\,\,
\end{cases}
\end{aligned}
\end{equation}
 Here $C$, $C_1>0$.

\bigskip
\noindent
We shall now discuss the choice of the parameter $0 < \omega < \sigma_{\rm min}$ in (\ref{psg.4}), (\ref{psg.4.5}), and here our goal is to achieve an upper bound of the form (\ref{psg.2.9}) for (\ref{kernel}). First, in the region $\abs{x-y}\leq \omega$, we have in view of (\ref{kernel}),
$$
e^{-\Phi_0(x)/h} r(x,y;h) e^{\Phi_0(y)/h} \leq \mathcal{O}(1) h^{-n}\, \exp\left(-\inf_{t\in [0,1]} g(t\abs{x-y})\right),
$$
where the function $g$ has been defined in (\ref{psg.4.7.1}). Since $g$ is decreasing on the interval $(0,\sigma_{\rm min})$, it suffices to choose $0 < \omega < \sigma_{\rm min}$ so that
\begeq
\label{psg.5.9.2}
g(\omega)\ge \frac{1}{\mathcal{O}(1)}h^{-\frac{1}{s}}.
\endeq
To this end, recalling (\ref{psg.4.7.1}), let us choose $0 < \omega$ such that $\omega^{-\frac{1}{s-1}}=\displaystyle \frac{1}{\mathcal{O}(1)} h^{-\frac{1}{s}}$, i.e.,
\begin{equation}
\label{psg.7}
\omega = \frac{1}{C_0} h^{1-\frac{1}{s}} \ll \sigma_{\rm min}.
\end{equation}
The choice (\ref{psg.7}) assures that (\ref{psg.5.9.2}) holds, and let us also notice that the first term in the expression for $g(\omega)$ satisfies
$$
\frac{C}{h} \omega^2 \ll h^{-\frac{1}{s}},
$$
so that
$$
g(\omega) \asymp h^{-\frac{1}{s}}.
$$
Here and in what follows we write $A\asymp B$ for $A,B\in \real$ if $A,\, B$ have the same sign (or vanish), and we have $A=\mathcal{O}(B)$ and $B=\mathcal{O}(A)$.

\medskip
\noindent
We conclude therefore that in the region $\abs{x-y}\leq \omega$, with $\omega$ given in (\ref{psg.7}), we have
\begin{multline}
\label{psg.5.9.3}
e^{-\Phi_0(x)/h} r(x,y;h) e^{\Phi_0(y)/h}
\leq \mathcal{O}(1) h^{-n}\, \exp\left(-g(\abs{x-y})\right) \\
\leq \mathcal{O}(1) h^{-n}\, \exp\left(-\frac{1}{{\cal O}(1)} h^{-\frac{1}{s}}\right).
\end{multline}
Next, a straightforward computation shows that in the region
\begeq
\label{psg.5.9.4}
\omega < \abs{x-y} \leq \frac{C_1}{C(s-1)} C_0^{\frac{s}{s-1}},
\endeq
the function
\begeq
\label{psg.5.9.5}
[0,1]\ni t \mapsto \frac{C}{h}\omega t\abs{x-y} + C_1 \left(t\omega\right)^{-\frac{1}{s-1}}
\endeq
is decreasing, and therefore using (\ref{kernel}) we obtain in the region (\ref{psg.5.9.4}),
\begin{multline}
\label{psg.5.9.6}
e^{-\Phi_0(x)/h} r(x,y;h) e^{\Phi_0(y)/h}
\leq \mathcal{O}(1) h^{-n}\, \exp\left(-\frac{C}{h}\omega\abs{x-y} - C_1 \omega^{-\frac{1}{s-1}}\right) \\
\leq \mathcal{O}(1) h^{-n}\, \exp\left(-\frac{1}{{\cal O}(1)} h^{-\frac{1}{s}}\right).
\end{multline}
Here we have also used (\ref{psg.7}). Finally, in the exterior region
\begeq
\label{psg.6.91}
\frac{C_1}{C(s-1)} C_0^{\frac{s}{s-1}} < \abs{x-y},
\endeq
the function in (\ref{psg.5.9.5}) achieves its infimum at the unique critical point
$$
t_{\rm min} = \left(\frac{C_1}{C(s-1)}\right)^{\frac{s-1}{s}} \frac{C_0}{\abs{x-y}^{(s-1)/s}} \in (0,1),
$$
and the corresponding critical value is of the form
\begeq
\label{psg.6.92}
\frac{1}{{\cal O}(1)} h^{-\frac{1}{s}} \abs{x-y}^{1/s}.
\endeq
We get therefore in the region (\ref{psg.6.91}),
\begin{multline}
\label{psg.6.93}
e^{-\Phi_0(x)/h} r(x,y;h) e^{\Phi_0(y)/h}
\leq \mathcal{O}(1) h^{-n}\, \exp\left(-\frac{1}{{\cal O}(1)} h^{-\frac{1}{s}} \abs{x-y}^{1/s}\right) \\
\leq \mathcal{O}(1) h^{-n}\, \exp\left(-\frac{1}{{\cal O}(1)} h^{-\frac{1}{s}}\right).
\end{multline}

\medskip
\noindent
Combining (\ref{psg.5.9.3}), (\ref{psg.5.9.6}), and (\ref{psg.6.93}), we conclude that the effective kernel of the operator $R$ in (\ref{psg.5.8}) obeys an upper bound of the form (\ref{psg.2.9}), provided that $\omega$ is chosen as in (\ref{psg.7}).

\bigskip
\noindent
It is now easy to derive precise bounds on the operator norms of the operators in (\ref{psg.5.7}) and (\ref{psg.5.8}), viewed as linear continuous maps on the $L^2$--space $L^2(\comp^n, e^{-2\Phi_0/h}L(dx))$. Indeed, an application of Schur's lemma together with (\ref{psg.5.9.3}), (\ref{psg.5.9.6}), and (\ref{psg.6.93}), shows first that the operator norm of $R$ in (\ref{psg.5.8}) does not exceed
\begin{multline}
\label{psg.6.94}
{\cal O}(1) h^{-n} \int_{\abs{x}\leq \omega} \exp(-g(\abs{x}))\, L(dx) + {\cal O}(1) h^{-n} \int_{\omega \leq \abs{x}\leq {\cal O}(1)}
\exp\left(- C_1 \omega^{-\frac{1}{s-1}}\right)\,L(dx) \\
+ {\cal O}(1) h^{-n} \int_{{\cal O}(1)\leq \abs{x}} \exp\left(-\frac{1}{{\cal O}(1)} h^{-\frac{1}{s}} \abs{x}^{1/s}\right)\, L(dx) = I_1 + I_2 + I_3,
\end{multline}
with the function $g$ defined in (\ref{psg.4.7.1}). Here we clearly have
$$
I_j = \mathcal{O}(1)\,\exp\left(-\frac{1}{\mathcal{O}(1)} h^{-\frac{1}{s}}\right),\quad j = 2,3,
$$
in view of (\ref{psg.7}), and when estimating the first contribution in (\ref{psg.6.94}), we obtain in view of (\ref{psg.5.9.3}),
$$
I_1 \leq {\cal O}(1) h^{-n} \exp\left(-\frac{1}{\mathcal{O}(1)} h^{-\frac{1}{s}}\right) \leq \mathcal{O}(1)\,
\exp\left(-\frac{1}{2\mathcal{O}(1)} h^{-\frac{1}{s}}\right).
$$
We get therefore,
\begeq
\label{psg.6.95}
R = \mathcal{O}(1)\,\exp\left(-\frac{1}{\mathcal{O}(1)} h^{-\frac{1}{s}}\right): L^2(\comp^n, e^{-2\Phi_0/h}L(dx)) \rightarrow L^2(\comp^n, e^{-2\Phi_0/h}L(dx)).
\endeq

\medskip
\noindent
Next, turning the attention to the operator $\widetilde{a}^w_{\Gamma_{\omega}}(x,hD_x)$ in (\ref{psg.5.7}), and writing
$$
\widetilde{a}^w_{\Gamma_{\omega}}(x,hD_x) u(x) = \int k_{\Gamma_{\omega}}(x,y;h)\, L(dx),
$$
we get in view of (\ref{psg.5}),
\begeq
\label{psg.6.96}
e^{-\Phi_0(x)/h} k_{\Gamma_{\omega}}(x,y;h) e^{\Phi_0(y)/h} \leq {\cal O}(1) h^{-n} \exp\left(-F_{\omega}(x-y)/h\right).
\endeq
Recalling (\ref{psg.5.5}), in view of Schur's lemma, we only have to control the $L^1$ norm
\begin{multline}
\label{psg.6.97}
{\cal O}(1) h^{-n} \int \exp\left(-\frac{F_{\omega}(x)}{h}\right)\, L(dx) \\
\leq {\cal O}(1) h^{-n} \int_{\abs{x}\leq \omega} \exp\left(-\frac{\abs{x}^2}{h}\right)\, L(dx) + {\cal O}(1) h^{-n}
\int_{\abs{x}\geq \omega} \exp\left(-\frac{\omega \abs{x}}{h}\right)\, L(dx) \\
= {\cal O}(1) + {\cal O}(1) \frac{h^n}{\omega^{2n}} = {\cal O}(1) + {\cal O}(1) h^{-n\left(1-\frac{2}{s}\right)}.
\end{multline}
We conclude that
\begeq
\label{psg.6.98}
\widetilde{a}^w_{\Gamma_{\omega}}(x,hD_x) = {\cal O}(1) {\rm max}\, \left(1, h^{-n\left(1-\frac{2}{s}\right)}\right): L^2(\comp^n, e^{-2\Phi_0/h}L(dx)) \rightarrow L^2(\comp^n, e^{-2\Phi_0/h}L(dx)),
\endeq
and in particular, this operator is ${\cal O}(1)$ precisely when $1< s\leq 2$.

\bigskip
\noindent
We may summarize the discussion above in the following theorem, which is the main result of this subsection.
\begin{theo}
\label{main}
Let $\Phi_0$ be a strictly plurisubharmonic quadratic form on $\comp^n$ and let $a\in {\cal G}^s_b(\Lambda_{\Phi_0})$, for some $s>1$. Let $\widetilde{a} \in {\cal G}^s_b(\comp^{2n})$ be an almost holomorphic extension of $a$ such that ${\rm supp}\, \widetilde{a} \subset \Lambda_{\Phi_0} + B_{{\bf C}^{2n}}(0,C)$, for some $C>0$, or more generally, let $\widetilde{a} \in C^1_b(\comp^{2n})$ be an extension of $a$ with the same support properties, such that {\rm (\ref{psg.4.7})} holds. Let furthermore $\Gamma_{\omega}(x) \subset \comp^{2n}_{y,\theta}$ be the piecewise smooth Lipschitz contour given in {\rm (\ref{psg.4})}, {\rm (\ref{psg.4.5})}, where $0 < \omega$ satisfies {\rm (\ref{psg.7})}. We have
\begin{equation}
\label{psg.10}
a^{w}_{\Gamma}(x,hD_x) = \widetilde{a}^{w}_{{\Gamma_\omega}}(x,hD_x) + R,
\end{equation}
where the operator $\widetilde{a}^w_{{\Gamma_\omega}}(x,hD_x)$ in {\rm (\ref{psg.5.7})} satisfies
\begeq
\label{psg.10.1}
\widetilde{a}^w_{\Gamma_{\omega}}(x,hD_x) = {\cal O}(1) {\rm max}\, \left(1, h^{-n\left(1-\frac{2}{s}\right)}\right): H_{\Phi_0}(\comp^n) \rightarrow L^2(\comp^n, e^{-2\Phi_0/h}L(dx)),
\endeq
and
\begeq
\label{psg.10.2}
R =\mathcal{O}(1)\,\exp\left(-\frac{1}{\mathcal{O}(1)} h^{-\frac{1}{s}}\right): L^2(\comp^n, e^{-2\Phi_0/h}L(dx)) \rightarrow L^2(\comp^n, e^{-2\Phi_0/h}L(dx)).
\endeq
\end{theo}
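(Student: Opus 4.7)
The plan is to derive the splitting (\ref{psg.10}) together with the bounds (\ref{psg.10.1}) and (\ref{psg.10.2}) by a contour deformation argument based on Stokes' formula, combined with Schur-type bounds for the deformed operator and for the remainder. First I would homotope the standard contour $\Gamma(x)$ from (\ref{psg.1.1}) to the Lipschitz contour $\Gamma_\omega(x)$ from (\ref{psg.4})--(\ref{psg.4.5}) via the one-parameter family $\Gamma_\omega(x,t)$ in (\ref{psg.5.51}). Since $\widetilde{a}$ has compact support transverse to $\Lambda_{\Phi_0}$ and $u$ decays rapidly relative to the weight, Stokes' formula applied to the $(2n+1)$-chain $G_{[0,1],\omega}(x)$ produces the decomposition (\ref{psg.5.6}) with $R$ given by the integral (\ref{psg.5.8}), whose integrand carries the extra factor $\overline{\partial}\widetilde{a}$ coming from the lack of holomorphy of the extension.

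Next I would estimate the effective kernel of $R$, conjugating its integral kernel by $e^{\mp\Phi_0/h}$. Along $\Gamma_\omega(x,t)$ the Taylor identity (\ref{psg.1.1.1}) together with the definition (\ref{psg.4.5}) of $f_\omega$ gives the phase factor $\exp(-tF_\omega(x-y)/h)$ via (\ref{psg.5}). Combining this with the Gevrey almost-holomorphic bound (\ref{psg.4.7}) and the distance estimate (\ref{psg.5.9}) yields the two-regime bound (\ref{kernel}). The key idea is then to balance the two competing exponentials in $t$: the phase decay and the $\overline{\partial}\widetilde{a}$ Gevrey decay. Optimizing over $\omega$ the function $g$ in (\ref{psg.4.7.1}) forces the critical scale $\omega\sim h^{1-1/s}$, which is precisely (\ref{psg.7}), and makes $g(\omega)\asymp h^{-1/s}$. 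Splitting into the three regions $|x-y|\leq \omega$, $\omega<|x-y|\leq \mathcal{O}(1)$, and $|x-y|>\mathcal{O}(1)$, I would analyze $\inf_{t\in[0,1]}$ separately: in the first two regions monotonicity in $t$ locates the infimum at $t=1$, while in the exterior region there is an interior critical point producing the decay rate $h^{-1/s}|x-y|^{1/s}$. In each region the effective kernel is bounded by $\mathcal{O}(h^{-n})\exp(-h^{-1/s}/\mathcal{O}(1))$, and an integration of this bound in $x-y$ together with Schur's lemma furnishes (\ref{psg.10.2}).

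For the principal term $\widetilde{a}^w_{\Gamma_\omega}$, the phase identity (\ref{psg.5}) directly bounds the effective kernel by $\mathcal{O}(h^{-n})\exp(-F_\omega(x-y)/h)$. Changing variables $z=x-y$, the $L^1$ mass splits into a Gaussian contribution on $|z|\leq\omega$, which is $\mathcal{O}(1)$, and an exponential contribution on $|z|>\omega$, which is $\mathcal{O}(h^n/\omega^{2n})=\mathcal{O}(h^{-n(1-2/s)})$ by (\ref{psg.7}); Schur's lemma then yields (\ref{psg.10.1}), and in particular gives a genuine $\mathcal{O}(1)$ bound precisely in the range $1<s\leq 2$. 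The main obstacle is the remainder estimate: one has to verify that the joint optimization of the phase decay against the $\overline{\partial}\widetilde{a}$ Gevrey decay really delivers the uniform rate $h^{-1/s}$ across all three regions of $|x-y|$, and this is exactly where the Lipschitz cutoff at scale $\omega$ in the definition of $f_\omega$ pays off, as the naive contour $\Gamma^1(x)$ from (\ref{psg.2}) only produces the weaker rate $h^{-1/(2s-1)}$ exhibited in (\ref{psg.4.7.3}).
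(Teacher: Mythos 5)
Your proposal reproduces the paper's own argument essentially step for step: the Stokes deformation from $\Gamma$ to $\Gamma_\omega$ along the family $\Gamma_\omega(x,t)$ giving (\ref{psg.5.6}) with remainder (\ref{psg.5.8}); the effective-kernel bound (\ref{kernel}) obtained by combining (\ref{psg.1.1.1}), (\ref{psg.5}), (\ref{psg.5.9}) with the Gevrey almost-holomorphy bound (\ref{psg.4.7}); the choice $\omega\asymp h^{1-1/s}$ fixed by requiring $g(\omega)\gtrsim h^{-1/s}$ for the auxiliary function $g$ in (\ref{psg.4.7.1}); the three-region split in $|x-y|$ with monotonicity in $t$ for the first two regions and the interior critical point giving the extra $|x-y|^{1/s}$ decay in the exterior (which is what makes the $L^1$ integral for Schur's lemma finite); and the $L^1$ computation (\ref{psg.6.97}) for the principal term yielding $\max(1,h^{-n(1-2/s)})$. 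This matches Subsection \ref{cds} of the paper exactly, so there is nothing substantive to add.
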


\medskip
\noindent
Recalling the approximate uniqueness of almost holomorphic extensions, see (\ref{ahol.4}), (\ref{uni.34}), we also get the following result.

\begin{coro}(Dependence on the choice of an almost holomorphic extension.)
\label{dce}
Let $\Phi_0$ be a strictly plurisubharmonic quadratic form on $\comp^n$ and let $a\in {\cal G}^s_b(\Lambda_{\Phi_0})$, for some $s>1$.
Let $\widetilde{a}_1$, $\widetilde{a}_2 \in C^{\infty}_b(\comp^{2n})$ be two almost holomorphic extensions of $a$, such that for $j=1,2$, we have ${\rm supp}\, \widetilde{a_j} \subset \Lambda_{\Phi_0} + B_{{\bf C}^{2n}}(0,C)$, for some $C>0$, and also,
\begin{equation}
\label{psg.10.3}
\abs{\partial^{\alpha} \overline{\partial}^{\beta} \overline{\partial} \widetilde{a}_j(\rho)} \leq
\mathcal{O}(1)\, \exp\left(-\frac{1}{\mathcal{O}(1)}{\rm{dist}}\big(\rho,\Lambda_{\Phi_0}\big)^{-\frac{1}{s-1}}\right),\quad \rho \in \comp^{2n},
\end{equation}
for $\abs{\alpha} + \abs{\beta}\leq 2$. Then we have
\begin{equation}
\label{psg.11}
(\widetilde{a}_1)^{w}_{{\Gamma_\omega}}(x,hD_x) = (\widetilde{a}_2)^{w}_{{\Gamma_\omega}}(x,hD_x) + R,
\end{equation}
where
$$
(\widetilde{a}_j)^{w}_{{\Gamma_\omega}}(x,hD_x) = \mathcal{O}(1)\max\left(1,h^{-n(1-\frac{2}{s})}\right): H_{\Phi_0}(\comp^n) \rightarrow L^2(\comp^n, e^{-2\Phi_0/h}L(dx)),
$$
for $j=1,2$, and
$$
R = \mathcal{O}(1)\, \exp\left(-\frac{1}{\mathcal{O}(1)} h^{-\frac{1}{s}}\right): L^2(\comp^n, e^{-2\Phi_0/h}L(dx)) \rightarrow L^2(\comp^n, e^{-2\Phi_0/h}L(dx)).
$$
\end{coro}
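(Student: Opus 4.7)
The plan is to reduce the corollary directly to Theorem \ref{main} applied separately to each of the two extensions. The hypothesis (\ref{psg.10.3}) specialized to $|\alpha|=|\beta|=0$ is exactly the pointwise $\overline{\partial}$-bound (\ref{psg.4.7}) required by the ``more generally'' clause of Theorem \ref{main}, and both $\widetilde{a}_1, \widetilde{a}_2$ already share the same support property. Hence, for $j=1,2$, Theorem \ref{main} delivers a decomposition
\begin{equation*}
a^{w}_{\Gamma}(x,hD_x) = (\widetilde{a}_j)^{w}_{\Gamma_\omega}(x,hD_x) + R_j,
\end{equation*}
with $(\widetilde{a}_j)^w_{\Gamma_\omega}(x,hD_x) = \mathcal{O}(1)\max(1, h^{-n(1-2/s)}): H_{\Phi_0}(\comp^n) \to L^2(\comp^n, e^{-2\Phi_0/h} L(dx))$ and with $R_j$ bounded on $L^2(\comp^n, e^{-2\Phi_0/h}L(dx))$ by $\mathcal{O}(1)\exp(-h^{-1/s}/\mathcal{O}(1))$.

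Subtracting the two resulting identities cancels the common left-hand side $a^w_\Gamma(x,hD_x)$ and produces
\begin{equation*}
(\widetilde{a}_1)^w_{\Gamma_\omega}(x,hD_x) - (\widetilde{a}_2)^w_{\Gamma_\omega}(x,hD_x) = R_2 - R_1.
\end{equation*}
The triangle inequality on $L^2(\comp^n, e^{-2\Phi_0/h}L(dx))$ applied to the right-hand side immediately yields the exponentially small bound $\mathcal{O}(1)\exp(-h^{-1/s}/\mathcal{O}(1))$ claimed for the remainder in (\ref{psg.11}), while the mapping properties of each $(\widetilde{a}_j)^w_{\Gamma_\omega}$ are read off directly from (\ref{psg.10.1}).

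There is essentially no obstacle in this approach once Theorem \ref{main} is in hand: the entire proof is a one-line subtraction. The only point worth underlining is that one needs the flexibility of Theorem \ref{main}, namely its applicability to any $C^1_b$-extension satisfying the $\overline{\partial}$-estimate (\ref{psg.4.7}), rather than only to $\mathcal{G}^s_b(\comp^{2n})$-extensions; this is used twice, once for $\widetilde{a}_1$ and once for $\widetilde{a}_2$. A more self-contained but heavier alternative would be to estimate the kernel of $(\widetilde{a}_1 - \widetilde{a}_2)^w_{\Gamma_\omega}$ head-on, using the approximate uniqueness (\ref{ahol.4}) (or Theorem \ref{uni2}) to bound $|\widetilde{a}_1-\widetilde{a}_2|$ along $\Gamma_\omega(x)$ by $\exp(-\mathrm{dist}(\cdot,\Lambda_{\Phi_0})^{-1/(s-1)}/\mathcal{O}(1)) \lesssim \exp(-\omega^{-1/(s-1)}/\mathcal{O}(1)) = \exp(-h^{-1/s}/\mathcal{O}(1))$, and then repeating the region decomposition $|x-y| \leq \omega$ versus $|x-y| > \omega$ of the proof of Theorem \ref{main}; the subtraction route above makes this redundant.
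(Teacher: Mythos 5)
Your one-line subtraction is appealing, but it does not yield what the corollary asserts. The identity $a^w_\Gamma(x,hD_x) = (\widetilde{a}_j)^w_{\Gamma_\omega}(x,hD_x) + R_j$ provided by Theorem \ref{main} comes from Stokes' formula applied to the form $e^{\frac{i}{h}(x-y)\cdot\theta}\,\widetilde{a}_j\left(\frac{x+y}{2},\theta\right) u(y)\, dy\wedge d\theta$, and for the bulk contribution to involve only $\overline{\partial}\widetilde{a}_j$ the function $u$ must be holomorphic; otherwise $\overline{\partial}u$ produces further terms. Hence (\ref{psg.10}) is an operator identity only on (a dense subspace of) $H_{\Phi_0}(\comp^n)$, not on $L^2(\comp^n, e^{-2\Phi_0/h}L(dx))$. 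Your subtraction therefore gives $(\widetilde{a}_1)^w_{\Gamma_\omega}(x,hD_x) - (\widetilde{a}_2)^w_{\Gamma_\omega}(x,hD_x) = R_2 - R_1$ only after restriction to $H_{\Phi_0}$, and what you conclude is the exponentially small bound for $R$ as a map $H_{\Phi_0}(\comp^n)\to L^2(\comp^n, e^{-2\Phi_0/h}L(dx))$. The corollary claims the bound on all of $L^2$, which is strictly stronger. The two integral operators $(\widetilde{a}_1)^w_{\Gamma_\omega}-(\widetilde{a}_2)^w_{\Gamma_\omega}$ and $R_2-R_1$ agree on the holomorphic subspace, but nothing forces their kernels, or their actions on the orthogonal complement, to coincide.

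A second, independent signal that the intended proof is different is that the hypothesis (\ref{psg.10.3}) imposes the $\overline{\partial}$--estimate on $\partial^\alpha\overline{\partial}^\beta\widetilde{a}_j$ for all $|\alpha|+|\beta|\leq 2$, while your argument never uses anything beyond $\alpha=\beta=0$. Those two extra orders of differentiability are exactly what Theorem \ref{uni2} requires, with $N_0=0$, to deliver the approximate uniqueness $|\widetilde{a}_1(\rho)-\widetilde{a}_2(\rho)|\leq \mathcal{O}(1)\exp\left(-\mathrm{dist}(\rho,\Lambda_{\Phi_0})^{-\frac{1}{s-1}}/\mathcal{O}(1)\right)$, up to a harmless power of the distance, and this is precisely the tool the paper cites immediately before the corollary. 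Along $\Gamma_\omega(x)$ the distance to $\Lambda_{\Phi_0}$ is at most $\omega\asymp h^{1-\frac{1}{s}}$, so the kernel of $(\widetilde{a}_1-\widetilde{a}_2)^w_{\Gamma_\omega}$ carries a uniform factor $\exp(-h^{-1/s}/\mathcal{O}(1))$, and the Schur estimate from (\ref{psg.6.97}) absorbs the polynomial loss $\max(1,h^{-n(1-\frac{2}{s})})$ into the exponential, yielding the $L^2\to L^2$ bound. This is exactly the route you describe in your final paragraph and then dismiss as redundant; it is not redundant --- it is the proof.
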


\bigskip
\noindent
{\it Remark}. We have, for $(y,\theta)\in \Gamma_{\omega}(x)$, in view of (\ref{psg.4}), (\ref{psg.4.5}),
$$
{\rm dist}\,\left(\left(\frac{x+y}{2},\theta\right),\Lambda_{\Phi_0}\right) \leq \omega = \frac{1}{C_0} h^{1 - \frac{1}{s}}.
$$
When working in the Gevrey category, we should therefore stay closer to the real domain $\Lambda_{\Phi_0}$, than in the analytic case, see (\ref{psg.2}).

\medskip
\noindent
{\it Remark}. As we saw in the beginning of this subsection, using analytic contours, such as $\Gamma^1(x)$ in (\ref{psg.2}), leads to estimates of the form (\ref{psg.4.7.3}) for the effective kernels of the remainders, that are not quite precise. Closely related to this observation is the phenomenon of the loss of Gevrey smoothness in stationary phase expansions, see~\cite{LaLa97},~\cite{Rouleux}. To illustrate it in a simple setting, let $a\in {\cal G}^s_0(\real^d)$, for some $s>1$. Arguing as in~\cite[Exercise 2.4]{GrSj}, we see that
there exists $C>0$ such that for all $N\in \nat$, we have
\begeq
\label{psg.11.01}
\frac{1}{(2\pi h)^{d/2}} \int e^{-\abs{x}^2/2h}\, a(x)\, dx = \sum_{j=0}^{N-1} \frac{h^j}{j!} \left(\frac{\Delta}{2}\right)^j a(0) + R_N(h),
\endeq
where
\begeq
\label{psg.11.02}
\abs{R_N(h)} \leq C^{N+1}(N!)^{2s-1} h^N.
\endeq
Choosing $N \sim (1/Ch)^{1/(2s-1)}$ leads to the remainder estimate of the form
$$
{\cal O}(1)\, \exp\left(-\frac{1}{{\cal O}(1)} h^{-\frac{1}{2s-1}}\right).
$$

\bigskip
\noindent
We shall finish this subsection by discussing the dependence of the realization $\widetilde{a}^w_{\Gamma_{\omega}}$
on the choice of the parameter $\omega$, such that $\omega \asymp \displaystyle h^{1 - \frac{1}{s}}$. To this end, let $0< \omega_j$, $j=1,2$, be such that
\begeq
\label{psg.11.1}
\omega_j \asymp h^{1-\frac{1}{s}}, \quad j=1,2,
\endeq
and let us introduce the natural homotopy between the contours $\Gamma_{\omega_1}$ and $\Gamma_{\omega_2}$, given by
\begin{equation*}
{\Gamma}_{\omega_t}(x):\quad \theta = \frac{2}{i}\frac{\partial \Phi_0}{\partial x}\left(\frac{x+y}{2}\right)
 + i\,f_{\omega_{t}}(x-y), \quad y\in \comp^n,
\end{equation*}
with $\omega_t=(1-t)\omega_1+t\omega_2$, $t\in[0,1]$. Introducing also the $(2n+1)$--dimensional contour $\underset{t\in[0,1]}\bigcup\Gamma_{\omega_t}(x)$ and applying Stokes' formula, we get
\begeq
\label{psg.12}
\widetilde{a}^w_{\Gamma_{\omega_1}}(x,hD_x)u = \widetilde{a}^w_{\Gamma_{\omega_2}}(x,hD_x)u + Ru.
\endeq
Here, similarly to (\ref{psg.5.9.1}), the remainder $R$ takes the form
\begeq
\label{psg.12.1}
Ru(x) = {\cal O}(1)\,h^{-n}
\int_0^1 dt \int_{{\bf C}^n} e^{\frac{1}{h} \left(\Phi_0(x) - \Phi_0(y) - F_{\omega_t}(x-y)\right)}
e^{-C_1 \abs{f_{\omega_t}(x-y)}^{-\frac{1}{s-1}}} u(y)\, L(dy),
\endeq
for some $C_1 > 0$. To control the norm of the operator
$$
R: L^2(\comp^n, e^{-2\Phi_0/h}L(dx)) \rightarrow L^2(\comp^n, e^{-2\Phi_0/h}L(dx)),
$$
in (\ref{psg.12.1}), it suffices, in view of Schur's lemma, to estimate the $L^1$ norm
\begeq
\label{psg.12.2}
{\cal O}(1)\,h^{-n} \int_0^1 dt \int e^{-\frac{F_{\omega_t}(x)}{h} - C_1 \abs{f_{\omega_t}(x)}^{-\frac{1}{s-1}}}\, L(dx) = I_1 + I_2.
\endeq
Here
\begeq
\label{psg.12.3}
I_1 = \mathcal{O}(1)h^{-n}\int_0^1 dt \int_{\abs{x} \leq \omega_t} \exp\left(-\frac{|x|^2}{h} - C_1 \abs{x}^{-\frac{1}{s-1}}\right)\, L(dx),
\endeq
and
\begeq
\label{psg.12.4}
I_2 = \mathcal{O}(1)h^{-n}\int_0^1 dt \int_{\omega_t < \abs{x}} \exp\left(-\frac{\abs{x}\omega_t}{h} - C_1 \omega_t^{-\frac{1}{s-1}}\right)\,L(dx).
\endeq
We have
\begin{multline*}
I_1 =  \mathcal{O}(1)h^{-n}\,\int_0^1 dt\, \exp\left(-C_1 \omega_t^{-\frac{1}{s-1}}\right)\, \int_{\abs{x} \leq \omega_t} \exp\left(-\frac{\abs{x}^2}{h}\right)\, L(dx) \\
\leq {\cal O}(1) \int_0^1 \exp\left(-C_1 \omega_t^{-\frac{1}{s-1}}\right)\, dt \leq {\cal O}(1)\, \exp\left(-\frac{1}{{\cal O}(1)} h^{-\frac{1}{s}}\right),
\end{multline*}
since $\omega_t^{-1/(s-1)} \geq \displaystyle \frac{1}{{\cal O}(1)} h^{-1/s}$, for $0\leq t \leq 1$. Furthermore, making the change of variables
$y = x\omega_t/h$ in (\ref{psg.12.4}), we get
\begeq
I_2 \leq \mathcal{O}(1)h^{n} \int_0^1 \omega_t^{-2n} \exp\left(- C_1 \omega_t^{-\frac{1}{s-1}}\right)\, dt \leq {\cal O}(1)\, \exp\left(-\frac{1}{{\cal O}(1)} h^{-\frac{1}{s}}\right).
\endeq
We conclude that
\begin{multline}
\label{psg.12.5}
\widetilde{a}^w_{\Gamma_{\omega_1}}(x,hD_x) - \widetilde{a}^w_{\Gamma_{\omega_2}}(x,hD_x) \\ = \mathcal{O}(1)\, \exp\left(-\frac{1}{\mathcal{O}(1)}h^{-\frac{1}{s}}\right): L^2(\comp^n, e^{-2\Phi_0/h}L(dx)) \rightarrow L^2(\comp^n, e^{-2\Phi_0/h}L(dx)),
\end{multline}
provided that $0 < \omega_j$ are such that (\ref{psg.11.1}) holds.

\subsection{Deformations of exponential weights}
\label{chw}
\noindent
Let $a\in {\cal G}^s_b(\Lambda_{\Phi_0})$, where $s>1$, and let $\widetilde{a} \in {\cal G}^s_b(\comp^{2n})$ be an almost holomorphic extension of $a$ such that ${\rm supp}\, \widetilde{a} \subset \Lambda_{\Phi_0} + B_{{\bf C}^{2n}}(0,C)$, for some $C>0$. In Theorem \ref{main}, it was established that
\begeq
\label{chw.01}
\widetilde{a}^w_{\Gamma_{\omega}}(x,hD_x) = {\cal O}(1) \max\left(1,h^{-n(1-\frac{2}{s})}\right): H_{\Phi_0}(\comp^n) \rightarrow L^2(\comp^n, e^{-2\Phi_0/h}L(dx)).
\endeq
Here the $\Gamma^{\Phi_0}_{\omega}(x) := \Gamma_{\omega}(x) \subset \comp^{2n}_{y,\theta}$ has been introduced in (\ref{psg.4}), (\ref{psg.4.5}), with the parameter $0 < \omega$ given in (\ref{psg.7}). The Gevrey smoothness of $a$ allows us to consider deformations of the quadratic weight function $\Phi_0$ --- see~\cite{Sj82},~\cite{Sj96},~\cite{MeSj01} for this idea in the analytic case, where $\displaystyle \frac{1}{{\cal O}(1)}$--perturbations of $\Phi_0$ are allowed, and~\cite{DeSjZw04},~\cite{HeSjSt} for the $C^{\infty}$--theory, where deformations should be ${\cal O}(h \abs{\log h})$--close to $\Phi_0$. See also~\cite[Chapter 3]{M_book} for the Gevrey case.

\bigskip
\noindent
Let $\Phi_1 = \Phi_0 + \psi \in C^{1,1}(\comp^n;\real)$ be such that
\begin{equation}
\label{chw.1}
\norm{\nabla^k \psi}_{L^{\infty}({\bf C}^n)} \leq \frac{\omega}{{\cal O}(1)},\quad k = 0,1,2,
\end{equation}
where the implicit constant in (\ref{chw.1}) is large enough, and let $\Gamma^{\Phi_1}_{\omega}(x) \subset \comp^{2n}_{y,\theta}$ be the following Lipschitz contour adapted to the weight $\Phi_1$, defined analogously to (\ref{psg.4}),
\begeq
\label{chw.1.1}
\Gamma^{\Phi_1}_{\omega}(x): \quad \theta = \frac{2}{i}\frac{\partial \Phi_1}{\partial x}\left(\frac{x+y}{2}\right) + i f_{\omega}(x-y),\quad y\in \comp^n.
\endeq
Here $f_{\omega}$ has been defined in (\ref{psg.4.5}), and $0 < \omega$ satisfies (\ref{psg.7}). We would like to replace the contour $\Gamma^{\Phi_0}_{\omega}(x)$ in (\ref{psg.5.7}) by $\Gamma^{\Phi_1}_{\omega}(x)$, and to this end we introduce the natural intermediate family of contours,
\begin{equation}
\label{chw.1.2}
{\Gamma}^{\Phi_{t}}_\omega(x): \quad \theta = \frac{2}{i}\frac{\partial \Phi_{t}}{\partial x}\left(\frac{x+y}{2}\right)
 +if_{\omega}(x-y), \quad y\in \comp^n,
\end{equation}
where $\Phi_{t}:=(1-t)\Phi_0+t\Phi_1$, $t\in [0,1]$. Let $G_{[0,1]}(x)\subset \comp^{2n}_{y,\theta}$ be the $(2n+1)$--dimensional contour given by (\ref{chw.1.2}), parametrized by $(t,y)\in [0,1]\times \comp^n$, and write, by an application of Stokes' formula,
\begin{multline}
\label{chw.2}
\widetilde{a}^w_{{\Gamma^{\Phi_0}_{\omega}}}(x,hD_x)u(x) - \widetilde{a}^w_{{\Gamma^{\Phi_1}_{\omega}}}(x,hD_x)u(x) \\
= \frac{1}{(2\pi h)^n}\int\!\!\!\int\!\!\!\int_{G_{[0,1]}(x)}
e^{\frac{i}{h}(x-y)\cdot \theta} u(y)\overline{\partial}\left(\widetilde{a}\left(\frac{x+y}{2},\theta\right)\right) \wedge\, dy \wedge \, d\theta =: Ru(x).
\end{multline}
Here $u\in H_{\Phi_0}(\comp^n)$. Along $G_{[0,1]}(x)$, we have in view of (\ref{chw.1}), (\ref{chw.1.2}),
\begeq
\label{chw.2.1}
\textrm{dist}\left(\left(\frac{x+y}{2},\theta\right),\Lambda_{\Phi_0}\right) \leq {\cal O}(1)t \abs{\nabla \psi\left(\frac{x+y}{2}\right)} + \abs{f_{\omega}(x-y)} \leq {\cal O}(1)\,\omega,
\endeq
and combining (\ref{chw.2.1}) with (\ref{psg.4.7}), (\ref{psg.5}), (\ref{chw.1}), and (\ref{chw.1.2}), we get
\begin{multline}
\label{chw.2.2}
Ru(x) \\ = {\cal O}(1)\,h^{-n}
\int_0^1 dt \int e^{\frac{1}{h} \left(\Phi_0(x) - \Phi_0(y) - F_{\omega}(x-y) + t\omega\abs{x-y}/{\cal O}(1)\right)}
\exp\left(-\frac{1}{{\cal O}(1)}h^{-\frac{1}{s}}\right) u(y)\, L(dy).
\end{multline}
The absolute value of the effective kernel of the operator $R$ in (\ref{chw.2.1}) does not exceed therefore
$$
{\cal O}(1)\,h^{-n} \exp\left(-\frac{1}{{\cal O}(1)}h^{-\frac{1}{s}}\right)\,\exp\left(\frac{1}{h} \left(- F_{\omega}(x-y) + \frac{\omega\abs{x-y}}{{\cal O}(1)}\right)\right).
$$
Recalling (\ref{psg.5.5}) and making use of Schur's lemma, we conclude, in view of (\ref{chw.2}), that
\begin{multline}
\label{chw.2.3}
\widetilde{a}^w_{{\Gamma^{\Phi_0}_{\omega}}}(x,hD_x)- \widetilde{a}^w_{{\Gamma^{\Phi_1}_{\omega}}}(x,hD_x) = \mathcal{O}(1)\,
\exp\left(-\frac{1}{\mathcal{O}(1)}h^{-\frac{1}{s}}\right): \\
H_{\Phi_0}(\comp^n) \rightarrow L^2(\comp^n,e^{-2\Phi_0/h} L(dx)).
\end{multline}
In view of (\ref{chw.01}), we have now established the first part of the following result.

\bigskip
\noindent
\begin{theo}
\label{perturbationresult}
Let ${\Phi}_1 = \Phi_0 + \psi \in C^{1,1}(\comp^n;\real)$ be such that {\rm (\ref{chw.1})} holds and let us introduce the contour $\Gamma_{\omega}^{\Phi_1}(x) \subset \comp^{2n}_{y,\theta}$, defined in {\rm (\ref{chw.1.1})}. The realization
$$
\widetilde{a}^w_{\Gamma^{\Phi_1}_\omega}(x,hD_x)u(x)=\frac{1}{(2\pi h)^n}\int\!\!\!\int_{\Gamma^{\Phi_1}_{\omega}(x)} e^{\frac{i}{h}(x-y)\cdot \theta}
\widetilde{a}\left(\frac{x+y}{2},\theta;h\right) u(y)\, dy \wedge\, d\theta
$$
enjoys the following mapping properties:
\begin{itemize}
\item[(i)] We have
\begeq
\label{chw.2.3.1}
\widetilde{a}^w_{\Gamma^{\Phi_1}_\omega}(x,hD_x) = \mathcal{O}(1) \max\left(1,h^{-n(1-\frac{2}{s})}\right)
: H_{\Phi_0}(\comp^n) \rightarrow L^2(\comp^n,e^{-2\Phi_0/h} L(dx)).
\endeq
\item[(ii)] We have
\begeq
\label{chw.2.3.2}
\widetilde{a}^w_{\Gamma^{\Phi_1}_\omega}(x,hD_x) = \mathcal{O}(1) \max\left(1,h^{-n(1-\frac{2}{s})}\right)
: H_{\Phi_1}(\comp^n) \rightarrow L^2(\comp^n,e^{-2\Phi_1/h} L(dx)).
\endeq
Here we have set $H_{\Phi_1}(\comp^n) = \mathrm{Hol}(\comp^n)\cap L^2(\comp^n,e^{-2\Phi_1/h} L(dx))$.
\end{itemize}
\end{theo}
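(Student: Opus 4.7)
The plan is to reduce part (i) to the contour deformation already carried out in the preceding subsection, and to establish part (ii) by a direct effective kernel estimate adapted to the weight $\Phi_1$.

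Part (i) is immediate from what has just been shown: Theorem \ref{main} applied with $\Phi_0$ gives $\widetilde{a}^w_{\Gamma^{\Phi_0}_\omega}(x,hD_x) = \mathcal{O}(1)\max(1,h^{-n(1-2/s)})$ as a map $H_{\Phi_0}(\comp^n) \to L^2(\comp^n, e^{-2\Phi_0/h}L(dx))$, and (\ref{chw.2.3}) shows that the correction $\widetilde{a}^w_{\Gamma^{\Phi_0}_\omega} - \widetilde{a}^w_{\Gamma^{\Phi_1}_\omega}$ is even exponentially small between these same spaces. The triangle inequality gives (\ref{chw.2.3.1}).

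For part (ii) I would write $\widetilde{a}^w_{\Gamma^{\Phi_1}_\omega}(x,hD_x)u(x) = \int k(x,y;h)\,u(y)\,L(dy)$ and control the effective kernel $K_1(x,y) := e^{-\Phi_1(x)/h}\,k(x,y;h)\,e^{\Phi_1(y)/h}$ directly. Along $\Gamma^{\Phi_1}_\omega(x)$ the same computation as in (\ref{psg.5}) yields
\[
\mathrm{Re}\bigl(i(x-y)\cdot\theta\bigr) = 2\mathrm{Re}\Bigl[\partial_x\Phi_1\bigl(\tfrac{x+y}{2}\bigr)\cdot(x-y)\Bigr] - F_\omega(x-y),
\]
and since the midpoint identity (\ref{psg.1.1.1}) is exact for the quadratic form $\Phi_0$, the discrepancy between $2\mathrm{Re}[\partial_x\Phi_1(\tfrac{x+y}{2})\cdot(x-y)]$ and $\Phi_1(x)-\Phi_1(y)$ is entirely due to $\psi := \Phi_1 - \Phi_0$ and equals
\[
\mathcal{E}(x,y) := 2\mathrm{Re}\Bigl[\partial_x \psi\bigl(\tfrac{x+y}{2}\bigr)\cdot(x-y)\Bigr] - \bigl(\psi(x) - \psi(y)\bigr).
\]
The $C^{1,1}$ midpoint Taylor formula gives $|\mathcal{E}(x,y)| \leq \mathcal{O}(1)\|\nabla^2\psi\|_\infty |x-y|^2 \leq (\omega/\mathcal{O}(1))|x-y|^2$, while the triangle inequality combined with $\|\nabla\psi\|_\infty \leq \omega/\mathcal{O}(1)$ yields the crude bound $|\mathcal{E}(x,y)| \leq (\omega/\mathcal{O}(1))|x-y|$. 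Using the first estimate when $|x-y|\leq\omega$ and the second when $|x-y|>\omega$, in both regimes $|\mathcal{E}(x,y)|$ is dominated by $\tfrac{1}{2}F_\omega(x-y)$, whence $|K_1(x,y)| \leq \mathcal{O}(h^{-n})\exp\bigl(-F_\omega(x-y)/(2h)\bigr)$. Schur's lemma together with the $L^1$ integral estimate carried out in (\ref{psg.6.97}) then produces (\ref{chw.2.3.2}).

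The main subtlety is the double bookkeeping of the perturbation error $\mathcal{E}(x,y)$ just described: the purely quadratic Taylor remainder $\omega|x-y|^2$ would fail to be absorbed by the linear part $\omega|x-y|$ of $F_\omega$ in the long-range regime, and only the simultaneous use of the bound on $\nabla^2\psi$ near the diagonal and on $\nabla\psi$ far from it provides the pointwise domination matched to the two regimes in the definition of $F_\omega$. Once this is in place, the remainder is a routine Schur estimate, run in complete parallel with the calculations leading to (\ref{psg.6.98}).
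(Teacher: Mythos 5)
Your proposal is correct and follows essentially the same route as the paper: part (i) by combining Theorem \ref{main} with the exponential smallness (\ref{chw.2.3}) via triangle inequality, and part (ii) by the effective kernel estimate, where the perturbation term $\mathcal{E}(x,y)$ coming from $\psi$ is bounded using both the Hessian (Taylor) bound and the gradient (triangle inequality) bound, then dominated by $\tfrac12 F_\omega(x-y)$ and finished with Schur's lemma. The only cosmetic difference is where you place the split between the two bounds ($|x-y|\le\omega$ versus $|x-y|>\omega$, matching the definition of $F_\omega$); the paper instead writes the combined bound as $\frac{\omega|x-y|}{\mathcal{O}(1)}\min(1,|x-y|)$, i.e.\ splits at $|x-y|=1$, and both choices yield the same domination once the implicit constant in (\ref{chw.1}) is taken large enough.
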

\begin{proof}
We only need to check the validity of the second statement, and when doing so, let us consider along $\Gamma_{\omega}^{\Phi_1}(x)$,
\begin{multline}
\label{chw.2.4}
-\Phi_1(x)+ {\rm Re}\,\left(i(x-y)\cdot \theta\right) + \Phi_1(y) \\
= -\Phi_1(x) + {\rm Re}\,\left(2\frac{\partial\Phi_1}{\partial x}\Big(\frac{x+y}{2}\Big)\cdot(x-y)\right) +\Phi_1(y) - F_\omega(x-y) \\
= -\psi(x) + \biggl\langle{\nabla \psi\left(\frac{x+y}{2}\right),x-y\biggr\rangle}_{{\bf R}^{2n}} + \psi(y) - F_\omega(x-y).
\end{multline}
Here we have used (\ref{psg.1.1.1}) on the last line. We have
\begeq
\label{chw.2.5}
-\psi(x) + \biggl\langle{\nabla \psi\left(\frac{x+y}{2}\right),x-y\biggr\rangle}_{{\bf R}^{2n}} + \psi(y) \leq 2 \norm{\nabla \psi}_{L^{\infty}({\bf C}^n)}\abs{x-y},
\endeq
and an application of Taylor's formula gives that
\begin{multline}
\label{chw.2.6}
-\psi(x) + \biggl\langle{\nabla \psi\left(\frac{x+y}{2}\right),x-y\biggr\rangle}_{{\bf R}^{2n}} + \psi(y) \\
= \int_0^1 (1-t) \psi''\left(\frac{x+y}{2} - t\left(\frac{x-y}{2}\right)\right)\,\frac{(x-y)}{2}\cdot \frac{(x-y)}{2}\,dt \\
- \int_0^1 (1-t) \psi''\left(\frac{x+y}{2} + t\left(\frac{x-y}{2}\right)\right)\,\frac{(x-y)}{2}\cdot \frac{(x-y)}{2}\,dt \\
\leq \frac{1}{4} \norm{\nabla^2 \psi}_{L^{\infty}({\bf C}^n)}\abs{x-y}^2.
\end{multline}
Here the Hessian and the scalar product are taken in the sense of $\real^{2n}$. Writing
$$
\widetilde{a}^w_{\Gamma^{\Phi_1}_\omega}(x,hD_x)u(x) = \int k_{\Gamma^{\Phi_1}_\omega}(x,y;h)u(y) \, L(dy),
$$
we obtain, in view of (\ref{chw.1}), (\ref{chw.2.4}), (\ref{chw.2.5}), and (\ref{chw.2.6}), that the effective kernel of $\widetilde{a}^w_{\Gamma^{\Phi_1}_\omega}(x,hD_x)$ satisfies
\begin{multline}
\label{chw.2.7}
e^{-\frac{\Phi_1(x)}{h}} k_{\Gamma^{\Phi_1}_\omega}(x,y;h) e^{\frac{\Phi_1(y)}{h}} \\ \leq
{\cal O}(1) h^{-n}\, \exp\left(\frac{1}{h}\left(-F_{\omega}(x-y) + \frac{\omega \abs{x-y}}{{\cal O}(1)}{\rm min}\,\left(1,\abs{x-y}\right)\right)\right) \\ \leq {\cal O}(1) h^{-n}\, \exp\left(-\frac{1}{2h}F_{\omega}(x-y)\right),
\end{multline}
provided that the implicit constant in (\ref{chw.1}) is sufficiently large. The pointwise estimate (\ref{chw.2.7}), on the level of effective kernels, is therefore of the same kind as (\ref{psg.6.96}), and arguing as in the proof of Theorem \ref{main}, we get the operator norm bound (\ref{chw.2.3.2}).
\end{proof}

\bigskip
\noindent
Combining Theorem \ref{main} and Theorem \ref{perturbationresult}, we get
\begeq
\label{chw.2.8}
a^w_{\Gamma}(x,hD_x) = \widetilde{a}^w_{\Gamma^{\Phi_1}_{\omega}}(x,hD_x) + R,
\endeq
where
$$
R = \mathcal{O}(1)\,\exp\left(-\frac{1}{C}h^{-\frac{1}{s}}\right): \\
H_{\Phi_0}(\comp^n) \rightarrow L^2(\comp^n,e^{-2\Phi_0/h} L(dx)),
$$
for some $C>0$, and therefore
\begeq
\label{chw.2.9}
R = \mathcal{O}(1)\,\exp\left(-\frac{1}{2C}h^{-\frac{1}{s}}\right): \\
H_{\Phi_1}(\comp^n) \rightarrow L^2(\comp^n,e^{-2\Phi_1/h} L(dx)),
\endeq
provided that the implicit constant in (\ref{chw.1}) is large enough. Another application of Theorem \ref{perturbationresult} together with (\ref{chw.2.8}), (\ref{chw.2.9}) allows us to conclude that the operator $a^w_{\Gamma}(x,hD_x)$ extends to a uniformly bounded map
\begeq
\label{chw.2.10}
a^w_{\Gamma}(x,hD_x) = {\cal O}(1): H_{\Phi_1}(\comp^n) \rightarrow H_{\Phi_1}(\comp^n),
\endeq
for $1 < s \leq 2$, and we can view the operator $\widetilde{a}^w_{\Gamma^{\Phi_1}_{\omega}}(x,hD_x)$ as the correspon\-ding u\-ni\-formly bounded realization.

\bigskip
\noindent
In the remainder of this subsection, we shall be concerned with the problem of finding uniformly bounded realizations of the operator $a^w_{\Gamma}(x,hD_x)$ in the region $s>2$. As we shall see, we shall then have to accept a remainder which is larger than the one in (\ref{chw.2.9}). Let us start with the following largely heuristic remark.

\bigskip
\noindent
{\it Remark}. In Theorem \ref{main}, we have established that
\begin{multline*}
a^w_{\Gamma}(x,hD_x)-\widetilde{a}^w_{{\Gamma_\omega}}(x,hD_x)=\mathcal{O}(1)\,\exp\left(-\frac{1}{\mathcal{O}(1)} h^{-\frac{1}{s}}\right)
:\\ L^2(\comp^n,e^{-2\Phi_0/h} L(dx)) \rightarrow L^2(\comp^n,e^{-2\Phi_0/h} L(dx)),
\end{multline*}
where the realization $\widetilde{a}^w_{{\Gamma_\omega}}(x,hD_x)$ is uniformly bounded on $L^2(\comp^n,e^{-2\Phi_0/h} L(dx))$ in the range
$1 < s \leq 2$, while we only have
$$
\widetilde{a}^w_{{\Gamma_\omega}}(x,hD_x)= {\cal O}(1) h^{-n\left(1-\frac{2}{s}\right)}\,:\,L^2(\comp^n,e^{-2\Phi_0/h} L(dx)) \rightarrow L^2(\comp^n,e^{-2\Phi_0/h} L(dx)),
$$
for $s>2$. Our purpose here is to address the question whether there exists a (Lipschitz) contour $\widetilde\Gamma(x) \subset \comp^{2n}_{y,\theta}$
of dimension $2n$, such that the following two properties,
\begin{multline}
\label{chw.2.11}
a^w_{\Gamma}(x,hD_x)-\widetilde{a}^w_{\widetilde{\Gamma}}(x,hD_x)=\mathcal{O}(1)\,\exp\left(-\frac{1}{\mathcal{O}(1)} h^{-\frac{1}{s}}\right)
:\\ L^2(\comp^n,e^{-2\Phi_0/h} L(dx)) \rightarrow L^2(\comp^n,e^{-2\Phi_0/h} L(dx)),
\end{multline}
and
\begeq
\label{chw.2.12}
\widetilde{a}^w_{{\widetilde\Gamma}}(x,hD_x)=\mathcal{O}(1)\,:\,L^2(\comp^n,e^{-2\Phi_0/h} L(dx)) \rightarrow L^2(\comp^n,e^{-2\Phi_0/h} L(dx)),
\endeq
hold, for $s>2$. Indeed, let us pose the following question.

\medskip
\noindent
{\it Question:} Let $s>2$. Is there a (Lipschitz) function $f: \comp^n \rightarrow \comp^n$ such that with the choice
$$
\widetilde{\Gamma}(x): \quad \theta = \frac{2}{i} \frac{\partial \Phi_0}{\partial x}\left(\frac{x+y}{2}\right) + if(x-y),\quad y\in \comp^n,
$$
the properties (\ref{chw.2.11}), (\ref{chw.2.12}) hold?

\medskip
\noindent
The following discussion seems to indicate that the answer to the question is likely to be negative. Let us try $f$ of the form
$$
f(z)=\widehat{f}(\abs{z})\frac{\overline{z}}{\abs{z}},
$$
for a suitable continuous $\widehat{f}\geq 0$ on $[0,\infty)$. The absolute value of the effective kernel of the realization $\widetilde{a}^w_{{\widetilde\Gamma}}(x,hD_x)$ then does not exceed
\begeq
\label{chw.2.13}
{\cal O}(1) h^{-n}\, \exp\left(-\frac{1}{h}\abs{x-y} \widehat{f}(\abs{x-y})\right),
\endeq
and in view of Schur's lemma, the property (\ref{chw.2.12}) holds provided that
\begeq
\label{chw.2.14}
h^{-n}\, \int e^{-\frac{1}{h}\abs{x} \widehat{f}(\abs{x})}\, L(dx) \leq {\cal O}(1).
\endeq
On the other hand, introducing the intermediate contours given by
$$
\widetilde{\Gamma}_t(x): \quad \theta = \frac{2}{i} \frac{\partial \Phi_0}{\partial x}\left(\frac{x+y}{2}\right) + itf(x-y),\quad y\in \comp^n,
$$
for $t\in [0,1]$, and applying Stokes' formula, we see that the effective kernel of the contribution coming from the region
$\underset{0\leq t\leq 1}{\bigcup}\widetilde{\Gamma}_t(x)$ has the form
\begeq
\label{chw.2.15}
{\cal O}(1) h^{-n} \int_0^1 \exp\left(-\frac{1}{h}t\widehat{f}(\abs{x-y})\abs{x-y} - C_1 \left(t\widehat{f}(\abs{x-y})\right)^{-\frac{1}{s-1}}\right)\, dt,
\endeq
for some $C_1 >0$. Here we have ignored the possible trouble coming from the Jacobian ${\rm det}\,(\partial_{\overline{y}}\theta)$. Recalling (\ref{chw.2.11}) and taking $C_1 = 1$ in (\ref{chw.2.15}) for simplicity, we are led to the following pointwise condition on $\widehat{f}$,
$$
\frac{1}{h}\widehat{f}(r)r+\widehat{f}(r)^{-\frac{1}{s-1}}\geq \frac{h^{-\frac{1}{s}}}{\mathcal{O}(1)},\quad r\geq 0.
$$
In particular, we need that for each $r\geq 0$, uniformly,
$$
\frac{1}{h}\widehat{f}(r)r\geq \frac{h^{-\frac{1}{s}}}{\mathcal{O}(1)}\quad {\textrm{or}}\quad
\widehat{f}(r)^{-\frac{1}{s-1}}\geq  \frac{h^{-\frac{1}{s}}}{\mathcal{O}(1)},
$$
or equivalently,
$$
\widehat{f}(r)\geq \frac{h^{1-\frac{1}{s}}}{\mathcal{O}(1)r}\quad {\textrm{or}}\quad
\widehat{f}(r)\leq \frac{h^{1-\frac{1}{s}}}{\mathcal{O}(1)}.
$$
Using that $\widehat{f}$ is bounded near $0$, we conclude that $\widehat{f}(r)\leq  \frac{h^{1-\frac{1}{s}}}{\mathcal{O}(1)}$
on some non-trivial interval of the form $[0,\frac{1}{\mathcal{O}(1)}]$. In other words,
$$
\widehat{f}(r) \leq \mathcal{O}(1) \omega, \quad \textrm{for}\quad 0\leq r\leq \frac{1}{\mathcal{O}(1)},
$$
and therefore we get
$$
h^{-n}\, \int e^{-\frac{1}{h}\abs{x} \widehat{f}(\abs{x})}\, L(dx) \geq h^{-n} \int_{\abs{x}\leq \frac{1}{{\cal O}(1)}} e^{-\frac{{\cal O}(1)}{h}\omega \abs{x}}\, L(dx) \asymp  h^{-n\left(1-\frac{2}{s}\right)}.
$$
Here the expression in the right hand side is unbounded as $h\rightarrow 0^+$, for $s>2$, which is incompatible with (\ref{chw.2.14}).

\bigskip
\noindent
When finding a uniformly bounded realization of the operator $a^w_{\Gamma}(x,hD_x)$ on the space $H_{\Phi_1}(\comp^n)$, for $s>2$, we are going to perform an additional contour deformation, starting from the unbounded realization given by $\widetilde{a}^w_{\Gamma^{\Phi_1}_{\omega}}(x,hD_x)$. The price that we have to pay is that we should then allow for a remainder that is larger than the one in (\ref{chw.2.9}), and is only moderately smaller than the remainder naturally associated to the contour (\ref{psg.2}) used in the analytic theory, see \eqref{psg.4.7.3}.

\begin{theo}
\label{contour2}
Assume that $s>2$, and let $\Phi_1 = \Phi_0 + \psi\in C^{1,1}(\comp^n;\real)$ be such that {\rm (\ref{chw.1})} holds. Let $\Gamma^{\Phi_1}_{h^{1/2}}(x) \subset \comp^{2n}_{y,\theta}$ be the contour, defined as in {\rm (\ref{chw.1.1})}, with $\omega$ replaced by $h^{1/2}$. We have
\begeq
\label{chw.3}
\widetilde{a}^w_{\Gamma^{\Phi_1}_\omega}(x,hD_x) = \widetilde{a}^w_{\Gamma^{\Phi_1}_{h^{1/2}}}(x,hD_x)+ R,
\endeq
where the realization
$$
\widetilde{a}^w_{\Gamma^{\Phi_1}_{h^{1/2}}}(x,hD_x)u(x)=\frac{1}{(2\pi h)^n}\int\!\!\!\int_{\Gamma^{\Phi_1}_{h^{1/2}}(x)} e^{\frac{i}{h}(x-y)\cdot \theta}
\widetilde{a}\left(\frac{x+y}{2},\theta;h\right) u(y)\, dy \wedge\, d\theta
$$
satisfies
\begeq
\label{chw.3.1}
\widetilde{a}^w_{\Gamma^{\Phi_1}_{h^{1/2}}}(x,hD_x) = {\cal O}(1): H_{\Phi_1}(\comp^n) \rightarrow L^2(\comp^n,e^{-2\Phi_1/h} L(dx)).
\endeq
Furthermore,
\begeq
\label{chw.3.2}
R = \mathcal{O}(1)\,\exp\left(-\frac{1}{{\cal O}(1)}h^{-\frac{1}{2s-2}}\right): \\
H_{\Phi_1}(\comp^n) \rightarrow L^2(\comp^n,e^{-2\Phi_1/h} L(dx)).
\endeq
\end{theo}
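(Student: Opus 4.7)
The plan is to establish the two claims (\ref{chw.3.1}) and (\ref{chw.3.2}) separately, in each case adapting the machinery of Subsection \ref{cds} and the proof of Theorem \ref{perturbationresult} to the present setting, where $\omega$ is replaced by $h^{1/2}$. Throughout, the key inequality to exploit is $\omega \leq h^{1/2}$, valid for $s \geq 2$ and $h$ sufficiently small.

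For (\ref{chw.3.1}), I would repeat the proof of Theorem \ref{perturbationresult} essentially verbatim, with $\omega$ in the contour (\ref{chw.1.1}) replaced by $h^{1/2}$ throughout. The algebraic identities (\ref{chw.2.4})--(\ref{chw.2.6}) persist unchanged, and since the perturbation hypothesis (\ref{chw.1}) still yields $\|\nabla^k \psi\|_{L^\infty} \leq \omega/\mathcal{O}(1) \leq h^{1/2}/\mathcal{O}(1)$, the correction term $-\psi(x) + \langle\nabla \psi((x+y)/2), x-y\rangle + \psi(y)$ is absorbed into $F_{h^{1/2}}(x-y)/2$ exactly as in (\ref{chw.2.7}): on the region $|x-y| \leq h^{1/2}$ one compares $\omega|x-y|^2$ to $|x-y|^2$, and on $|x-y| > h^{1/2}$ one compares $\omega|x-y|$ to $h^{1/2}|x-y|$, in both cases yielding an extra factor of at most $1/2$. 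The effective kernel is then dominated by $\mathcal{O}(h^{-n})\exp(-F_{h^{1/2}}(x-y)/(2h))$, and Schur's lemma reduces (\ref{chw.3.1}) to the $L^1$ estimate $h^{-n}\int \exp(-F_{h^{1/2}}(x)/(2h))\,L(dx) = \mathcal{O}(1)$; splitting into $|x|\leq h^{1/2}$ and $|x| > h^{1/2}$ and changing variables in the second integral as in (\ref{psg.6.97}), each contribution is $\mathcal{O}(h^n)$, which the prefactor $h^{-n}$ cancels.

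For (\ref{chw.3.2}), I would deform the contour from $\Gamma^{\Phi_1}_{\omega}(x)$ to $\Gamma^{\Phi_1}_{h^{1/2}}(x)$ via the family $\Gamma^{\Phi_1}_{\omega_t}(x)$, $t \in [0,1]$, with $\omega_t = (1-t)\omega + t h^{1/2}$, and apply Stokes' formula to express $R$ as an integral of $\overline{\partial}\widetilde{a}$ over the $(2n+1)$-dimensional contour $\bigcup_{t \in [0,1]}\Gamma^{\Phi_1}_{\omega_t}(x)$, closely following (\ref{psg.12})--(\ref{psg.12.1}). The crucial geometric observation is that along this contour,
$$
\mathrm{dist}\Big(\Big(\tfrac{x+y}{2},\theta\Big),\Lambda_{\Phi_0}\Big) \leq \mathcal{O}(|\nabla \psi|) + |f_{\omega_t}(x-y)| \leq \mathcal{O}(\omega + \omega_t) = \mathcal{O}(\omega_t),
$$
since $\omega \leq \omega_t$. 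Hence (\ref{psg.4.7}) supplies the uniform pointwise bound $|\overline{\partial}\widetilde{a}| \leq \mathcal{O}(1)\exp(-\omega_t^{-1/(s-1)}/\mathcal{O}(1))$, which, since $\omega_t \leq h^{1/2}$, is in turn dominated by $\mathcal{O}(1)\exp(-h^{-1/(2s-2)}/\mathcal{O}(1))$. Combined with the weight estimates after absorbing the $\psi$ corrections as in part (1), and with the polynomially bounded Jacobian from the $(t,y,\bar y)$ parametrization absorbed into $\mathcal{O}(1)$ exactly as in (\ref{psg.12.1}), the effective kernel of $R$ is dominated by
$$
\mathcal{O}(h^{-n})\int_0^1 \exp\Big(-\frac{F_{\omega_t}(x-y)}{2h} - c\,\omega_t^{-1/(s-1)}\Big)\,dt.
$$

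The main obstacle is to extract the sharp remainder bound from this kernel via Schur's lemma, reconciling the growth of the spatial $L^1$ norm at small $\omega_t$ with the Gevrey decay. Following (\ref{psg.12.2})--(\ref{psg.12.4}), for each fixed $t$ the $L^1$ norm of $\exp(-F_{\omega_t}(\cdot)/(2h))$ is $\mathcal{O}((h/\omega_t)^{2n})$ (since $\omega_t^2 \leq h$), so the operator norm of $R$ is bounded by
$$
\mathcal{O}(h^n)\sup_{t \in [0,1]}\,\omega_t^{-2n}\exp\Big(-c\,\omega_t^{-1/(s-1)}\Big).
$$
A direct computation of the logarithmic derivative shows that the function $\sigma \mapsto \sigma^{-2n}\exp(-c\sigma^{-1/(s-1)})$ is increasing on $[\omega, h^{1/2}]$ for small $h$ (because $\sigma^{-1/(s-1)}$ is much larger than the critical threshold $2n(s-1)/c$ throughout this interval), so its supremum is attained at $\omega_t = h^{1/2}$, yielding $h^{-n}\exp(-c\,h^{-1/(2s-2)})$. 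Multiplied by the prefactor $h^n$, this gives the claimed bound (\ref{chw.3.2}).
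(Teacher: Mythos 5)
Your proposal is correct and follows essentially the same path as the paper's proof: the same contour family $\Gamma^{\Phi_1}_{\omega_t}$ with $\omega_t=(1-t)\omega+th^{1/2}$, the same Stokes argument, the same distance estimate to $\Lambda_{\Phi_0}$ along the deformation, and the same Schur-lemma reduction for both the boundedness (\ref{chw.3.1}) and the remainder (\ref{chw.3.2}). The one small divergence is in how you handle the final $t$-integral: the paper replaces the $\overline{\partial}$-decay by the uniform bound $\exp(-h^{-1/(2s-2)}/\mathcal{O}(1))$ coming from $\mathrm{dist}\leq 2h^{1/2}$, then bounds $\int_0^1\omega_t^{-2n}\,dt\leq\omega^{-2n}$ and absorbs the resulting negative power of $h$ into the exponential, whereas you retain the $\omega_t$-dependent factor $\exp(-c\,\omega_t^{-1/(s-1)})$ and argue by monotonicity that $\sup_{\sigma\in[\omega,h^{1/2}]}\sigma^{-2n}\exp(-c\,\sigma^{-1/(s-1)})$ is attained at $\sigma=h^{1/2}$; both computations yield (\ref{chw.3.2}).
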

\begin{proof}
Let us note, first of all, that
\begeq
\label{chw.3.3}
\omega=\frac{1}{\mathcal{O}(1)}h^{1 -\frac{1}{s}} \leq h^{1/2},\qquad \textrm{for all}\quad s>2.
\endeq
With this in mind, we shall adapt the approach used at the end of subsection \ref{cds}. Let us introduce the natural family of intermediate contours,
\begin{equation}
\label{chw.3.4}
\Gamma^{\Phi_1}_{\omega_{t}}(x): \quad \theta = \frac{2}{i}\frac{\partial \Phi_1}{\partial x}\left(\frac{x+y}{2}\right)
 + i\,f_{\omega_{t}}(x-y),\quad y\in \comp^n,
\end{equation}
where $\omega_t=(1-t)\omega + th^{1/2}$, $t\in[0,1]$. An application of Stokes' formula gives that
\begin{multline}
\label{chw.3.5}
\widetilde{a}^w_{{\Gamma^{\Phi_1}_{\omega}}}(x,hD_x)u(x) - \widetilde{a}^w_{{\Gamma^{\Phi_1}_{h^{1/2}}}}(x,hD_x)u(x) \\
= \frac{1}{(2\pi h)^n}\int\!\!\!\int\!\!\!\int_{\underset{t\in[0,1]}\bigcup\Gamma^{\Phi_1}_{\omega_{t}}(x)}
e^{\frac{i}{h}(x-y)\cdot \theta} u(y)\overline{\partial}\left(\widetilde{a}\left(\frac{x+y}{2},\theta\right)\right) \wedge\, dy \wedge \, d\theta =: Ru(x).
\end{multline}
As usual, let us now proceed to estimate the effective kernel of the operator $R$ in (\ref{chw.3.5}). To this end, we notice that along
the contour $\underset{t\in[0,1]}\bigcup\Gamma^{\Phi_1}_{\omega_{t}}(x)$ we have, similarly to (\ref{chw.2.4}),
\begin{multline}
\label{chw.3.6}
-\Phi_1(x)+ {\rm Re}\,\left(i(x-y)\cdot \theta\right) + \Phi_1(y) \\
= -\psi(x) + \biggl\langle{\nabla \psi\left(\frac{x+y}{2}\right),x-y\biggr\rangle}_{{\bf R}^{2n}} + \psi(y) - F_{\omega_t}(x-y) \\
\leq - F_{\omega_t}(x-y) + \frac{\omega_t\abs{x-y}}{{\cal O}(1)}{\rm min}(1,\abs{x-y}) \leq -\frac{1}{2} F_{\omega_t}(x-y).
\end{multline}
Here we have also used (\ref{chw.2.5}), (\ref{chw.2.6}), as well as the fact that $\omega \leq \omega_t$, for $t\in [0,1]$. Noticing also that along the contour $\underset{t\in[0,1]}\bigcup\Gamma^{\Phi_1}_{\omega_{t}}(x)$ we have, in view of (\ref{chw.3.3}) and (\ref{chw.3.4}),
$$
\mathrm{dist}\left(\left(\frac{x+y}{2},\theta\right),\Lambda_{\Phi_0}\right) \leq 2\abs{\nabla \psi\left(\frac{x+y}{2}\right)} + \omega_t
\leq \frac{\omega}{{\cal O}(1)} + \omega_t \leq 2 h^{1/2},
$$
we conclude that the absolute value of the effective kernel of the operator $R$ in (\ref{chw.3.5}), for the boundedness on $L^2(\comp^n, e^{-2\Phi_1/h}L(dx))$, does not exceed
\begeq
\label{chw.3.7}
{\cal O}(1)\,h^{-n} \int_0^1 e^{-\frac{1}{2h} F_{\omega_t}(x-y)}\, \exp\left(-\frac{1}{{\cal O}(1)}h^{-\frac{1}{2(s-1)}}\right) dt.
\endeq
In view of Schur's lemma and (\ref{chw.3.7}), to estimate the operator norm of $R$, we have to control the $L^1$--norm
\begeq
\label{chw.3.8}
{\cal O}(1)\,h^{-n} \int_0^1 dt\, \int e^{-\frac{1}{2h} F_{\omega_t}(x)}\, \exp\left(-\frac{1}{{\cal O}(1)}h^{-\frac{1}{2(s-1)}}\right) L(dx) = I_1 + I_2,
\endeq
where
\begeq
\label{chw.3.9}
I_1 = \mathcal{O}(1)h^{-n}\int_0^1 dt\, \int_{|x|\leq \omega_t} \exp\left(-\frac{|x|^2}{\mathcal{O}(1)h}\right)
\exp\left(-\frac{1}{{\cal O}(1)}h^{-\frac{1}{2(s-1)}}\right) L(dx),
\endeq
and
\begeq
I_2 = \mathcal{O}(1)h^{-n}\int_0^1 dt\, \int_{\omega_t<\abs{x}}
\exp\left(-\frac{|x|\omega_t}{\mathcal{O}(1)h}\right) \exp\left(-\frac{1}{{\cal O}(1)}h^{-\frac{1}{2(s-1)}}\right) L(dx).
\endeq
We have
\begeq
\label{chw.3.10}
I_1 = \mathcal{O}(1)\, \exp\left(-\frac{1}{{\cal O}(1)}h^{-\frac{1}{2(s-1)}}\right),
\endeq
and
\begin{multline}
\label{chw.3.11}
I_2 = \mathcal{O}(1)\, \exp\left(-\frac{1}{{\cal O}(1)}h^{-\frac{1}{2(s-1)}}\right)\int_0^1 \frac{h^n}{\omega_t^{2n}}\, dt \\
\leq \mathcal{O}(1)\, \exp\left(-\frac{1}{{\cal O}(1)}h^{-\frac{1}{2(s-1)}}\right) \frac{h^n}{\omega^{2n}} \leq \mathcal{O}(1)\,
\exp\left(-\frac{1}{2{\cal O}(1)}h^{-\frac{1}{2(s-1)}}\right).
\end{multline}
The estimate (\ref{chw.3.2}) follows, in view of (\ref{chw.3.5}), (\ref{chw.3.8}), (\ref{chw.3.10}), and (\ref{chw.3.11}).

\bigskip
\noindent
We shall finally verify the uniform boundedness property (\ref{chw.3.1}) for the realization $\widetilde{a}^w_{\Gamma^{\Phi_1}_{h^{1/2}}}(x,hD_x)$. To this end, let us observe that along the contour $\Gamma^{\Phi_1}_{h^{1/2}}(x)$, we have, similarly to (\ref{chw.3.6}),
\begeq
\label{chw.3.12}
-\Phi_1(x)+ {\rm Re}\,\left(i(x-y)\cdot \theta\right) + \Phi_1(y) \leq -\frac{1}{2} F_{h^{1/2}}(x-y).
\endeq
Writing
$$
\widetilde{a}^w_{\Gamma^{\Phi_1}_{h^{1/2}}}(x,hD_x)u(x) = \int k_{\Gamma^{\Phi_1}_{h^{1/2}}}(x,y;h)u(y) \, L(dy),
$$
we obtain therefore, in view of (\ref{chw.3.12}),
\begeq
\label{chw.3.13}
e^{-\frac{\Phi_1(x)}{h}} k_{\Gamma^{\Phi_1}_{h^{1/2}}}(x,y;h) e^{\frac{\Phi_1(y)}{h}} \leq {\cal O}(1)\, h^{-n} \exp\left(-\frac{1}{2} F_{h^{1/2}}(x-y)\right).
\endeq
The pointwise bound (\ref{chw.3.13}), on the level of effective kernels, is therefore of the same kind as (\ref{psg.6.96}), with the only difference that the small parameter $\omega$ has been replaced by $h^{1/2} \geq \omega$. An application of Schur's lemma gives therefore immediately (\ref{chw.3.1}). This completes the proof.
\end{proof}

\bigskip
\noindent
Combining (\ref{chw.2.8}), (\ref{chw.2.9}), and Theorem \ref{contour2}, we get in the region $s>2$,
\begeq
\label{chw.3.14}
a^w_{\Gamma}(x,hD_x) = \widetilde{a}^w_{\Gamma^{\Phi_1}_{h^{1/2}}}(x,hD_x) + R,
\endeq
where (\ref{chw.3.1}) holds, and the remainder $R$ satisfies (\ref{chw.3.2}). We conclude in particular that the operator $a^w_{\Gamma}(x,hD_x)$ extends to a uniformly bounded map
\begeq
\label{chw.3.15}
a^w_{\Gamma}(x,hD_x) = {\cal O}(1): H_{\Phi_1}(\comp^n) \rightarrow H_{\Phi_1}(\comp^n),
\endeq
for $s>2$, and we can view the operator $\widetilde{a}^w_{\Gamma^{\Phi_1}_{h^{1/2}}}(x,hD_x)$ as the corresponding uniformly bounded realization.

\medskip
\noindent
Theorem \ref{theo_main1} and Theorem \ref{theo_main2} in the introduction now follow from Theorem \ref{main}, Theorem \ref{perturbationresult}, and Theorem \ref{contour2}.

\medskip
\noindent
{\it Remark}. In the work~\cite{HiLaSjZe20}, prepared simultaneously with the present one, the mapping property (\ref{chw.3.15}) in the range $s\geq 2$ is established using alternative methods, not relying upon the contour deformations techniques.

\medskip
\noindent
The discussion in this section gives, in particular, the following result.
\begin{coro}
Let $a\in {\cal G}^s_b(\Lambda_{\Phi_0})$, $s>1$, and let $\Phi_1 = \Phi_0 + \psi \in C^{1,1}(\comp^n;\real)$ be such that {\rm (\ref{chw.1})} holds. The operator ${\rm Op}_h^w(a)$ extends to a uniformly bounded map
$$
{\rm Op}_h^w(a) = {\cal O}(1): H_{\Phi_1}(\comp^n) \rightarrow H_{\Phi_1}(\comp^n).
$$
\end{coro}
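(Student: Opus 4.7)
The plan is to observe that this corollary amounts to a consolidated restatement of what has already been established in this subsection, namely (\ref{chw.2.10}) in the regime $1 < s \leq 2$ and (\ref{chw.3.15}) in the regime $s > 2$, so I would split into these two ranges and in each invoke the appropriate combination of results proved above.

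In the range $1 < s \leq 2$, my plan is to combine Theorem \ref{main} with Theorem \ref{perturbationresult}(ii). Theorem \ref{main} gives $a^w_{\Gamma}(x, hD_x) = \widetilde{a}^w_{\Gamma^{\Phi_0}_\omega}(x, hD_x) + R$, with $R$ uniformly bounded by $\mathcal{O}(1)\exp(-h^{-1/s}/\mathcal{O}(1))$ on $L^2(\comp^n, e^{-2\Phi_0/h}L(dx))$. A further contour deformation from $\Gamma^{\Phi_0}_\omega$ to $\Gamma^{\Phi_1}_\omega$, carried out exactly as in the proof of Theorem \ref{perturbationresult} and giving (\ref{chw.2.3}), contributes an additional remainder of the same exponential type. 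Since $\max(1, h^{-n(1-2/s)}) = 1$ in this range, Theorem \ref{perturbationresult}(ii) then yields the uniform boundedness $\widetilde{a}^w_{\Gamma^{\Phi_1}_\omega}(x, hD_x) = \mathcal{O}(1): H_{\Phi_1}(\comp^n) \to L^2(\comp^n, e^{-2\Phi_1/h}L(dx))$.

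In the range $s > 2$, my plan is to additionally invoke Theorem \ref{contour2} and replace $\widetilde{a}^w_{\Gamma^{\Phi_1}_\omega}(x, hD_x)$ by $\widetilde{a}^w_{\Gamma^{\Phi_1}_{h^{1/2}}}(x, hD_x)$, which by (\ref{chw.3.1}) is uniformly bounded $H_{\Phi_1}(\comp^n) \to L^2(\comp^n, e^{-2\Phi_1/h}L(dx))$, at the price of an additional remainder of size $\mathcal{O}(1)\exp(-h^{-1/(2s-2)}/\mathcal{O}(1))$ on $H_{\Phi_1}(\comp^n)$, as provided by (\ref{chw.3.2}).

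The main technical point, which I expect to be the principal obstacle, is the transfer of the exponentially small remainder estimates from $L^2(\comp^n, e^{-2\Phi_0/h}L(dx))$ to $L^2(\comp^n, e^{-2\Phi_1/h}L(dx))$, as already indicated in the passage from (\ref{chw.2.8}) to (\ref{chw.2.9}). The key observation is that $\|\Phi_1 - \Phi_0\|_{L^\infty} \leq \omega/\mathcal{O}(1) = h^{1-1/s}/\mathcal{O}(1)$, so that conjugation by $e^{(\Phi_1 - \Phi_0)/h}$ contributes multiplicative factors of size $\exp(\mathcal{O}(1)h^{-1/s})$ that are absorbed by the remainder decay $\exp(-h^{-1/s}/\mathcal{O}(1))$, provided the implicit constant in (\ref{chw.1}) is taken large enough. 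Finally, since $a^w_\Gamma u \in \mathrm{Hol}(\comp^n)$ whenever $u \in \mathrm{Hol}(\comp^n)$, as recalled in the text following (\ref{psg.1.1.1}), the image lies in $H_{\Phi_1}(\comp^n)$, yielding the claimed mapping property.
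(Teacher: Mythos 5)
Your proof is correct and follows exactly the route the paper itself takes: the corollary is a consolidation of (\ref{chw.2.10}) (for $1<s\leq 2$, from Theorems \ref{main} and \ref{perturbationresult}) and (\ref{chw.3.15}) (for $s>2$, additionally invoking Theorem \ref{contour2}), together with the weight-transfer argument encoded in the passage from (\ref{chw.2.8}) to (\ref{chw.2.9}), which you reproduce accurately.
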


\subsection{Phase symmetries and composition of Gevrey operators}
\label{Comp}
In the first part of this subsection, we shall develop an approach to the composition of semiclassical Weyl quantizations in the complex domain, based on the representation of the operators as superpositions of suitable phase symmetries~\cite{Se63}. Such an approach is carried out in~\cite{Le10} in the real setting, and here we shall adapt it to the present complex environment.

\medskip
\noindent
Let $\Phi_0$ be a strictly plurisubharmonic quadratic form on $\comp^n$ and let the I-Lagrangian R-symplectic linear subspace $\Lambda_{\Phi_0}\subset \comp^n_x \times \comp^n_{\xi}$ be given by (\ref{psg.0}). Given $a\in {\cal S}(\Lambda_{\Phi_0})$, let us consider following (\ref{psg.1}),
\begin{equation}
\label{comp.01}
a^w_{\Gamma}(x,hD_x)u(x)=\frac{1}{(2\pi h)^n}\int\!\!\!\int_{\Gamma(x)} e^{\frac{i}{h}(x-y)\cdot \theta} a\left(\frac{x+y}{2},\theta\right)u(y)\, dy\wedge \, d\theta.
\end{equation}
Here $u\in H_{\Phi_0}(\comp^n)$ and $\Gamma(x)\subset \comp^{2n}_{y,\theta}$ is the contour given by (\ref{psg.1.1}). Setting
\begeq
\label{comp.02}
a_{\Phi_0}(x) = a\left(x,\frac{2}{i}\frac{\partial \Phi_0}{\partial x}(x)\right), \quad x\in \comp^n,
\endeq
and recalling (\ref{psg.1.1.1}), we can write in view of (\ref{comp.01}),
\begin{multline}
\label{comp.03}
a^w_{\Gamma}(x,hD_x)u(x) \\ = \frac{2^n {\rm det}\,(\Phi''_{0,\overline{x}x})}{(2\pi h)^n}\int e^{\frac{1}{h}\left(\Phi_0(x) - \Phi_0(y) + 2i {\rm Im}\,\left((x-y)\cdot \partial_x \Phi_0\left(\frac{x+y}{2}\right)\right)\right)} a_{\Phi_0}\left(\frac{x+y}{2}\right) u(y)\, L(dy).
\end{multline}
Here we have also used that along $\Gamma(x)$, we have $dy\wedge d\theta = 2^n\, {\rm det}\,(\Phi''_{0,\overline{x}x}) L(dy)$, provided that the orientation has been chosen suitably.

\medskip
\noindent
Let $u,v\in H_{\Phi_0}(\comp^n)$, and let us set $U=e^{-\Phi_0/h}u \in L^2(\comp^n)$, $V=e^{-\Phi_0/h}v \in L^2(\comp^n)$. We get, using (\ref{comp.03}),
\begin{multline}
\label{comp.1}
\left(a^w_{\Gamma}(x,hD_x)u,v\right)_{H_{\Phi_0}} = \int a^w_{\Gamma}(x,hD)u(x)\, \overline{v(x)} e^{-\frac{2}{h}\Phi_0(x)}\, L(dx)\\
= \frac{2^n {\rm det}\,(\Phi''_{0,\overline{x}x})}{(2\pi h)^n} \int\!\!\!\int e^{\frac{2i}{h}{\rm Im}\,\left((x-y)\cdot \partial_x \Phi_0\left(\frac{x+y}{2}\right)\right)} a_{\Phi_0}\left(\frac{x+y}{2}\right) U(y) \overline{V(x)}\, L(dy)\, L(dx).
\end{multline}
Making the linear change of variables in (\ref{comp.1}),
$$
x'=\frac{x+y}{2}, \quad y' = x-y,
$$
where the absolute value of the Jacobian is $1$, we obtain after dropping the primes,
\begin{multline}
\label{comp.2}
\left(a^w_{\Gamma}(x,hD_x)u,v\right)_{H_{\Phi_0}} \\
= \frac{2^n {\rm det}\,(\Phi''_{0,\overline{x}x})}{(2\pi h)^n} \int\!\!\!\int e^{\frac{2i}{h}{\rm Im}\,\left(y\cdot \partial_x \Phi_0(x)\right)} a_{\Phi_0}(x) U\left(x - \frac{1}{2}y\right) \overline{V\left(x + \frac{1}{2}y\right)}\, L(dy)\, L(dx), \\
= \frac{2^n {\rm det}\,(\Phi''_{0,\overline{x}x})}{(2\pi h)^n} \int a_{\Phi_0}(x)\mathcal{K}(U,V)(x)\, L(dx).
\end{multline}
Here ${\cal K}(U,V)(x)$ is "the Wigner function" given by
\begin{equation}
\label{comp.3}
{\cal K}(U,V)(x)=\int e^{\frac{2i}{h}{\rm Im}\,\left(y\cdot \partial_x \Phi_0(x)\right)} U\left(x-\frac{1}{2}y\right)\, \overline{V\left(x+\frac{1}{2}y\right)}\, L(dy).
\end{equation}
Performing the change of variables $y \mapsto \widetilde{y} = x +\frac{1}{2}y$ in (\ref{comp.3}), we get after dropping the tilde,
\begin{multline}
\label{comp.4}
{\cal K}(U,V)(x) = 2^{2n} \int e^{\frac{4i}{h}{\rm Im}\,\left((y-x)\cdot \partial_x \Phi_0(x)\right)} U(2x-y) \overline{V(y)}\,L(dy) \\
 = 2^{2n}(\Sigma_{x}U,V)_{L^2({\bf C}^n)},
\end{multline}
where $\Sigma_x$, $x\in \comp^n$, is the unitary map on $L^2(\comp^n)$ given by
\begin{equation}
\label{comp.5}
(\Sigma_{x}U)(y) = e^{\frac{4i}{h}{\rm Im}\,\left((y-x)\cdot \partial_x \Phi_0(x)\right)} U(2x-y).
\end{equation}
We obtain, combining (\ref{comp.2}) and (\ref{comp.4}),
\begin{equation}
\label{comp.6}
\left(a^w_{\Gamma}(x,hD_x)u,v\right)_{H_{\Phi_0}} = \frac{2^n {\rm det}\,(\Phi''_{0,\overline{x}x})}{(2\pi h)^n} \int a_{\Phi_0}(x) 2^{2n}(\Sigma_{x}U,V)_{L^2({\bf C}^n)}\, L(dx),
\end{equation}
and therefore,
\begin{equation}
\label{comp.7}
e^{-\frac{\Phi_0}{h}} a^w_{\Gamma}(x,hD_x) e^{\frac{\Phi_0}{h}}  = \frac{2^n {\rm det}\,(\Phi''_{0,\overline{x}x})}{(2\pi h)^n}
\int a_{\Phi_0}(x) 2^{2n}\Sigma_{x}\,L(dx).
\end{equation}
Here we may notice that the realization $a^w_{\Gamma}(x,hD_x)$ of the operator $a^w(x,hD_x)$ in (\ref{comp.01}) acts on the weighted $L^2$--space $L^2(\comp^n, e^{-2\Phi_0/h}L(dx))$, whereas $a^w(x,hD_x)$ is defined on the holomorphic subspace only. 
The decomposition (\ref{comp.7}) can be regarded as the complex analogue of the corresponding representation obtained in~\cite[Chapter 2]{Le10} in the real domain. When deriving an explicit formula for the composition $a^w(x,hD_x)\circ b^w(x,hD_x)$, for $a,b\in {\cal S}(\Lambda_{\Phi_0})$, we shall proceed by computing first the composition $\Sigma_{y}\circ \Sigma_{z}$ for $y,z\in \comp^n$.

\bigskip
\noindent
When doing so, let us consider the decomposition
\begin{equation}
\label{comp.8}
\Phi_0 = \Phi_{{\rm herm}} + \Phi_{\rm plh},
\end{equation}
where $\Phi_{{\rm herm}}(x) = \Phi''_{0,\overline{x}x}x\cdot \overline{x}$ is positive definite Hermitian and $\Phi_{{\rm plh}}(x) = {\rm Re}\, \left(\Phi''_{0,xx}x\cdot x\right)$ is pluriharmonic. Let
$$
A = \frac{2}{i} \left(\Phi_{{\rm plh}}\right)''_{xx} = \frac{2}{i} \Phi''_{0,xx}.
$$
The complex linear canonical transformation
\begeq
\label{comp.8.1}
\comp^{2n} \ni (y,\eta) \mapsto \kappa_A(y,\eta) = (y, \eta - Ay) \in \comp^{2n}
\endeq
satisfies
$$
\kappa_A\left(\Lambda_{\Phi_0}\right) = \Lambda_{\Phi_{{\rm herm}}},
$$
and associated to $\kappa_A$ is the metaplectic Fourier integral operator
\begeq
\label{comp.8.2}
{\cal U}u = ue^{-f}, \quad f(x) = \Phi''_{0,xx}x\cdot x,
\endeq
which maps $H_{\Phi_0}(\comp^n)$ unitarily onto $H_{\Phi_{{\rm herm}}}(\comp^n)$. By an application of the exact Egorov theorem we get
$$
{\cal U}\circ a^w(x,hD_x) \circ {\cal U}^{-1} = b^w(x,hD_x),
$$
where $b\in {\cal S}(\Lambda_{\Phi_{{\rm herm}}})$ is given by $b = a\circ \kappa_A^{-1}$. Conjugating $a^w(x,hD_x)$ by the operator ${\cal U}$ in (\ref{comp.8.2}), we obtain a reduction to the case when the pluriharmonic part of $\Phi_0$ vanishes, and in what follows, we shall therefore make this assumption.

\medskip
\noindent
The unitary map $\Sigma_x$ in (\ref{comp.5}) takes the form
\begin{multline}
\label{comp.8.3}
(\Sigma_x U)(y) = e^{\frac{4i}{h}{\rm Im}\,\left(\Phi''_{0,\overline{x}x}(y-x)\cdot \overline{x}\right)} U(2x-y) \\=
e^{\frac{4i}{h}{\rm Im}\,\left(\Phi''_{0,\overline{x}x}y \cdot \overline{x}\right)} U(2x-y) = e^{\frac{4i}{h}{\rm Im}\,\Psi_0(y,\overline{x})} U(2x-y),
\end{multline}
where $\Psi_0$ is the polarization of $\Phi_0$, i.e., the unique holomorphic quadratic form on $\comp^n_x \times \comp^n_y$ such that $\Psi_0(x,\overline{x}) = \Phi_0(x)$.

\begin{lemma}\label{compSym}
Let $\Phi_0$ be a strictly plurisubharmonic quadratic form on $\comp^n$ with vanishing pluriharmonic part. We have for $y,z\in \comp^n$,
\begin{equation}
\label{comp.10}
\Sigma_{y} \circ\Sigma_{z}= \frac{2^n {\rm det}\,(\Phi''_{0,\overline{x}x})}{(2\pi h)^n} \int e^{\frac{8i}{h}{\rm Im}\, \Psi_0\left(x-y,\overline{x}- \overline{z}\right)} 2^{2n} \Sigma_{x}\,L(dx).
\end{equation}
Here $\Psi_0$ is the polarization of $\Phi_0$.
\end{lemma}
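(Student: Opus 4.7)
The plan is to verify the claimed identity by applying both sides to an arbitrary $U$ at an arbitrary point $w\in\comp^n$. The left hand side is immediate from \eqref{comp.8.3}:
$$
(\Sigma_y\Sigma_z U)(w) = e^{iA/h}\, U\bigl(w+2(z-y)\bigr), \qquad A := 4\,{\rm Im}\,\Psi_0(w,\overline{y}) + 4\,{\rm Im}\,\Psi_0(2y-w,\overline{z}).
$$
That is, $\Sigma_y\Sigma_z$ is a ``magnetic translation'' by $2(z-y)$ multiplied by a quadratic phase. For the right hand side, I would insert \eqref{comp.8.3} into the integrand and then perform the change of variables $u = 2x - w$ (with Jacobian $2^{-2n}$), bringing the right hand side into the form
$$
{\rm RHS}(w) = \frac{2^n\det(\Phi''_{0,\overline{x}x})}{(2\pi h)^n}\int e^{\frac{i}{h}\Phi(u)}\, U(u)\, L(du),
$$
where the phase $\Phi$ is a sum of ${\rm Im}\,\Psi_0(\cdot,\cdot)$ terms evaluated at combinations of $u,w,y,z$.

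The crux is then to expand $\Phi(u)$ by bilinearity of $\Psi_0$ in its two slots. The quadratic-in-$u$ contribution collapses to $2\,{\rm Im}\,\Psi_0(u,\overline{u}) = 2\,{\rm Im}\,\Phi_0(u)$, which vanishes because $\Phi_0$ is assumed to have no pluriharmonic part and is therefore real-valued. Hence $\Phi$ is an affine function of $u$. The key algebraic step is to verify that its linear part equals $2\,{\rm Im}\,\Psi_0\bigl(u,\, 2\overline{y}-2\overline{z}-\overline{w}\bigr)$ and that its constant part equals exactly $A$. Both identities follow mechanically from the bilinearity together with the conjugation relation $\overline{\Psi_0(a,\overline{b})} = \Psi_0(b,\overline{a})$, which gives ${\rm Im}\,\Psi_0(a,\overline{b}) = -{\rm Im}\,\Psi_0(b,\overline{a})$ and, in particular, ${\rm Im}\,\Psi_0(\zeta,\overline{\zeta})=0$ for every $\zeta\in\comp^n$.

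Introducing $u_0 := w+2(z-y)$, we have $\overline{u}_0 = -(2\overline{y}-2\overline{z}-\overline{w})$ and hence ${\rm Im}\,\Psi_0(u,2\overline{y}-2\overline{z}-\overline{w}) = {\rm Im}\,\Psi_0(u_0,\overline{u})$ by the same conjugation relation. Substituting this in, the right hand side reduces to
$$
e^{iA/h}\cdot\frac{2^n\det(\Phi''_{0,\overline{x}x})}{(2\pi h)^n}\int e^{\frac{2i}{h}\,{\rm Im}\,\Psi_0(u_0,\overline{u})}\, U(u)\, L(du).
$$
I would then recognize the remaining oscillatory integral as the reproducing/Fourier inversion identity coming from \eqref{comp.7} specialized to the symbol $a\equiv 1$: since $1^{w}_{\Gamma}$ acts as the identity on $H_{\Phi_0}$, the same change of variables $u = 2x - u_0$ applied to \eqref{comp.7} shows that the integral above equals $U(u_0)$ for $U$ in the image of $H_{\Phi_0}$ under the conjugation $f\mapsto e^{-\Phi_0/h}f$. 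The two sides therefore coincide with $e^{iA/h}\,U(u_0)$, completing the verification.

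The main obstacle is the bookkeeping in the phase expansion, ensuring that the linear and constant parts of $\Phi(u)$ come out exactly as required. This is entirely mechanical once one systematically rewrites each ${\rm Im}\,\Psi_0(a,\overline{b})$ using the two identities ${\rm Im}\,\Psi_0(a,\overline{b}) = -{\rm Im}\,\Psi_0(b,\overline{a})$ and ${\rm Im}\,\Psi_0(\zeta,\overline{\zeta})=0$, but it is the only non-trivial check in the argument; the rest is bilinearity and a single change of variables, followed by an appeal to the identity case of \eqref{comp.7}.
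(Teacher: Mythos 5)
Your proof is correct and follows essentially the same route as the paper's: compute $\Sigma_y\circ\Sigma_z$ directly from \eqref{comp.8.3}, substitute \eqref{comp.8.3} into the integral on the right, perform the change of variables $u=2x-w$ (the paper's $\zeta=2x-y'$), expand the phase by bilinearity and skew-symmetry of ${\rm Im}\,\Psi_0$, and conclude by the reproducing identity obtained from \eqref{comp.7} with $a\equiv 1$. The only organizational difference is that you change variables first and then sort the phase into quadratic, linear, and constant parts, whereas the paper pulls out the constant factor $e^{\frac{8i}{h}{\rm Im}\,\Psi_0(y,\overline z)}$ first and then changes variables; the algebra is identical.

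One small inaccuracy in your explanation: the vanishing of the quadratic-in-$u$ part $2\,{\rm Im}\,\Psi_0(u,\overline u)=2\,{\rm Im}\,\Phi_0(u)$ does not follow from the assumption that the pluriharmonic part of $\Phi_0$ vanishes, but simply from the fact that $\Phi_0$ is real-valued by definition (it is an $L^2$ weight). The hypothesis on the pluriharmonic part is used earlier, in deriving the simplified expression \eqref{comp.8.3} for $\Sigma_x$ from \eqref{comp.5}; once you invoke \eqref{comp.8.3} you have already used it, and it plays no further role in the phase bookkeeping.
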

\begin{proof}
By a direct computation, using (\ref{comp.8.3}), we get
\begin{equation}
\label{comp.11}
\left(\Sigma_{y}\circ\Sigma_{z} U\right)(y') = e^{\frac{4i}{h}{\rm Im}\, \Psi_0(y', \overline{y} - \overline{z})}
e^{\frac{8i}{h} {\rm Im}\, \Psi_0(y,\overline{z})}\, U(y'-2y+2z).
\end{equation}
On the other hand, the operator in the right hand side of (\ref{comp.10}) acting on $U$, is given by
\begin{multline}
\label{comp.12}
\left(\mathbb{L} U\right)(y'):= \frac{2^n {\rm det}\,(\Phi''_{0,\overline{x}x})}{(2\pi h)^n} \int e^{\frac{8i}{h}{\rm Im}\, \Psi_0\left(x-y,\overline{x}- \overline{z}\right)} 2^{2n} \left(\Sigma_{x}U\right)(y')\,L(dx) \\
= \frac{2^n {\rm det}\,(\Phi''_{0,\overline{x}x})}{(2\pi h)^n} e^{\frac{8i}{h} {\rm Im}\, \Psi_0(y,\overline{z})} \int 2^{2n} e^{\frac{4i}{h}{\rm Im}\, \Psi_0\left(y'-2y+2z,\overline{x}\right)} U(2x-y')\, L(dx).
\end{multline}
Here we have also used the skew-symmetry property ${\rm Im}\, \Psi_0(x,\overline{z}) = -{\rm Im}\, \Psi_0(z,\overline{x})$. Making the change of variables $\zeta =2x-y'$ in (\ref{comp.12}), we get
\begin{multline}
\label{comp.13}
\left(\mathbb{L} U\right)(y') = \frac{2^n {\rm det}\,(\Phi''_{0,\overline{x}x})}{(2\pi h)^n} e^{\frac{8i}{h} {\rm Im}\, \Psi_0(y,\overline{z})}
\int e^{\frac{4i}{h}{\rm Im}\, \Psi_0(y'-2y+2z,\frac{\overline{y'} + \overline{\zeta}}{2})} U(\zeta)\, L(d\zeta) \\
= \frac{2^n {\rm det}\,(\Phi''_{0,\overline{x}x})}{(2\pi h)^n} e^{\frac{8i}{h} {\rm Im}\, \Psi_0(y,\overline{z})}
e^{\frac{4i}{h} {\rm Im}\, \Psi_0(y',\overline{y} - \overline{z})}
\int e^{\frac{2i}{h}{\rm Im}\, \Psi_0(y'-2y+2z,\overline{\zeta})} U(\zeta)\, L(d\zeta).
\end{multline}
On the other hand, taking $a=1$ in (\ref{comp.7}), we obtain for $W\in L^2(\comp^n)$ such that $e^{\Phi_0/h}W\in H_{\Phi_0}(\comp^n)$,
\begin{multline}
\label{comp.14}
W(y') = \frac{2^n {\rm det}\,(\Phi''_{0,\overline{x}x})}{(2\pi h)^n} \int 2^{2n} e^{\frac{4i}{h}{\rm Im}\, \Psi_0(y',\overline{x})} W(2x-y')\,L(dx)\\
= \frac{2^n {\rm det}\,(\Phi''_{0,\overline{x}x})}{(2\pi h)^n} \int W(\zeta)e^{\frac{2i}{h}{\rm Im}\, \Psi_0(y',\overline{\zeta})} \,L(d\zeta).
\end{multline}
Here on the second line we have again made the change of variables $\zeta=2x-y'$. Using (\ref{comp.14}) we conclude that the expression in the right hand side of (\ref{comp.13}) becomes
$$
e^{\frac{8i}{h} {\rm Im}\, \Psi_0(y,\overline{z})}
e^{\frac{4i}{h} {\rm Im}\, \Psi_0(y',\overline{y} - \overline{z})} U(y'-2y+2z),
$$
which agrees with $\left(\Sigma_{y}\circ\Sigma_{z} U\right)(y')$, in view of (\ref{comp.11}). The proof is complete.
\end{proof}

\bigskip
\noindent
We are now ready to compute the composition of two Weyl quantizations. Let $a,b\in {\cal S}(\Lambda_{\Lambda_{\Phi_0}})$, and let us write following (\ref{comp.7}),
$$
e^{-\frac{\Phi_0}{h}} a^w_{\Gamma}(x,hD_x) e^{\frac{\Phi_0}{h}}  = \frac{{\rm det}\,(\Phi''_{0,\overline{x}x})}{(\pi h)^n}
\int a_{\Phi_0}(y) 2^{2n}\Sigma_{y}\,L(dy),
$$
$$
e^{-\frac{\Phi_0}{h}} b^w_{\Gamma}(x,hD_x) e^{\frac{\Phi_0}{h}}  = \frac{{\rm det}\,(\Phi''_{0,\overline{x}x})}{(\pi h)^n}
\int b_{\Phi_0}(z) 2^{2n}\Sigma_{z}\,L(dz).
$$
Using (\ref{comp.10}), we get
\begin{multline}
\label{comp.14.2}
e^{-\frac{\Phi_0}{h}} a^w_{\Gamma}(x,hD_x)\circ b^w_{\Gamma}(x,hD_x) e^{\frac{\Phi_0}{h}} \\
= \left(\frac{{\rm det}\,(\Phi''_{0,\overline{x}x})}{(\pi h)^n}\right)^2
\int\!\!\!\int a_{\Phi_0}(y) b_{\Phi_0}(z)\, 2^{4n}\, \Sigma_{y}\circ \Sigma_{z}\,L(dy)\,L(dz) \\
= \left(\frac{{\rm det}\,(\Phi''_{0,\overline{x}x})}{(\pi h)^n}\right)^3 \int\!\!\!\int\!\!\!\int
a_{\Phi_0}(y) b_{\Phi_0}(z)\, 2^{4n}\, e^{\frac{8i}{h}{\rm Im}\, \Psi_0\left(x-y,\overline{x}- \overline{z}\right)} 2^{2n} \Sigma_{x}\,L(dx)\, L(dy)\, L(dz),
\end{multline}
and therefore the operator $c^w(x,hD_x) = a^w(x,hD_x)\circ b^w(x,hD_x)$ satisfies
\begeq
\label{comp.14.3}
e^{-\frac{\Phi_0}{h}} c^w_{\Gamma}(x,hD_x) e^{\frac{\Phi_0}{h}}  = \frac{{\rm det}\,(\Phi''_{0,\overline{x}x})}{(\pi h)^n}
\int c_{\Phi_0}(x) 2^{2n}\Sigma_{x}\,L(dx),
\endeq
where
\begin{multline}
\label{comp.14.4}
c_{\Phi_0}(x) = \left(\frac{{\rm det}\,(\Phi''_{0,\overline{x}x})}{(\pi h)^n}\right)^2 \int\!\!\!\int
a_{\Phi_0}(y) b_{\Phi_0}(z)\, 2^{4n}\, e^{\frac{8i}{h}{\rm Im}\, \Psi_0\left(x-y,\overline{x}- \overline{z}\right)}\, L(dy)\, L(dz) \\
= \left(\frac{{\rm det}\,(\Phi''_{0,\overline{x}x})}{(\pi h)^n}\right)^2 \int\!\!\!\int
a_{\Phi_0}(x + y)\, b_{\Phi_0}(x + z)\, 2^{4n}\, e^{\frac{8i}{h}{\rm Im}\, \Psi_0\left(y,\overline{z}\right)}\, L(dy)\, L(dz).
\end{multline}
Let us rewrite (\ref{comp.14.4}) in more invariant terms. When doing so, we make the following two observations.
\begin{itemize}
\item[(i)] The restriction of the complex symplectic $(2,0)$--form $\sigma$ on $\comp^{2n}$ to $\Lambda_{\Phi_0}$ is given by
$$
\sigma(Y,Z) = -4{\rm Im}\, \left(\Phi''_{0,\overline{x}x}y\cdot \overline{z}\right) = -4 {\rm Im}\, \Psi_0(y,\overline{z}),
$$
where $Y,Z\in \Lambda_{\Phi_0}$ are the points in $\Lambda_{\Phi_0}$ above $y,z\in \comp^n$, respectively.
\item[(ii)] The symplectic volume form on $\Lambda_{\Phi_0}$, $\displaystyle \frac{\sigma^n}{n!}|_{\Lambda_{\Phi_0}}$, is equal to
$$
dX = 2^{2n} {\rm det}\,(\Phi''_{0,\overline{x}x})\, L(dx),\quad X = \left(x,\frac{2}{i}\frac{\partial \Phi_0}{\partial x}(x)\right)\in \Lambda_{\Phi_0},
$$
see also (\ref{comp.03}) and the following comment. 
\end{itemize}
We get therefore from (\ref{comp.14.4}),
\begeq
\label{comp14.6}
c(X) = (a\#b)(X) =  \frac{1}{(\pi h)^{2n}} \int\!\!\!\int_{\Lambda_{\Phi_0} \times \Lambda_{\Phi_0}} e^{-2i\sigma(Y,Z)/h} a(X+Y)b(X+Z)\, dY\,dZ.
\endeq

\medskip
\noindent
{\it Remark}. The integral representation formula (\ref{comp14.6}) can also be obtained directly from the corresponding formula in the real domain~\cite{Ho85},~\cite[Chapter 4]{Zw12}, thanks to the metaplectic invariance of the Weyl calculus~\cite{Sj96},~\cite{HiSj15}.

\bigskip
\noindent
We would next like to rewrite the expression (\ref{comp.14.4}) for $c_{\Phi_0}$ in terms of a suitable Gaussian Fourier multiplier on $\comp^{2n}$, acting on $a_{\Phi_0}\otimes b_{\Phi_0}$, similarly to the Weyl composition formula in the real domain~\cite{DiSj99}. To this end, introducing the positive definite Hermitian matrix $B = \Phi''_{0,\overline{x}x}$ and performing the change of variables
$$
Y = 2B^{1/2}y,\quad Z = 2B^{1/2}z,
$$
in (\ref{comp.14.4}), we obtain
\begin{multline}
\label{comp.15}
c_{\Phi_0}(x) = \frac{1}{(\pi h)^{2n}} \int\!\!\!\int a_{\Phi_0}\left(x + \frac{1}{2}B^{-1/2}Y\right)\,
b_{\Phi_0}\left(x + \frac{1}{2}B^{-1/2}Z\right) \, e^{\frac{2i}{h}{\rm Im}\,(Y\cdot \overline{Z})}\, L(dY)\, L(dZ) \\
=\frac{1}{(\pi h)^{2n}} \int\!\!\!\int a_{\Phi_0}\left(x + \frac{1}{2}B^{-1/2}Y\right)\,
b_{\Phi_0}\left(x + \frac{1}{2}B^{-1/2}Z\right) \, e^{-\frac{2i}{h}\sigma_{{\mathbf R}}(Z,Y)}\, L(dY)\, L(dZ).
\end{multline}
Here we have noticed that
\begeq
\label{comp.15.1}
{\rm Im}\,(Y\cdot \overline{Z}) = \sigma_{{\mathbf R}}(Y,Z) = -\sigma_{{\mathbf R}}(Z,Y),
\endeq
where $\sigma_{{\mathbf R}}$ is the standard symplectic form on $\real^{2n}$, when identifying this space with $\comp^n$ with the help of the map $\comp^n \ni Y = y + i\eta \mapsto (y,\eta)\in \real^{2n}$. Recall next that if $A$ is an $N\times N$ real symmetric non-degenerate matrix, we have for $u\in {\cal S}(\real^N)$,
\begeq
\label{comp.15.2}
e^{\frac{ih}{2} AD\cdot D} u(x) = \frac{1}{(2\pi h)^{N/2}} \frac{e^{\frac{i\pi}{4}{\rm sgn} A}}{\abs{{\rm det}\, A}^{1/2}} \int e^{-\frac{i}{2h}A^{-1}y\cdot y} u(x+y)\, dy.
\endeq
Applying (\ref{comp.15.2}) with $\real^{N} =  \real^{2n}_{z,\zeta} \times \real^{2n}_{y,\eta}$, and
$$
AD\cdot D = \sigma_{{\mathbf R}}(D_z,D_{\zeta};D_y,D_{\eta}) = D_{\zeta}\cdot D_y - D_{\eta}\cdot D_z,
$$
we get using the complex notation $Y = y + i\eta$, $Z = z + i\zeta$,
\begeq
\label{comp.15.3}
\left(e^{\frac{ih}{2} \sigma_{{\mathbf R}}(D_z,D_{\zeta};D_y,D_{\eta})} u(Z,Y)\right)|_{Z = Y = 0} = \frac{1}{(\pi h)^{2n}} \int\!\!\!\int e^{-\frac{2i}{h} \sigma_{{\mathbf R}}(Z,Y)} u(Z,Y)\, L(dY)\, L(dZ).
\endeq
Here we also have
\begeq
\label{comp15.4}
\sigma_{{\mathbf R}}(D_z,D_{\zeta};D_y,D_{\eta}) = \frac{2}{i}\left(D_{\overline{Z}}\cdot D_Y - D_Z \cdot D_{\overline{Y}}\right) =  \frac{2}{i} \sigma_{{\mathbf R}}(D_Z,D_{\overline{Z}};D_Y,D_{\overline{Y}}),
\endeq
where
$$
D_Y = \frac{1}{2} \left(D_y -iD_{\eta}\right), \quad D_{\overline{Y}} = \frac{1}{2} \left(D_y + iD_{\eta}\right),
$$
with $D_Z$, $D_{\overline{Z}}$ being defined similarly, see also (\ref{uni.1.1}). Combining (\ref{comp.15}), (\ref{comp.15.3}), and (\ref{comp15.4}), we get
\begin{multline}
\label{comp.16}
c_{\Phi_0}(x) = e^{h\sigma_{{\mathbf R}}(D_Z,D_{\overline{Z}};D_Y,D_{\overline{Y}})} \left(a_{\Phi_0}\left(x + \frac{1}{2}B^{-1/2}Y\right)\,
b_{\Phi_0}\left(x + \frac{1}{2}B^{-1/2}Z\right)\right)|_{Y = Z = 0} \\
=\exp\left(\frac{ih}{2}\frac{\left({}^tB^{-1} D_x\cdot D_{\overline{y}} - {}^tB^{-1} D_y\cdot D_{\overline{x}}\right)}{2i}\right)\left(a_{\Phi_0}(x)b_{\Phi_0}(y)\right)|_{y=x}.
\end{multline}
Here the symbol of the second order constant coefficient differential operator on $\comp^{2n}_{x,y}$,
$$
\frac{1}{2i}\left({}^tB^{-1} D_x\cdot D_{\overline{y}} - {}^tB^{-1} D_y\cdot D_{\overline{x}}\right)
$$
is a quadratic form on $\comp^{2n}_{\xi,\eta}$ given by
\begeq
\label{comp.16.1}
\frac{1}{8i}\left({}^tB^{-1}\overline{\xi}\cdot \eta - {}^tB^{-1} \overline{\eta}\cdot \xi\right) = -\frac{1}{4} {\rm Im}\, \left(B^{-1}\xi \cdot \overline{\eta}\right).
\endeq
Letting
\begeq
\label{comp.16.2}
\sigma_{\Phi_0} = \frac{2}{i} \sum_{j,k=1}^n \frac{\partial^2 \Phi_0}{\partial \overline{x}_j \partial x_k} d\overline{x}_j \wedge dx_k
\endeq
be the pullback of the complex symplectic form $\sigma$ on $\comp^{2n}$ under the map
$$
\comp^n \ni x \mapsto \left(x,\frac{2}{i}\frac{\partial \Phi_0}{\partial x}(x)\right)\in \Lambda_{\Phi_0} \subset \comp^{2n},
$$
we see that
$$
\sigma_{\Phi_0}(\xi,\eta) = -4 {\rm Im}\left(B\xi \cdot \overline{\eta}\right),\quad \xi,\eta \in \comp^n,
$$
and therefore the quadratic form in (\ref{comp.16.1}) can be regarded as the dual to $\sigma_{\Phi_0}$, when the latter is viewed as a quadratic form on $\comp^{2n}_{\xi,\eta}$. Setting
$$
\sigma^{-1}_{\Phi_0}(\xi,\eta)= -\frac{1}{4}{\rm Im}\, \left(B^{-1}\xi \cdot \overline{\eta}\right),
$$
we may summarize the discussion above in the following result.
\begin{prop}
\label{comp0}
Let $a,b\in {\cal S}(\Lambda_{\Phi_0})$ and let $c^w(x,hD_x) = a^w(x,hD_x)\circ b^w(x,hD_x)$. The symbol $c\in {\cal S}(\Lambda_{\Phi_0})$ is given by
\begeq
\label{comp.17}
c(X) = (a\#b)(X) =  \frac{1}{(\pi h)^{2n}} \int\!\!\!\int_{\Lambda_{\Phi_0} \times \Lambda_{\Phi_0}} e^{-2i\sigma(Y,Z)/h} a(X+Y)b(X+Z)\, dY\,dZ.
\endeq
We also have
\begin{equation}
\label{comp.18}
c_{\Phi_0}(x) = \exp\left(\frac{ih}{2}\sigma_{\Phi_0}^{-1}(D_{x,\overline{x}},D_{y,\overline{y}})\right)(a_{\Phi_0}(x)b_{\Phi_0}(y))_{\big|y=x}.
\end{equation}
\end{prop}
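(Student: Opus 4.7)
The plan is to assemble the proof from the phase-symmetry machinery developed in the lead-up to the proposition. First, I would reduce to the case where the pluriharmonic part of $\Phi_0$ vanishes. This reduction uses the complex linear canonical transformation $\kappa_A$ in (\ref{comp.8.1}) and its quantization, the metaplectic Fourier integral operator $\mathcal{U}$ in (\ref{comp.8.2}), which intertwines $H_{\Phi_0}(\comp^n)$ and $H_{\Phi_{\mathrm{herm}}}(\comp^n)$ unitarily and conjugates $a^w(x,hD_x)$ to $(a\circ \kappa_A^{-1})^w(x,hD_x)$ via the exact Egorov theorem. Since both statements (\ref{comp.17}) and (\ref{comp.18}) are invariant under complex linear canonical transformations (the form $\sigma$ and the operator $\sigma_{\Phi_0}^{-1}(D_{x,\overline x},D_{y,\overline y})$ transform correctly), it is enough to prove them in the normalized situation $\Phi_0 = \Phi_{\mathrm{herm}}$.

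With this reduction in place, I would represent both $a^w_\Gamma(x,hD_x)$ and $b^w_\Gamma(x,hD_x)$, conjugated by $e^{\Phi_0/h}$, as superpositions of the phase symmetries $\Sigma_x$, using (\ref{comp.7}) and (\ref{comp.8.3}). Composing the two superpositions and applying Lemma \ref{compSym} to reduce $\Sigma_y \circ \Sigma_z$ to an integral over $\Sigma_x$ yields, after a Fubini swap and a change of variables $y \mapsto x+y$, $z\mapsto x+z$, precisely the formula (\ref{comp.14.4}) for $c_{\Phi_0}(x)$. This establishes (\ref{comp.17}) once we reinterpret the integrand and measure invariantly: by observation (i) above, the phase factor $e^{(8i/h)\,\mathrm{Im}\,\Psi_0(y,\overline z)}$ is exactly $e^{-2i\sigma(Y,Z)/h}$, where $Y,Z$ are the lifts of $y,z$ to $\Lambda_{\Phi_0}$, and by observation (ii), the measure $2^{4n}\det(\Phi''_{0,\overline x x})^2\, L(dy)\,L(dz)$ is the product symplectic volume $dY\,dZ$ on $\Lambda_{\Phi_0}\times \Lambda_{\Phi_0}$.

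For the Gaussian Fourier multiplier form (\ref{comp.18}), I would substitute $Y=2B^{1/2}y$, $Z=2B^{1/2}z$ in (\ref{comp.14.4}) with $B=\Phi''_{0,\overline x x}$ to obtain (\ref{comp.15}), where the phase has been normalized to $-\tfrac{2i}{h}\sigma_{\mathbf{R}}(Z,Y)$ via (\ref{comp.15.1}). Identifying the resulting oscillatory integral with the Gaussian Fourier multiplier via (\ref{comp.15.2}) applied to the symmetric nondegenerate form $\sigma_{\mathbf{R}}(D_z,D_\zeta;D_y,D_\eta)$, and rewriting the Wirtinger form of this operator through (\ref{comp15.4}), yields (\ref{comp.16}). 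Finally, the quadratic form computation (\ref{comp.16.1}) together with the definition (\ref{comp.16.2}) of $\sigma_{\Phi_0}$ identifies the exponent with $\tfrac{ih}{2}\sigma_{\Phi_0}^{-1}(D_{x,\overline x},D_{y,\overline y})$, producing (\ref{comp.18}).

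The only delicate points are bookkeeping: tracking the factors of $2$, the determinant $\det(\Phi''_{0,\overline x x})$, and the orientation convention that fixes $dy\wedge d\theta = 2^n\det(\Phi''_{0,\overline x x})\,L(dy)$ along $\Gamma(x)$, together with verifying that the metaplectic reduction really preserves both expressions. The main conceptual obstacle is checking the invariance of $\sigma_{\Phi_0}^{-1}$ under $\kappa_A$ in the reduction step; this follows because $\kappa_A$ is a complex linear canonical transformation, so it preserves $\sigma$ and hence its restriction to the I-Lagrangian, together with the induced dual quadratic form on the cotangent directions. Once these bookkeeping items are in order, both (\ref{comp.17}) and (\ref{comp.18}) drop out of the computations already carried out between (\ref{comp.14.2}) and (\ref{comp.16.2}).
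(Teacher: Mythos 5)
Your proposal is correct and follows the paper's argument essentially step by step: the metaplectic reduction to vanishing pluriharmonic part via $\kappa_A$ and $\mathcal{U}$, the phase-symmetry superposition (\ref{comp.7}), Lemma \ref{compSym} yielding (\ref{comp.14.4}), the invariant reinterpretation via observations (i)--(ii) to get (\ref{comp.17}), and the change of variables plus (\ref{comp.15.2})--(\ref{comp15.4}) to get (\ref{comp.18}). The one place you go beyond the paper is in making explicit the covariance check that (\ref{comp.17}) and (\ref{comp.18}) are preserved under the reduction, which the paper leaves implicit; note that the cleanest way to see this for (\ref{comp.18}) is that $\kappa_A$ fixes the $x$-coordinate, the restrictions $a_{\Phi_0}$, $b_{\Phi_0}$, $c_{\Phi_0}$ as functions of $x\in\comp^n$ are unchanged, and $\sigma_{\Phi_0}$ depends only on the Hermitian part $\Phi''_{0,\overline{x}x}$, hence $\sigma_{\Phi_0}=\sigma_{\Phi_{\mathrm{herm}}}$.
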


\medskip
\noindent
{\it Remark}. We refer to the recent work~\cite{HiLaSjZe20} for an alternative approach to the composition formulas for the semiclassical Weyl calculus in the complex domain, based on the Fourier inversion formula on $\Lambda_{\Phi_0}$ and the method of magnetic translations.

\bigskip
\noindent
We shall finish this subsection by discussing the composition formula (\ref{comp.17}) in the case when $a,b\in {\cal G}^s_b(\Lambda_{\Phi_0})$, for some $s>1$. It has been established in~\cite{La88},~\cite{LaLa97}, working in the real domain, that we then have
$c = a\#b \in {\cal G}^s_b(\Lambda_{\Phi_0})$. The argument in~\cite{La88} proceeds by repeated partial integrations and suitable quasinorm estimates, and our purpose here is to provide an alternative approach to the proof of this result, making use of the method of contour deformations. When doing so, rather than working with Gevrey symbols of $\Lambda_{\Phi_0}$, in view of the metaplectic invariance of the Weyl calculus~\cite{Sj96},~\cite{HiSj15}, it will be sufficient for us to work on $\real^m \simeq T^*\real^n$, where $m=2n$.

\medskip
\noindent
Let $a,b\in {\cal G}^s_b(\real^m)$ and let us set following (\ref{comp.17}),
\begeq
\label{comp.18.1}
c(X) = (a\#b)(X) =  \frac{1}{(\pi h)^m} \int\!\!\!\int_{{\bf R}^m \times {\bf R}^m} e^{-2i\sigma(Y,Z)/h} a(X+Y)b(X+Z)\, dY\,dZ.
\endeq
Here $\sigma$ is the standard symplectic form on $\real^m$, and the integral in (\ref{comp.18.1}) is an oscillatory one. Let $\chi\in {\cal G}^s_0(\real^{2m})$ be such that $\chi(Y,Z) = 1$ for $\abs{(Y,Z)}\leq 1$, with ${\rm supp}\, \chi \subset B(0,2)$, and define also
\begeq
\label{comp.18.2}
r_{\chi}(X)=\frac{1}{(\pi h)^m} \int\!\!\!\int_{{\bf R}^m \times {\bf R}^m} e^{-2i\sigma(Y,Z)/h} \left(1-\chi(Y,Z)\right)a(X+Y)b(X+Z)\, dY\,dZ.
\end{equation}

\noindent
The standard semiclassical calculus~\cite{DiSj99} gives that $\norm{\partial^{\alpha}r_{\chi}}_{L^{\infty}({\bf R}^m)} =
{\cal O}_{\alpha}(h^{\infty})$, for all $\alpha \in \nat^m$, and we would like to sharpen these asymptotic bounds, thanks to the Gevrey smoothness of the symbols $a,b$. To this end, we have the following result, due to~\cite{La88},~\cite{LaLa97}.
\begin{prop}
\label{Gevrey_remainder}
Let $a,b\in {\cal G}^s_b(\real^m)$, for some $s>1$, and let us define $r_{\chi}\in C^{\infty}(\real^m)$ as in {\rm (\ref{comp.18.2})}. There exists $C>0$ such that for all $\alpha\in \nat^m$ and $h\in (0,1]$, we have
\begin{equation}
\label{comp.18.3}
\abs{\partial^\alpha r_\chi(X)} \leq C^{1+|\alpha|}(\alpha!)^s\,\exp\left(-\frac{1}{\mathcal{O}(1)}h^{-\frac{1}{s}}\right),\quad X\in \real^m,
\end{equation}
\end{prop}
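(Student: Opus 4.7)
The plan is non-stationary phase integration by parts, with the number $N$ of integrations optimized to $N\asymp h^{-1/s}$; this choice precisely balances the Gevrey factorial growth $(N!)^s$ against the decay $h^N$ gained at each step. To begin, I would reduce the full Gevrey estimate (\ref{comp.18.3}) to the pointwise sup-bound at $\alpha=0$. Differentiating under the integral sign in (\ref{comp.18.2}), derivatives act only on $a(X+Y)$ and $b(X+Z)$, so $\partial_X^\alpha r_\chi(X)$ becomes a finite sum over $\beta+\gamma=\alpha$ of integrals of the same shape, with $a,b$ replaced by $\partial^\beta a,\partial^\gamma b\in{\cal G}^s_b(\real^m)$. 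By the Gevrey Leibniz inequality $\sum_{\beta+\gamma=\alpha}\binom{\alpha}{\beta}(\beta!)^s(\gamma!)^s \le \widetilde C^{|\alpha|}(\alpha!)^s$ (which uses $s\ge 1$), it suffices to prove $|r_\chi(X)|\le C_0\,\exp(-h^{-1/s}/C_0)$ uniformly in $a,b$ with fixed Gevrey constants.

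Setting $\varphi(Y,Z):=-2\sigma(Y,Z)$, I compute $|\nabla_{Y,Z}\varphi|^2=4(|Y|^2+|Z|^2)$, which is bounded below by $4$ on $\supp(1-\chi)\subset\{|(Y,Z)|\ge 1\}$. The operator $L:=(h/i|\nabla\varphi|^2)\,\nabla\varphi\cdot\nabla_{Y,Z}$ satisfies $L(e^{-2i\sigma/h})=e^{-2i\sigma/h}$, and $N$-fold integration by parts (with no boundary contribution, since $1-\chi$ vanishes near the origin and the phase is of modulus one) rewrites
\[
r_\chi(X)=\frac{1}{(\pi h)^m}\int\!\!\!\int e^{-2i\sigma/h}\,(L^t)^N\!\big[(1-\chi)\,a(X+\cdot)\,b(X+\cdot)\big]\,dY\,dZ.
\]
The central technical estimate is then
\[
\big|(L^t)^N\!\big[(1-\chi)\,a(X+\cdot)\,b(X+\cdot)\big](Y,Z)\big|\le C_1^{N+1}(N!)^s\,\frac{h^N}{|(Y,Z)|^N},\qquad |(Y,Z)|\ge 1.
\]
Writing $L^t=h(M+V\cdot\nabla_{Y,Z})$ with $M$ and the components of $V$ rational in $(Y,Z)$ of negative homogeneity, Cauchy estimates yield $|\partial^\mu M|,|\partial^\mu V|\le C^{|\mu|+1}\,|\mu|!\,|(Y,Z)|^{-|\mu|-\mathcal{O}(1)}$. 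Organizing the expansion of $(L^t)^N$ by the number of derivatives landing on the amplitude versus on earlier coefficient factors, together with the Gevrey Leibniz rule applied to $(1-\chi)\,a(X+Y)\,b(X+Z)$, produces the bound via a Fa\`a di Bruno--type combinatorial count.

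Granting this estimate and integrating over $|(Y,Z)|\ge 1$ for $N>2m$ (so that $\int|(Y,Z)|^{-N}\,dY\,dZ$ is convergent and bounded uniformly in $N$), I obtain $|r_\chi(X)|\le C_2\,C_3^N(N!)^s\,h^{N-m}$. By Stirling, the expression $C_3^N(N!)^s h^N$ is minimized at $N\asymp h^{-1/s}$, where it equals $\exp(-s\,h^{-1/s}/C_4)$ up to constants, and the polynomial prefactor $h^{-m}$ is absorbed at the cost of enlarging $C_4$, proving the reduced pointwise estimate and hence (\ref{comp.18.3}). The main obstacle I anticipate is the pointwise bound on $(L^t)^N$: while classical in principle (cf.\ \cite{La88}), it requires verifying that the combinatorial explosion in the iterated composition of $L^t$ does not exceed $C^N(N!)^s$. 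The bilinearity of $\varphi$ is essential here, since it makes the coefficients of $L^t$ rational and thus real-analytic on $\{(Y,Z)\neq 0\}$, so that their derivatives obey exact factorial Cauchy bounds and contribute no Gevrey-$s$ growth of their own --- the only source of $(N!)^s$ is the Gevrey amplitude, which is exactly the scale we can afford after the optimization in $N$.
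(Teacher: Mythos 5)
Your proposal is correct in outline, but it takes a genuinely different route from the paper --- in fact you have essentially reconstructed the classical argument of Lascar \cite{La88} (non-stationary phase, iterated $N\asymp h^{-1/s}$ times), which is precisely the proof the paper says it is trying to replace: \emph{``the argument in~\cite{La88} proceeds by repeated partial integrations and suitable quasinorm estimates, and our purpose here is to provide an alternative approach\ldots\ making use of the method of contour deformations.''} The paper's proof instead reduces to the scalar model $r_\chi(x)=h^{-N/2}\int e^{iq(y)/h}(1-\chi(y))\,a(x+y)\,dy$ with $q$ a non-degenerate real quadratic form, takes Gevrey almost holomorphic extensions $\widetilde a$, $\widetilde\chi$ furnished by Section~\ref{ahol}, deforms the contour to $y\mapsto y+i\theta_0\,q'(y)/|q'(y)|$ with $\theta_0\asymp h^{1-1/s}$, and reads the decay $\exp(-h^{-1/s}/\mathcal{O}(1))$ off from ${\rm Im}\,q(z)\asymp\theta_0|y|$ on the deformed contour together with the $\overline\partial$-bound (\ref{ahol.3}) on the Stokes region. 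The contour deformation delivers the result in a few lines once the almost-holomorphic machinery is available; your route is self-contained but shifts the entire burden onto the estimate $|(L^t)^N[(1-\chi)ab]|\le C^{N+1}(N!)^s\,h^N\,|(Y,Z)|^{-N}$, which you correctly flag as the main obstacle and do not actually prove. That estimate is true (it is what \cite{La88} establishes), and your scheme $\binom{N}{k}(k!)^s(N-k)!\le N!(k!)^{s-1}$ is the right mechanism for preventing a degradation to $(N!)^{2s}$, but the full bookkeeping for the iterated composition $(L^t)^N$ --- nested product and chain rules applied simultaneously to the rational coefficients of $L^t$ and to the Gevrey amplitude --- is a nontrivial induction that would have to be carried out for the proposal to be complete. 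Your preliminary Leibniz reduction to $\alpha=0$ and your Stirling optimization of $C_3^N(N!)^s h^N$ at $N\asymp h^{-1/s}$ are both sound.
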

\begin{proof}
We shall prove the following more general statement, implying (\ref{comp.18.3}): let $q(x)$ be a real valued non-degenerate quadratic form on $\real^N$, let $a\in {\cal G}^s_b(\real^N)$, for some $s>1$, and let $\chi \in {\cal G}^s_0(\real^N)$ be such that $\chi(x) = 1$ for $\abs{x}\leq 1$, ${\rm supp}\, \chi \subset B_{{\bf R}^N}(0,2)$. Setting
\begeq
\label{comp.18.4}
r_{\chi}(x) = h^{-N/2} \int e^{iq(y)/h} (1-\chi(y)) a(x+y)\, dy,
\endeq
we shall prove that $r_{\chi}$ enjoys the same estimates as in (\ref{comp.18.3}). To this end, let $\widetilde{a}\in {\cal G}^s_b(\comp^N)$ be an almost holomorphic extension of $a$ such that ${\rm supp}\, \widetilde{a} \subset \real^N + iB_{{\bf R}^N}(0,C)$, for some $C>0$, and let $\widetilde{\chi}\in {\cal G}^s_0(\comp^N)$ be an almost holomorphic extension of $\chi$, with ${\rm supp}\, \widetilde{\chi}$ close to that of $\chi$. We shall replace the integration in (\ref{comp.18.4}) along $\real^N$ by the integration along the contour
\begeq
\label{comp.18.5}
\Gamma_{\theta_0}:\quad \real^N \ni y\mapsto y + i\theta_0 \frac{q'(y)}{\abs{q'(y)}}\in \comp^N,\quad \abs{y}\geq \frac{1}{2},
\endeq
for some $\theta_0 > 0$ small enough, where we notice that along $\Gamma_{\theta_0}$, we have
\begeq
\label{comp.18.6}
{\rm Im}\, q(z) = \theta_0 \abs{q'(y)} \asymp \theta_0 \abs{y}, \quad z\in \Gamma_{\theta_0},
\endeq
since $q$ is non-degenerate. Introducing also the damping factor $e^{-\varepsilon y^2/2}$, $\varepsilon >0$, in (\ref{comp.18.4}), we get by an application of Stokes formula,
\begin{multline}
\label{comp.19}
\int_{{\bf R}^N} e^{iq(y)/h} e^{-\varepsilon y^2/2} (1-\chi(y)) a(x+y)\, dy = \int_{\Gamma_{\theta_0}} e^{iq(z)/h} e^{-\varepsilon z^2/2} (1-\widetilde{\chi}(z)) \widetilde{a}(x+z)\, dz \\
+ \int\!\!\!\int_{G_{[0,\theta_0]}} e^{iq(z)/h} e^{-\varepsilon z^2/2} \overline{\partial} \left((1-\widetilde{\chi}(z)) \widetilde{a}(x+z)\right)\wedge\, dz.
\end{multline}
Here $G_{[0,\theta_0]}\subset \comp^N$ is the $(n+1)$--dimensional contour given by
$$
G_{[0,\theta_0]} = \bigcup_{\theta\in[0,\theta_0]}\Gamma_\theta,
$$
with $\Gamma_{\theta}$ defined similarly to (\ref{comp.18.5}). Taking $\displaystyle \theta_0 = \frac{1}{C_0} h^{1-\frac{1}{s}}$, for some constant $C_0 > 0$ large enough, we obtain in view of (\ref{comp.18.6}),
\begeq
\label{comp.20}
\int_{\Gamma_{\theta_0}} e^{iq(z)/h} e^{-\varepsilon z^2/2} (1-\widetilde{\chi}(z)) \widetilde{a}(x+z)\, dz = {\cal O}(1)\, \exp\left(-\frac{1}{{\cal O}(1)}h^{-\frac{1}{s}}\right),
\endeq
uniformly in $\varepsilon >0$. Furthermore, using (\ref{ahol.3}) and (\ref{comp.18.6}), we see that for some $C_1>0$, the second term in the right hand side of (\ref{comp.19}) is of the form
\begin{multline}
\label{comp.21}
{\cal O}(1) \int_0^{\theta_0} d\theta \int_{\abs{y}\geq 1/2} e^{-\frac{\theta \abs{y}}{{\cal O}(1)h}} \exp\left(-\frac{1}{C_1} \theta^{-\frac{1}{s-1}}\right)\, dy \\ \leq {\cal O}(1) \int_0^{\theta_0} \theta^{-N} \exp\left(-\frac{1}{C_1} \theta^{-\frac{1}{s-1}}\right)\, d\theta
\leq {\cal O}(1)\,\exp\left(-\frac{1}{{\cal O}(1)}h^{-\frac{1}{s}}\right),
\end{multline}
uniformly in $\varepsilon >0$. Here we have also used the fact that the function
$$
t\rightarrow \exp\left( -\frac{1}{2C_1} t^{-\frac{1}{s-1}}\right)
$$
is increasing on $[0,\theta_0]$. Combining (\ref{comp.19}), (\ref{comp.20}), and (\ref{comp.21}) and letting $\varepsilon \rightarrow 0^+$, we get that the oscillatory integral in (\ref{comp.18.4}) satisfies,
\begeq
\label{comp.22}
r_{\chi}(x) = {\cal O}(1)\,\exp\left(-\frac{1}{\mathcal{O}(1)}h^{-\frac{1}{s}}\right),\quad x\in \real^N.
\endeq
Considering the derivatives of $r_{\chi}$ in (\ref{comp.18.4}) and using the fact that for each $\alpha \in \nat^N$, the function $\partial^{\alpha}_x \widetilde{a} \in {\cal G}^s_b(\comp^N)$ is an almost holomorphic extension of $\partial^{\alpha}a\in {\cal G}^s_b(\real^N)$, we obtain, arguing as above,
\begeq
\label{comp.23}
\abs{\partial^\alpha r_\chi(x)} \leq C^{1+|\alpha|}(\alpha!)^s\,\exp\left(-\frac{1}{\mathcal{O}(1)}h^{-\frac{1}{s}}\right),\quad x\in \real^N.
\endeq
The proof is complete.
\end{proof}

\bigskip
\noindent
Continuing to use the notation in the proof of Proposition \ref{Gevrey_remainder}, let us also consider
\begeq
\label{comp.24}
\ell_{\chi}(x) = h^{-N/2} \int e^{iq(y)/h} \chi(y) a(x+y)\, dy,
\endeq
where we write $\displaystyle q(y) = \frac{1}{2} Ay\cdot y$. Letting
$$
C_A = \frac{(2\pi)^{N/2} e^{i\frac{\pi}{4} \textrm{sgn}(A)}}{\abs{{\rm det}\,A}^{1/2}},
$$
where $\textrm{sgn}(A)$ is the signature of $A$, and
$$
\ell_{\chi,K}(x) = \ell_{\chi}(x) - C_A \sum_{k=0}^{K-1} \frac{h^k}{k!} \left(\frac{1}{2i} A^{-1}D\cdot D\right)^k a(x),\quad K = 1,2,\ldots,
$$
we conclude by quadratic stationary phase and the fact that $\chi_0\in {\cal G}^s_0(\real^N)$, $a\in {\cal G}^s_b(\real^N)$, that there exists $C>0$ such that for all $\alpha \in \nat^N$, $K\in \nat$, we have
\begeq
\label{comp.25}
\abs{\partial^{\alpha}_x \ell_{\chi,K}(x)} \leq C^{1+K+\abs{\alpha}} (K!)^{2s-1} (\alpha!)^s h^K.
\endeq
In particular, we have $\ell_{\chi}\in {\cal G}^s_b(\real^N)$, and here once again we encounter the phenomenon of the loss of Gevrey smoothness in stationary phase expansions, see also (\ref{psg.11.01}), (\ref{psg.11.02}).

\medskip
\noindent
The discussion above gives, in particular, an alternative proof of the following result due to~\cite{La88},~\cite{LaLa97}.
\begin{coro}
Let $a\in {\cal G}^s_b(\Lambda_{\Phi_0})$, $b\in {\cal G}^s_b(\Lambda_{\Phi_0})$, for some $s>1$. Then the symbol $c = a\#b$, defined in {\rm (\ref{comp.17})}, satisfies $c\in {\cal G}^s_b(\Lambda_{\Phi_0})$.
\end{coro}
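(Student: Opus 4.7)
The plan is to use the splitting $c = \ell_\chi + r_\chi$ together with the estimates already established in this subsection. By the metaplectic invariance of the Weyl calculus mentioned above, it suffices to prove the corollary for symbols on $\real^m$, $m = 2n$, in which case $c = a \# b$ is given by the oscillatory integral (\ref{comp.18.1}). Fix a cutoff $\chi\in {\cal G}_0^s(\real^{2m})$ with $\chi = 1$ near $0$ and ${\rm supp}\,\chi \subset B(0,2)$, and write $c = \ell_\chi + r_\chi$ where $r_\chi$ is defined in (\ref{comp.18.2}) and $\ell_\chi$ is the complementary compactly supported piece.

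For the tail $r_\chi$, Proposition \ref{Gevrey_remainder}, applied with $N = 2m$ and the non-degenerate quadratic form $q(Y,Z) = -2\sigma(Y,Z)$ on $\real^{2m}$, gives
$$
\abs{\partial^\alpha r_\chi(X)} \leq C^{1+\abs{\alpha}}(\alpha!)^s \exp\left(-\frac{1}{\mathcal{O}(1)} h^{-\frac{1}{s}}\right),
$$
uniformly in $X\in \real^m$ and $h\in (0,1]$, so $r_\chi \in {\cal G}_b^s(\real^m)$ with exponential decay to spare.

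For $\ell_\chi$, differentiation in $X$ transfers onto $a$ and $b$ via Leibniz, giving $\partial_X^\alpha \ell_\chi(X) = \sum_{\beta+\gamma=\alpha} \binom{\alpha}{\beta} \ell_\chi^{\beta,\gamma}(X)$, where $\ell_\chi^{\beta,\gamma}$ is defined analogously to $\ell_\chi$ with $a, b$ replaced by $\partial^\beta a, \partial^\gamma b$. Applying the stationary phase expansion (\ref{comp.25}) with $K = 1$ to each such integral, and using that $\partial^\beta a, \partial^\gamma b$ are uniformly Gevrey with constants bounded by $AC^{\abs{\beta}}(\beta!)^s$ and $AC^{\abs{\gamma}}(\gamma!)^s$, yields
$$
\abs{\ell_\chi^{\beta,\gamma}(X)} \leq C_1^{1+\abs{\beta}+\abs{\gamma}}(\beta!)^s(\gamma!)^s,
$$
uniformly in $X$ and $h \in (0,1]$. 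Combined with the elementary multi-index estimate $\sum_{\beta+\gamma=\alpha}\binom{\alpha}{\beta}(\beta!)^s(\gamma!)^s \leq 2^{\abs{\alpha}}(\alpha!)^s$ (which follows from $\beta!\gamma! \leq \alpha!$ and $\prod_j(\alpha_j+1) \leq 2^{\abs{\alpha}}$), this gives $\abs{\partial_X^\alpha \ell_\chi(X)} \leq C_2^{1+\abs{\alpha}}(\alpha!)^s$, and therefore $\ell_\chi \in {\cal G}_b^s(\real^m)$. Adding the bounds on $\ell_\chi$ and $r_\chi$ shows $c \in {\cal G}_b^s(\real^m)$, and metaplectic invariance transports this back to $\Lambda_{\Phi_0}$. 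The main substance of the argument is already in Proposition \ref{Gevrey_remainder} and the stationary phase estimate (\ref{comp.25}); what actually needs verifying is only that both of these apply uniformly in the parameter $X$ when the scalar symbol is replaced by the parameter-dependent family $(X,Y,Z) \mapsto a(X+Y)b(X+Z)$, which is transparent since the constants appearing in those statements depend only on Gevrey seminorms of the integrand, and these are controlled independently of $X$.
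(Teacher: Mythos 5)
Your proof is correct and follows the same route as the paper: the split $c = \ell_\chi + r_\chi$, Proposition \ref{Gevrey_remainder} for the tail, and the stationary-phase bound \eqref{comp.25} for the compactly supported piece. The one place where you go beyond what the paper writes is the Leibniz expansion $\partial_X^\alpha \ell_\chi = \sum_{\beta+\gamma=\alpha}\binom{\alpha}{\beta}\ell_\chi^{\beta,\gamma}$ together with the bound $\sum_{\beta+\gamma=\alpha}\binom{\alpha}{\beta}(\beta!)^s(\gamma!)^s\le 2^{|\alpha|}(\alpha!)^s$; the paper's \eqref{comp.25} is stated for the single-symbol model integral \eqref{comp.24}, and the transfer to the two-symbol composition via this binomial argument is left implicit there, so your spelling it out is a genuine, useful completion of the argument rather than a digression.
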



\begin{thebibliography}{35}

\bibitem{BoKr67} L.~Boutet de Monvel and P.~Kr\'ee, \emph{Pseudo-differential operators and Gevrey classes},
Ann. Inst. Fourier {\bf 17} (1967), 295--323.

\bibitem{Ca61} L.~Carleson, \emph{On universal moment problems}, Math. Scand. {\bf 9} (1961), 197--206.

\bibitem{DeSjZw04} N. Dencker, J. Sj\"ostrand, and M. Zworski, {\it Pseudo-\-spectra of se\-mi\-clas\-si\-cal
(pseudo)\-diffe\-ren\-tial ope\-rators}, Comm. Pure Appl. Math. {\bf 57} (2004), 384-415.

\bibitem{DiSj99} M.~Dimassi and J.~Sj\"ostrand, \emph{Spectral asymptotics in the semi-classical limit}, Cambridge University Press, Cambridge, 1999.

\bibitem{Dy70} E. M.~Dyn'kin, \emph{An operator calculus based on the Cauchy-Green formula}, Zap. Nauchn. Sem. Leningrad. Otdel. Mat. Inst. Steklov. (LOMI) {\bf 30} (1972), 33–-39.

\bibitem{Dy93} E. M.~Dyn'kin, \emph{The pseudoanalytic extension}, J. Anal. Math. {\bf 60} (1993), 45--70.

\bibitem{FuNeRaSc19} S.~F\"urd\"os, D. N.~Nenning, A.~Rainer, and G.~Schindl, \emph{Almost analytic extensions of ultradifferentiable functions
with applications to microlocal analysis}, \url{https://arxiv.org/abs/1904.07634}.

\bibitem{GaZw} J. Galkowski and M. Zworski, {\it Outgoing solutions via Gevrey-{\rm 2} properties}, preprint, 2020, \url{https://arxiv.org/abs/2004.07868}.

\bibitem{GrSj} A. Grigis and J. Sj\"ostrand, {\it Microlocal analysis for differential operators}, Cambridge University Press, 1994.

\bibitem{GeJe20} Y.~Guedes Bonthonneau and M.~J\'ez\'equel, \emph{FBI transform in Gevrey classes and Anosov flows},
\url{https://arxiv.org/abs/2001.03610}.

\bibitem{HeSjSt} F. H\'erau, J. Sj\"ostrand, and C. Stolk, {\it Semiclassical analysis for the Kramers--Fokker--Planck equation},
Comm. Partial Differential Equations {\bf 30} (2005), 689--760.

\bibitem{HiLaSjZe20} M. Hitrik, R. Lascar, J. Sj\"ostrand, and M. Zerzeri, {\it Semiclassical Gevrey operators and magnetic translations}, preprint, 2020.

\bibitem{HiSj15} M.~Hitrik and J.~Sj\"ostrand, \emph{Two minicourses on analytic microlocal analysis},
"Algebraic and Analytic Microlocal Analysis", Springer Proceedings in Mathematics and Statistics {\bf 269} (2018), 483--540, \url{https://arxiv.org/pdf/1508.00649}.

\bibitem{Ho85} L. H\"{o}rmander, {\it The analysis of linear partial differential operators} (vol. I--IV), Springer, Berlin, 1985.


\bibitem{Horm_conv} L. H\"ormander, {\it Notions of convexity}, Birkh\"auser, 1994.

\bibitem{La88} B.~Lascar, \emph{Propagation des singularit\'es Gevrey pour des op\'erateurs hyperboliques}, Amer. J. Math. {\bf 110} (1988), 413–-449.

\bibitem{LaLa91} B.~Lascar and R.~Lascar, \emph{Propagation des singularit\'es Gevrey pour la diffraction},
Comm. Partial Differential Equations {\bf 16} (1991), 547--584.

\bibitem{LaLa97} B.~Lascar and R.~Lascar, FBI--\emph{transforms in Gevrey classes}, J. Anal. Math. {\bf 72} (1997), 105--125.

\bibitem{La77} R.~Lascar, \emph{Distributions int\'egrales de Fourier et classes de Denjoy-Carleman. Applications}, C. R. Acad. Sci. Paris, S\'er. A {\bf 284} (1977), 485--488.

\bibitem{Le84} G.~Lebeau, \emph{Regularit\'e Gevrey {\rm 3} pour la diffraction},
Comm. Partial Differential Equations {\bf 9} (1984), 1437--1494.

\bibitem{Le10} N.~Lerner, \emph{Metrics on the Phase Space and Non-Selfadjoint Pseudodifferential Operators},
Pseudo-Differential Operators. Theory and Applications, 3, Birkh\"{a}user Verlag, Basel.

\bibitem{M_book} A. Martinez, {\it An introduction to semiclassical and microlocal analysis}, Universitext, Springer-Verlag, New York, 2002.

\bibitem{Ma71} J. N.~Mather, \emph{On Nirenberg's proof of Malgrange's preparation theorem}, Proc. Liverpool Singularities-Symp. I, Lecture Notes in Mathematics {\bf 192} (1971), 116--120.

\bibitem{MeSj75} A.~Melin and J.~Sj\"ostrand, \emph{Fourier integral operators with complex-valued phase functions},
Fourier integral operators and partial differential equations, Lecture Notes in Mathematics {\bf 459} (1975), 120--223.

\bibitem{MeSj01} A.~Melin and J.~Sj\"ostrand, {\it Bohr-Sommerfeld quantization condition for non-selfadjoint operators
in dimension {\rm 2}}, Ast\'erisque {\bf 284} (2003), 181--244.

\bibitem{Rodino} L. Rodino, {\it Linear partial differential operators in Gevrey spaces}, World Scientific, 1993.

\bibitem{Rouleux} M. Rouleux, {\it Absence of resonances for semiclassical Schr\"odinger operators with Gevrey coefficients}, Hokkaido Math. J. {\bf 30} (2001), 475--517.

\bibitem{Se63} I. E.~Segal, \emph{Transforms for operators and symplectic automorphisms over a locally compact abelian group},  Math. Scand. {\bf 13} (1963), 31--43.

\bibitem{Sj80_1} J.~Sj\"ostrand, \emph{Propagation of analytic singularities for second order Dirichlet problems},
Comm. Partial Differential Equations {\bf 5} (1980), 41--93.

\bibitem{Sj80_2} J.~Sj\"ostrand, {\emph{Propagation of analytic singularities for second order Dirichlet problems} {\textrm{II}}},
Comm. Partial Differential Equations  {\bf 5} (1980), 187--207.

\bibitem{Sj81} J.~Sj\"ostrand, {\emph{Propagation of analytic singularities for second order Dirichlet problems} {\textrm III}},
Comm. Partial Differ. Equations {\bf 6} (1981), 499--567.

\bibitem{Sj82} J.~Sj\"ostrand, \emph{Singularit\'es analytiques microlocales},
Ast\'erisque {\bf 95} (1982).

\bibitem{Sj90} J.~Sj\"ostrand, {\it Asymptotique des r\'esonances pour des obstacles}, S\'eminaire Bourbaki,
Astérisque {\bf 189-190} (1990), 259–-283.

\bibitem{Sj96} J.~Sj\"ostrand, \emph{Function spaces associated to global} {\textrm{I}}-\emph{Lagrangian manifolds}, Structure of solutions of differential equations, Katata/Kyoto, 1995, World Scientific, 1996.

\bibitem{Zw12} M.~Zworski, \emph{Semiclassical analysis}, Graduate Studies in Mathematics 138, AMS, 2012.



\end{thebibliography}
\end{document}